\colorlet{green}{black!30!green} 
\tikzstyle directed=[postaction={decorate,decoration={markings,
    mark=at position #1 with {\arrow{>}}}}]
\tikzstyle rdirected=[postaction={decorate,decoration={markings,
    mark=at position #1 with {\arrow{<}}}}]
\tikzset{anchorbase/.style={baseline={([yshift=-0.5ex]current bounding box.center)}}}
\theoremstyle{theorem}
\newtheorem{theorem}{Theorem}
\newtheorem{corollary}[theorem]{Corollary}
\newtheorem{lemma}[theorem]{Lemma}
\newtheorem{proposition}[theorem]{Proposition}
\theoremstyle{definition}
\newtheorem{definition}[theorem]{Definition}
\newtheorem{example}[theorem]{Example}
\newtheorem{remark}[theorem]{Remark}
\newcommand{\slnn}[1]{\mf{sl}_{#1}}
\newcommand{\glnn}[1]{\mf{gl}_{#1}}
\newcommand{\diagrep}{\phi} %% \AWeb to "representation theory"
\newcommand{\pTr}{\mathrm{pTr}}
\newcommand{\TL}{\cat{TL}}
\newcommand{\ATL}{\cat{A}\cat{TL}}\newcommand{\essATL}{\cat{A}\cat{TL}^{\mathrm{ess}}}
\newcommand{\sessATL}{\cat{A}\cat{TL}_\mathrm{s}^{\mathrm{ess}}}
\newcommand{\wrap}{D}
\newcommand{\wrapi}{D^{-1}}
\newcommand{\reprs}{\mathrm{K_0(Rep(\slnn{2}))}}
\newcommand{\rep}{\mathrm{Rep(\slnn{2})}}
\newcommand{\reph}{\mathrm{Rep(\h)}}
\newcommand{\bVn}{\Lambda}
\newcommand{\sym}{\mathrm{Sym}}
\newcommand{\Kar}{\operatorname{Kar}}
\newcommand{\Cu}{\mathrm{Cu}}
\newcommand{\Ca}{\mathrm{Ca}}
\newcommand{\cat}[1]{\ensuremath{\mbox{\bfseries {\upshape {#1}}}}}
\DeclareMathOperator{\Hom}{Hom}
\DeclareMathOperator{\Sym}{Sym}
\newcommand{\id}{{\rm id}}
\def\mf{\mathfrak}
\def\comm#1{}%
\def\emph#1{{\sl #1\/}}
\let\phi=\varphi
\let\theta=\vartheta
\let\epsilon=\varepsilon
\def\C{{\mathbbm C}}
\def\N{{\mathbbm N}}
\def\R{{\mathbbm R}}
\def\Z{{\mathbbm Z}}
\def\Q{{\mathbbm Q}}
\def\k{{\mathbbm k}}
\def\h{{\mathfrak h}}
\def\Link{\mathrm{Link}}
\def\Sk{\mathrm{Sk}}
\newcommand{\Ss}{\mathbb{S}}
\def\A{\mathrm{A}}
\def\Kh{\mathrm{Kh}}
\def\T{\mathrm{T}}
\def\la{\langle}
\def\ra{\rangle}
\begin{document}
%
% ==============================================================================

\title
{
Extremal weight projectors
}

\author{Hoel Queffelec}
\address{CNRS and IMAG, Universit\'e de Montpellier, France}
\email{hoel.queffelec@umontpellier.fr}

 \author{Paul Wedrich}
 \address{Imperial College London, United Kingdom}
 \email{p.wedrich@gmail.com}

\thanks{Funding: H.~Q. was partially supported by the ANR Quantact and a grant from the Universit\'e de Montpellier. P.~W. was supported by a Fortbildungsstipendium at the Max Planck Institute for Mathematics and by the Leverhulme Research Grant RP2013-K-017 to Dorothy Buck, Imperial College London.}

\begin{abstract} We introduce a quotient of the affine Temperley-Lieb category that encodes all weight-preserving linear maps between finite-dimensional $\slnn{2}$-representations. We study the diagrammatic idempotents that correspond to projections onto extremal weight spaces and find that they  satisfy similar properties as Jones-Wenzl projectors, and that they categorify the Chebyshev polynomials of the first kind. This gives a categorification of the Kauffman bracket skein algebra of the annulus, which is well adapted to the task of categorifying the multiplication on the Kauffman bracket skein module of the torus. 
\end{abstract}

\maketitle

\section{Introduction}
The Lie algebra $\slnn{2}$ and its universal enveloping algebra $U(\slnn{2})$ are ubiquitous in representation theory, topology and physics, and their study typically starts with the description of the monoidal category $\rep$ of finite-dimensional $\slnn{2}$-representations. The \emph{representation ring} of $\slnn{2}$, that is, the Grothendieck group $\reprs$ with multiplication inherited from the tensor product of representations, is isomorphic to the polynomial ring $\Z[X]$ generated by the class of the vector representation $X=[V]$.

Two bases of $\Z[X]$, triangularly equivalent (over $\Q$) to the monomial basis, play a prominent role, especially in quantum topological applications:
\begin{itemize}
\item the Chebyshev polynomials of the first kind $L_m\in \Z[X]$, recursively defined as $L_0=2$, $L_1=X$ and $L_{m}= X L_{m-1} - L_{m-2}$ for $m\geq 3$;
\item the Chebyshev polynomials of the second kind $J_m\in \Z[X]$, defined to satisfy the same recurrence, but with initial conditions $J_0=1$, $J_1=X$.
\end{itemize}

\noindent
The basis elements $J_m$ appear in $\reprs$ as the classes of the simple representations $\Sym^m(V)$. 
Since tensor powers of $V$ can be decomposed into simple representations, this implies that the basis change in $\Z[X]$ from the monomial basis to $\{J_m\}$ has positive coefficients.

It is easy to see that the basis change from $\{J_m\}$ to $\{L_m\}$ also has positive coefficients, and in fact $J_m$ = $L_m + J_{m-2}$ for $m\geq 2$. The element $L_m$ of the representation ring, thus, corresponds to the formal difference of the classes of $\Sym^m(V)$ and  $\Sym^{m-2}(V)$, which we interpret as the class of the \emph{extremal weight space} of $\Sym^m(V)$, that is, the direct sum of the highest and lowest weight spaces. Another indication that the basis $\{L_m\}$ is more fundamental than the basis $\{ J_m\}$ is given by comparing the multiplication rules:
\begin{equation} \label{eq:mult}
 J_m J_n = J_{m+n} + J_{m+n-2}+\cdots +J_{|m-n|} \quad\text{and}\quad L_m L_n = L_{m+n} + L_{|m-n|}.
\end{equation}

The representation category $\rep$ admits a diagrammatic generators-and-relations description as the Karoubi envelope of the famous Temperley-Lieb category $\TL$, which encodes intertwiners between tensor powers of the vector representation, see Section~\ref{sec:TL}. In this description, the simple representations $\Sym^m(V)$ are obtained through the Jones-Wenzl projectors $P_m\in \TL$, which correspond to the idempotent intertwiners:
\[
V^{\otimes m} \twoheadrightarrow \Sym^m(V)\hookrightarrow V^{\otimes m}.
\]
In summary, the Karoubi envelope of the Temperley-Lieb category gives a diagrammatic categorification of the polynomial ring and the Jones-Wenzl projectors $P_m$ categorify the polynomials $J_m$. The purpose of this paper is to give a description of analogs of the Jones-Wenzl projectors for the Chebyshev polynomials of the first kind $L_m$, which we call \emph{extremal weight projectors}.

\subsection{Main results}
We define a diagrammatic extension $\essATL$ of the Temperley-Lieb category, which utilizes diagrams in an annulus. More precisely, the monoidal category $\essATL$ is the quotient of the affine Temperley-Lieb category $\ATL$ by an ideal generated by the relation that an essential circle in the annulus is zero, see Definition~\ref{def:ATL} for details. 
In this category, we give a very simple recursive diagrammatic definition of morphisms $T_m$ for $m\geq 0$, which is reminiscent of, but simpler than the recursive definition of the Jones-Wenzl projectors. Our main results are:
\begin{itemize}
\item  $\essATL$ describes the category of weight-preserving linear maps between $\slnn{2}$-representations, or in other words, the representation category $\reph\supset\rep$ of the Cartan $\h\subset \slnn{2}$; 
\item the elements $T_m$ are idempotents, corresponding to projections onto extremal weight spaces;
\item in the Karoubi envelope of $\essATL$, there are isomorphisms $T_m\otimes T_n\cong T_{m+n}\oplus T_{|m-n|}$, which categorify the multiplication rule in \eqref{eq:mult}.
 \end{itemize}
The simple definition of these idempotents makes them natural objects to study. Additionally, we expect that they will play a central role in categorifying skein algebras, as we will explain next.

\subsection{Skein modules and skein algebras} Let $M$ be an orientable 3-manifold and $\k:=\Z[q^{\pm 1/2}]$. Following Przytycki \cite{Prz1} and Turaev \cite{Tur}, the Kauffman bracket skein module $\Sk_q(M)$ of $M$ is constructed by forming the free $\k$-module spanned by isotopy classes of framed links in $M$ and taking the quotient by the Kauffman bracket skein relations \cite{KauffBracket} supported in 3-balls in $M$:
\begin{equation}
\begin{tikzpicture}[anchorbase, scale=.5]
\draw [very thick] (1,0) to [out=90,in=270] (0,1.5);
\draw [white,line width=.15cm] (0,0) to [out=90,in=270] (1,1.5);
\draw [very thick] (0,0) to [out=90,in=270] (1,1.5);
\end{tikzpicture}
\;= \; q^{1/2}\;
\begin{tikzpicture}[anchorbase, scale=.5]
\draw [very thick] (0,0) -- (0,1.5);
\draw [very thick] (1,0) -- (1,1.5);
\end{tikzpicture}
\;
+ \; q^{-1/2}\;
\begin{tikzpicture}[anchorbase, scale=.5]
\draw [very thick] (0,0) to [out=90,in=180] (.5,.5) to [out=0,in=90] (1,0);
\draw [very thick] (0,1.5) to [out=270,in=180] (.5,1) to [out=0,in=270] (1,1.5);
\end{tikzpicture} 
\quad,\quad
\begin{tikzpicture}[anchorbase, scale=.5]
\draw [very thick] (0,0) to [out=90,in=180] (.5,.5) to [out=0,in=90] (1,0)to [out=270,in=0] (.5,-.5) to [out=180,in=270] (0,0);
\end{tikzpicture} 
\;= \; -(q +q^{-1})
\end{equation}
For manifolds with boundary, one may take tangles with endpoints in a specified discrete set of boundary points as spanning set for the skein module. Thus, the categories $\TL$ and $\essATL$ can be seen as being built from skein modules of thickened squares or thickened annuli with prescribed boundary points, at $q=1$. The multiplication is induced from gluing these 3-manifolds along the vertical part of their boundary.

We write $\Sk_q(\Sigma)$ for the skein modules of thickened surfaces $\Sigma \times [0,1]$. These become \emph{skein algebras} with the multiplication operations obtained by stacking two copies of the thickened surface i.e. gluing along the horizontal part of their boundary. The skein algebra $\Sk_q(\R^3)$ is free of rank one over $\k$ and the class of a framed link is given by its Kauffman bracket evaluation, which recovers the Jones polynomial.  
The skein algebra $\Sk_q(\A)$ of the annulus without boundary points is isomorphic to the polynomial ring $\k[X]$ generated by the core curve of the annulus, and thus to the representation ring $\reprs\otimes \k$. Elements of the skein of the annulus act on framed knots in skein modules by cabling operations. For example, cabling knots in $\Sk_q(\R^3)$ by the element corresponding to $J_m\in \Z[X]$ produces the $m$-th colored Jones polynomial as evaluation.

\subsection{Positive bases for skein algebras and categorification}
The Kauffman bracket skein module of the torus $\T=\Ss^1\times \Ss^1$ has a basis given by collections of parallel non-separating simple closed curves on $\T$. Thus, as a $\k$-module, it is isomorphic to a direct sum of countably many copies of $\Sk_q(\A)$, one for each slope $m/n$ on the torus. However, Frohman and Gelca \cite{FG} proved that the skein multiplication has a particular nice form in terms of the basis assembled from the Chebyshev polynomials $L_d$ in the copies $\Sk_q(\A)$. In fact, for a pair $(r,s)\in \Z^2$, they define $(r,s)_T$ to be the basis element $L_{\gcd(r,s)}$ of slope $r/s$ in $\Sk_q(\T)$ and show:
\begin{equation}\label{eq:FG}
(m,n)_T*(r,s)_T = q^{(m s-r n)/2} (m+r,n+s)_T+ q^{(r n-m s)/2} (m-r,n-s)_T 
\end{equation}

This formula, taken at roots of unity, plays a central role in Bonahon and Wong's \emph{miraculous cancellations} \cite{BW1} and the related work of Le \cite{Le16b}, in connection to cluster algebra. Also in relation with cluster algebras, for arbitrary closed oriented surfaces $\Sigma$, Thurston showed that the elements $L_d\in \Sk(\A)$, embedded along simple closed curves in $\Sigma$, give a basis for the skein algebra $\Sk_{q=1}(\Sigma)$ with positive structure constants, \cite{Thu}. He also conjectured that the same holds for generic $q$ (see also \cite{Le16a}) and asked whether this positivity phenomenon might be the shadow of a monoidal categorification of $\Sk_q(\Sigma)$. We propose a definition.
\begin{definition}\label{def:moncat} A \emph{monoidal categorification} of $\Sk_q(\Sigma)$ consists of a graded, monoidal, additive category $\mathcal{C}(\Sigma)$ and an isomorphism of unital $\k$-algebras $\psi\colon\Sk_q(\Sigma)\cong K_0(\mathcal{C}(\Sigma))$. 
\end{definition}
Thurston's positivity conjecture suggests that one should look for monoidal categorifications of $\Sk_q(\Sigma)$ with simple objects categorifying the cablings of simple closed curves by $L_d$. This is the main motivation for our construction of the highest weight projectors. Indeed, in Section~\ref{sec:decat} we describe a subcategory of $\essATL$, whose Karoubi envelope gives a monoidal categorification of $\Sk_q(\A)$ (after introducing an extra grading), and the idempotents $T_d$ categorify the elements $L_d$.

A monoidal categorification of a skein algebra becomes even more interesting if one can evaluate links in it. To make this precise, let $\Link(\Sigma)$ denote the monoidal category whose objects are given by embedded links $L$ in $\Sigma\times [0,1]$ and with morphisms given by compact, oriented bordisms in $\Sigma\times [0,1]\times [0,1]$ between links, modulo isotopy. The composition is the usual gluing of bordisms and the tensor product is induced by stacking links (and bordisms) along the thickening direction and rescaling: $[0,2]\to [0,1]$. We now define a categorification of a skein algebra to consist of a monoidal categorification as in Definition~\ref{def:moncat} together with a Khovanov-type link homology functor (\cite{Kh1}) that is compatible with the monoidal structure.

\begin{definition} A \emph{categorification of the skein algebra} $\Sk_q(\Sigma)$ consists of  a monoidal categorification as in Definition~\ref{def:moncat}, together with a monoidal functor $\Kh\colon \Link(\Sigma) \to \mathcal{C}(\Sigma)$, such that the class of each link $L\subset \Sigma \times [0,1]$ in $\Sk_q(\Sigma)$ is sent to $K_0(\Kh(L))$ under $\psi$.
\end{definition}

Recall that Bar-Natan's construction of Khovanov homology admits an extension to the case of links in $\Sigma\times [0,1]$, see \cite{BN2}[Section 11.6]. The target category $\mathcal{C}(\Sigma)$ of this construction is the homotopy category of chain complexes with chain groups generated by $1$-manifolds embedded in $\Sigma$ and differentials generated by cobordisms in $\Sigma \times [0,1]$, modulo certain relations. This gives a categorification of the skein module $\Sk_{-q}(\Sigma)$ and a proposal for a (projective) Khovanov functor $\Kh\colon \Link(\Sigma) \to \mathcal{C}(\Sigma)$ (see also \cite{APS} for a related construction). However, it is not known whether $\mathcal{C}(\Sigma)$ can be equipped with a monoidal structure making $\Kh$ monoidal.

In \cite{QW} we will examine this problem in the case of the torus $\Sigma=\T$. In this case, Bar-Natan's cobordism category contains what we call \emph{slope subcategories} --- subcategories of $1$-manifolds and cobordisms which are $\Ss^1$-equivariant along a fixed slope. Taking the quotient by the $\Ss^1$-action, it is not hard to see that the degree zero part of this subcategory is closely related to the affine Temperley-Lieb category $\ATL$. The additional relation in $\essATL$ is realized by setting the endo-cobordism of $\emptyset$ that is given by the boundary parallel torus equal to zero. The homotopy category over a corresponding quotient cobordism category, thus, contains idempotents which categorify the Frohman-Gelca basis elements $(m,n)_T$. Moreover, it follows from this translation and our results in Section~\ref{sec:parprod} that these idempotents satisfy a categorified version of the Frohman-Gelca formula \eqref{eq:FG} in the case when they lie in the same slope. In \cite{QW} we treat the general case of tensoring such categorified Frohman-Gelca basis elements, although we have to work in a slightly different setting to avoid functoriality problems. We also speculate that a similar construction should be possible for other closed surfaces $\Sigma$.

 \vspace{.5cm}

 \noindent {\bf Acknowledgements:} We would like to thank 
Nils Carqueville, 
Charlie Frohman,
Eugene Gorsky,
Mikhail Khovanov,
Scott Morrison,
Jake Rasmussen,
Philippe Roche,
Heather Russell
and Catharina Stroppel
for interesting discussions and helpful comments.

%%#######################################################################################

\section{Affine Temperley-Lieb and extremal weight projectors}
% #######################################################################################

% ---------------------------------------------------------------------------------------
\subsection{Temperley-Lieb categories}
\label{sec:TL}
% ---------------------------------------------------------------------------------------
We start by recalling classical results about the Temperley-Lieb category and its affine version at $q=1$. 

\begin{definition} Let $\TL(m,n)$ denote the free $\C$-module spanned by planar matchings in a box with $m$ boundary points on the bottom and $n$ boundary points on the top (and no closed components). The Temperley-Lieb category $\TL$ is the $\C$-linear strict monoidal category with objects $n\in \N$, morphism spaces $\TL(n,m)$ and multiplication $\TL(m,n)\times \TL(l,m) \to \TL(l,n)$ is defined on the bases of matchings by composing planar tangles and reducing each resulting closed components to $-2$. The monoidal structure is defined on objects by $m\otimes n=m+n$ and on morphisms by placing diagrams side by side.
\end{definition}

\begin{example} We give examples for the composition and the tensor product of two planar matchings. 
\[
\begin{tikzpicture} [scale=.4,anchorbase]
\draw (-1.5,0) rectangle (3.5,1.5);
\draw [very thick] (0,0) to [out=90,in=180] (.5,.5) to [out=0,in=90] (1,0);
\draw [very thick] (0,1.5) to [out=270,in=180] (.5,1) to [out=0,in=270] (1,1.5);
\draw [very thick] (-1,0) to [out=90,in=270] (2,1.5);
\draw [very thick] (2,0) to [out=90,in=180] (2.5,.5) to [out=0,in=90] (3,0);
\end{tikzpicture}
\cdot \begin{tikzpicture} [scale=.4,anchorbase]
\draw (-1.5,0) rectangle (3.5,1.5);
\draw [very thick] (-1,1.5) to [out=270,in=180] (.5,.5) to [out=0,in=270] (2,1.5);
\draw [very thick] (0,1.5) to [out=270,in=180] (.5,1) to [out=0,in=270] (1,1.5);
\draw [very thick] (1.5,0) to [out=90,in=270] (3,1.5);
\end{tikzpicture} 
=
-2\; \begin{tikzpicture} [scale=.4,anchorbase]
\draw [very thick] (0,1.5) to [out=270,in=180] (.5,1) to [out=0,in=270] (1,1.5);
\draw (-.5,0) rectangle (2.5,1.5);
\draw [very thick] (1.5,0) to [out=90,in=270] (2,1.5);
\end{tikzpicture}
\quad, \quad
\begin{tikzpicture} [scale=.4,anchorbase]
\draw (-1.5,0) rectangle (3.5,1.5);
\draw [very thick] (0,0) to [out=90,in=180] (.5,.5) to [out=0,in=90] (1,0);
\draw [very thick] (0,1.5) to [out=270,in=180] (.5,1) to [out=0,in=270] (1,1.5);
\draw [very thick] (-1,0) to [out=90,in=270] (2,1.5);
\draw [very thick] (2,0) to [out=90,in=180] (2.5,.5) to [out=0,in=90] (3,0);
\end{tikzpicture}
\otimes \begin{tikzpicture} [scale=.4,anchorbase]
\draw (-1.5,0) rectangle (3.5,1.5);
\draw [very thick] (-1,1.5) to [out=270,in=180] (.5,.5) to [out=0,in=270] (2,1.5);
\draw [very thick] (0,1.5) to [out=270,in=180] (.5,1) to [out=0,in=270] (1,1.5);
\draw [very thick] (1.5,0) to [out=90,in=270] (3,1.5);
\end{tikzpicture} 
=\begin{tikzpicture} [scale=.4,anchorbase]
\draw (-1.5,0) rectangle (7,1.5);
\draw [very thick] (0,0) to [out=90,in=180] (.5,.5) to [out=0,in=90] (1,0);
\draw [very thick] (0,1.5) to [out=270,in=180] (.5,1) to [out=0,in=270] (1,1.5);
\draw [very thick] (-1,0) to [out=90,in=270] (2,1.5);
\draw [very thick] (2,0) to [out=90,in=180] (2.5,.5) to [out=0,in=90] (3,0);
\draw [very thick] (2.5,1.5) to [out=270,in=180] (4,.5) to [out=0,in=270] (5.5,1.5);
\draw [very thick] (3.5,1.5) to [out=270,in=180] (4,1) to [out=0,in=270] (4.5,1.5);
\draw [very thick] (5,0) to [out=90,in=270] (6.5,1.5);
\end{tikzpicture}
\]
Stacking the two displayed matchings produces a closed component, which is reduced to $-2$.
\end{example} 

We write $\TL_n:=\TL(n,n)$ for the Temperley-Lieb algebra on $n$ strands. It has a presentation with generators $U_i$ for $1\leq i \leq n-1$, which correspond to cap-cup matchings between the $i$-th and $i+1$-st
 strand. A complete set of relations is: 
\begin{itemize}
\item $U_iU_{i+1}U_i=U_i$ for $1\leq i\leq n-2$ and $U_iU_{i-1}U_i=U_i$ for $2\leq i\leq n-1$,
 \item $[U_i,U_j]=0$ for $|i-j|>1$ and 
\item $U_i^2 = -2U_i$.
\end{itemize} 

Gluing such boxes along their left and right boundaries, we get Temperley-Lieb diagrams in an annulus $A=\Ss^1\times [0,1]$ together with additional information: we remember the corners of the box as base points on the boundary circles and the glued edge as an arc connecting them.
\[
\begin{tikzpicture} [scale=.4,anchorbase]
\draw (-1.5,0) rectangle (3.5,1.5);
\draw [very thick] (-1,1.5) to [out=270,in=180] (.5,.5) to [out=0,in=270] (2,1.5);
\draw [very thick] (0,1.5) to [out=270,in=180] (.5,1) to [out=0,in=270] (1,1.5);
\draw [very thick] (1.5,0) to [out=90,in=270] (3,1.5);
\end{tikzpicture} 
\;\;\rightarrow\;\;  
\begin{tikzpicture}[anchorbase, scale=.3]
%% Inner boundary circle
\draw (0,0) circle (1);
%% Outer boundary circle
\draw (0,0) circle (3);
\draw [very thick] (2.4,1.8) to [out=210,in=0] (0,1.3) to [out=180,in=330] (-2.4,1.8);
\draw [very thick] (1,2.8) to [out=250,in=0] (0,2.3) to [out=180,in=290] (-1,2.8);
\draw [very thick] (.8,.6) to [out=30,in=180](3,0);
% boundary markings
\node at (0,-1) {$*$};
\node at (0,-3) {$*$};
\draw [dashed] (0,-1) to (0,-3);
\end{tikzpicture}
\]

\begin{definition} \label{def:ATL}
The affine Temperley-Lieb category, denoted $\ATL$, is the $\C$-linear category with objects $n\in \N$ and with morphism spaces $\ATL(m,n)$ spanned by crossingless tangles in the annulus $A$ with $m$ and $n$ endpoints on the inside and outside boundary respectively, without inessential (i.e. homologically trivial) circles. The multiplication is defined on basis elements by stacking an annular diagram inside another one such that endpoints and basepoints match up, and then reducing inessential circles to $-2$. We write $\ATL_n:=\ATL(n,n)$ for the affine Temperley-Lieb algebra on $n$ strands.
\end{definition}

Note that the affine Temperley-Lieb category can be obtained from the usual one by simply adding a new type of invertible morphism:
\[
\wrap 
\;\;=\;\;
\begin{tikzpicture}[anchorbase, scale=.3]
%% Inner boundary circle
\draw (0,0) circle (1);
%% Outer boundary circle
\draw (0,0) circle (3);
%% wrapping around
\draw [very thick] (-.8,.6) .. controls (-2.6,1.2) and (-2,-2) .. (0,-2) .. controls (2,-2) and (1.2,.9) ..  (2.4,1.8);
\draw [very thick] (-.4,.9) .. controls (-.8,1.8) and (-.9,1.2) .. (-1.8,2.4);
\node at (0,2) {$\dots$};
\draw [very thick] (.8,.6) .. controls (1.2,.9) and (.75,1.3) .. (1.5,2.6);
% boundary markings
\node at (0,-1) {$*$};
\node at (0,-3) {$*$};
\draw [dashed] (0,-1) to (0,-3);
\end{tikzpicture}
\quad,\quad
\wrapi
\;\;=\;\;
\begin{tikzpicture}[anchorbase, scale=.3]
%% Inner boundary circle
\draw (0,0) circle (1);
%% Outer boundary circle
\draw (0,0) circle (3);
%% wrapping around
\draw [very thick] (.8,.6) .. controls (2.6,1.2) and (2,-2) .. (0,-2) .. controls (-2,-2) and (-1.2,.9) ..  (-2.4,1.8);
\draw [very thick] (.4,.9) .. controls (.8,1.8) and (.9,1.2) .. (1.8,2.4);
\node at (0,2) {$\dots$};
\draw [very thick] (-.8,.6) .. controls (-1.2,.9) and (-.75,1.3) .. (-1.5,2.6);
% boundary markings
\node at (0,-1) {$*$};
\node at (0,-3) {$*$};
\draw [dashed] (0,-1) to (0,-3);
\end{tikzpicture}
\]
We write $U_0=\wrap U_{1} \wrapi = \wrapi U_{n-1} \wrap$ for the cap-cup between the first and last strand. A complete set of relations for $\ATL_n$ in terms of the generators $U_i$ and the invertible generator $D$ is given by:
\begin{itemize}
\item $U_iU_{i+1}U_i=U_i= U_iU_{i-1}U_i$,
\item $[U_i,U_j]=0$ for $i\neq j\pm 1$ ,
\item $U_i^2 = -2U_i$,
\item $U_i D = D U_{i+1}$
\end{itemize} 
where all indices are taken modulo $n$.

\begin{remark} \label{rem:TL} There exists a vast literature about the Temperley-Lieb category, Temperley-Lieb algebras and their affine versions. Our version of $\ATL_n$ agrees with the one defined by Graham and Lehrer \cite{GL} at $q=1$. Jones and Reznikoff \cite{JR} also study this version, although they use shaded diagrams which encode a parity. Our $\ATL_n$ should not be confused with the Temperley-Lieb quotients of the Iwahori-Hecke algebras of affine type A, which are studied e.g. in \cite{FGr,EG, Gre}, and which appear as the proper subalgebras of our $\ATL_n$ generated by the $U_i$, see also Section~\ref{sec:decat}.  Also note that $\ATL$ encodes crossingless tangles in the annulus up to isotopies which are the identity on the boundary. Algebras of Temperley-Lieb diagrams where isotopies need not be the identity on the boundary have e.g. been studied in \cite{Jones2}.
\end{remark}

% ---------------------------------------------------------------------------------------
\subsection{Link with $\slnn{2}$-representation theory}
% ---------------------------------------------------------------------------------------

It is well-known that the Temperley-Lieb category $\TL$ describes a category of $\slnn{2}$-representations and their intertwiners. This can be traced back to \cite{RTW}. Indeed, there exists a fully faithful $\C$-linear monoidal functor $\phi$ from $\TL$ to $\rep$, which sends $n$ to the $n$-th tensor power of the vector representation $V=\C\langle v_+,v_-\rangle$. In the following we describe a version of the functor $\phi$ using the representation theory of the associative algebra $U(\slnn{2})$.

Recall that $U(\slnn{2})$ is the $\C$-algebra generated by $E,F,H$ subject to the following relations:
\begin{align}
HE-EH=2 E,\quad HF-FH=-2F,\quad  
EF-FE=H.
\end{align}
It is a Hopf algebra with coproduct and antipode determined by $\Delta(x)=1\otimes x +x\otimes 1$ and $S(x)=-x$ for $x\in \slnn{2}$.

Denote $V=\C\langle v_-,v_+\rangle$ the vector representation, with
\begin{gather*}
Ev_{+}=0, \quad Fv_+=v_-,\quad H v_+=v_+,\\
Ev_{-}=v_+, \quad Fv_-=0,\quad H v_-=-v_-,
\end{gather*}
The dual $V^*=\C\langle v_-^*, v_+^*\rangle$ is isomorphic to $V$ via the map that send $v_-^*\mapsto v_+$ and $v_+^*\mapsto -v_-$.

The $\C$-linear monoidal functor $\phi$ is determined by its images on cap and cup morphisms in $\TL$. It associates to those the negatives of the natural evaluation and co-evaluation maps for duals, composed with the isomorphism $V \cong V^*$:
\begin{align*}
\begin{tikzpicture}[anchorbase, scale=.5]
\draw [very thick] (1,2) to (1,1.7)to [out=270,in=0] (.5,1.2) to [out=180,in=270] (0,1.7) to (0,2);
\end{tikzpicture}
\quad \xrightarrow{\phi} \quad
\begin{cases}
\C \to  V \otimes V \\
1\mapsto   v_{+}\otimes v_- - v_{-}\otimes v_+
\end{cases}
\quad ,\quad
\begin{tikzpicture}[anchorbase, scale=.5]
\draw [very thick] (1,0) to (1,0.3)to [out=90,in=0] (.5,.8) to [out=180,in=90] (0,.3) to (0,0);
\end{tikzpicture}
\quad \xrightarrow{\phi} \quad
\begin{cases}
V\otimes V \to \C \\
v_{\mp}\otimes v_{\pm} \mapsto \pm 1 \\
v_\pm \otimes v_\pm \mapsto 0 
\end{cases}
\end{align*}

It is an easy exercise to check that these intertwiners of $U(\slnn{2})$-representations satisfy the relations of the Temperley-Lieb category and so $\phi$ is a well-defined monoidal functor. The exchange of two tensor factors $s\colon V\otimes V, v_{\epsilon_1}\otimes v_{\epsilon_2} \mapsto v_{\epsilon_2}\otimes v_{\epsilon_1}$ is the image under $\phi$ of the following morphism in $\TL$: 

\begin{equation}\label{eq:braid}
\begin{tikzpicture}[anchorbase, scale=.5]
\draw [very thick] (1,0) to [out=90,in=270] (0,1.5);
\draw [very thick] (0,0) to [out=90,in=270] (1,1.5);
\end{tikzpicture}
\quad := \quad
\begin{tikzpicture}[anchorbase, scale=.5]
\draw [very thick] (0,0) -- (0,1.5);
\draw [very thick] (1,0) -- (1,1.5);
\end{tikzpicture}
\;
+ \;
\begin{tikzpicture}[anchorbase, scale=.5]
\draw [very thick] (0,0) to [out=90,in=180] (.5,.5) to [out=0,in=90] (1,0);
\draw [very thick] (0,1.5) to [out=270,in=180] (.5,1) to [out=0,in=270] (1,1.5);
\end{tikzpicture}
\end{equation}

This suggestive notation encodes useful relations in $\TL$ and is compatible with planar isotopies:
\[  \begin{tikzpicture}[anchorbase,scale=.7]
    \draw [very thick] (1,0) to [out=90,in=0] (.5,.5) to [out=180,in=90] (0,0);
    \draw [very thick] (.5,0) to [out=45,in=315] (.5,1);
  \end{tikzpicture}  
  \quad=\quad 
  \begin{tikzpicture}[anchorbase,scale=.7]
    \draw [very thick] (1,0) to [out=90,in=0] (.5,.5) to [out=180,in=90] (0,0);
    \draw [very thick] (.5,0) to [out=135,in=225] (.5,1);
  \end{tikzpicture}
  \quad, \quad
  \begin{tikzpicture}[anchorbase,scale=.7]
    \draw [very thick] (1,1) to [out=270,in=0] (.5,.5) to [out=180,in=270] (0,1);
    \draw [very thick] (.5,0) to [out=45,in=315] (.5,1);
  \end{tikzpicture}  
  \quad=\quad 
  \begin{tikzpicture}[anchorbase,scale=.7]
    \draw [very thick] (1,1) to [out=270,in=0] (.5,.5) to [out=180,in=270] (0,1);
    \draw [very thick] (.5,0) to [out=135,in=225] (.5,1);
  \end{tikzpicture}   \] 
  
\begin{lemma} \label{lem:Reid} The following analogs of Reidemeister moves hold in $\TL$:
  \[
  \begin{tikzpicture}[anchorbase,scale=.7]
    \draw [very thick] (0,0) to [out=90,in=180] (.75,1.25) to [out=0,in=90] (1.25,.75) to [out=-90,in=0] (.75,.25) to [out=180,in=-90] (0,1.5); 
  \end{tikzpicture}
  \quad=\;\;-\;
  \begin{tikzpicture}[anchorbase,scale=.7]
    \draw [very thick] (0,0) -- (0,1.5); 
  \end{tikzpicture}
  \quad,\quad
  \begin{tikzpicture}[anchorbase,scale=.5]
    \draw [very thick] (0,0) to [out=90,in=-90] (1,1) to [out=90,in=-90] (0,2);
    \draw [very thick] (1,0) to [out=90,in=-90] (0,1) to [out=90,in=-90] (1,2);
  \end{tikzpicture}
\quad=\quad
  \begin{tikzpicture}[anchorbase,scale=.5]
    \draw [very thick] (0,0) -- (0,2);
    \draw [very thick] (1,0) -- (1,2);
  \end{tikzpicture}
  \quad,\quad
  \begin{tikzpicture}[anchorbase,scale=.5]
    \draw [very thick] (0,0) to [out=90,in=-90] (.7,.7) to [out=90,in=-90] (1.4,1.4) -- (1.4,2.1);
    \draw [very thick] (.7,0) to [out=90,in=-90] (0,.7) -- (0,1.4) to [out=90,in=-90] (.7,2.1);
    \draw [very thick] (1.4,0) -- (1.4,.7) to [out=90,in=-90] (.7,1.4) to [out=90,in=-90] (0,2.1);
  \end{tikzpicture}
\quad=\quad
  \begin{tikzpicture}[anchorbase,scale=.5]
    \draw [very thick] (1.4,0) to [out=90,in=-90] (.7,.7) to [out=90,in=-90] (0,1.4) -- (0,2.1);
    \draw [very thick] (.7,0) to [out=90,in=-90] (1.4,.7) -- (1.4,1.4) to [out=90,in=-90] (.7,2.1);
    \draw [very thick] (0,0) -- (0,.7) to [out=90,in=-90] (.7,1.4) to [out=90,in=-90] (1.4,2.1);
  \end{tikzpicture}
  \]
\end{lemma}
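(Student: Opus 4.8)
The plan is to prove all three identities by the same two-step recipe: first resolve every crossing via its definition \eqref{eq:braid}, writing a crossing as $\id + U$, and then reduce the resulting linear combination of planar matchings using only the defining relations of $\TL$, namely the evaluation of an inessential circle as $-2$, the relation $U_i^2 = -2U_i$, and the relation $U_iU_{i\pm 1}U_i = U_i$. Since each move is an equality of morphisms between small objects ($1$ or $2$ or $3$ strands), it suffices to verify the equality after this expansion. For the second move (Reidemeister II) both crossings lie on the same pair of strands, so the left-hand side is $s^2$ in $\TL_2$; expanding $s = \id + U_1$ gives $s^2 = \id + 2U_1 + U_1^2 = \id + 2U_1 - 2U_1 = \id$, which is exactly the pair of parallel strands on the right.

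For the third move (Reidemeister III) I would work in $\TL_3$ with $s_1 = \id + U_1$ and $s_2 = \id + U_2$. Expanding $s_1 s_2 s_1 = (\id+U_1)(\id+U_2)(\id+U_1)$ and repeatedly applying $U_1^2 = -2U_1$ and $U_1U_2U_1 = U_1$ collapses the product to the symmetric expression $\id + U_1 + U_2 + U_1U_2 + U_2U_1$. The same expansion of $s_2 s_1 s_2$, using $U_2^2 = -2U_2$ and $U_2U_1U_2 = U_2$, yields the identical expression, so the two sides agree. Both of these moves are thus pure computations in the Temperley--Lieb algebra and present no real difficulty.

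The first move (Reidemeister I) is the only one that requires a genuinely topological observation rather than pure algebra, and I expect this to be the main (albeit minor) obstacle. Here the crossing is a self-crossing of a single strand, so I would apply \eqref{eq:braid} locally in a disk around the crossing point, which is legitimate because the crossing is defined locally. Following the connectivity of the two smoothings through the kink, one resolution joins the incoming and outgoing ends of the strand directly while the remaining arc closes up into an inessential circle, contributing $-2\,\id$, and the other reconnects the strand so that, after passing through the kink, it straightens into a single strand from bottom to top, contributing $+\id$. Since both resolutions occur with coefficient $+1$ in \eqref{eq:braid}, the curl equals $-2\,\id + \id = -\id$, as claimed. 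The only care needed is to check that exactly one smoothing produces the closed loop and the other straightens out, which one verifies directly by tracing the strand.
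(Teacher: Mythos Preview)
Your proof is correct. The paper does not supply a proof of this lemma at all; it is stated and then immediately used, evidently regarded as a routine check. Your approach --- expanding each crossing as $\id + U$ and simplifying with the Temperley--Lieb relations $U_i^2=-2U_i$ and $U_iU_{i\pm1}U_i=U_i$ for the algebraic moves, and tracing the two smoothings through the kink for the Reidemeister~I move --- is exactly the standard verification one would expect, and all three computations are carried out correctly.
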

This implies that $\TL$ is a symmetric monoidal category and that $\phi$ is a symmetric monoidal functor. 

\begin{remark} 
The tensor product $V^{\otimes m}$ decomposes into a direct sum of irreducible representations, amongst which $\sym^m(V)$ appears with multiplicity $1$ and has highest weight. This decomposition can be translated in terms of idempotents in $\TL_n$ and the idempotent corresponding to the copy of $\sym^m(V)$ is the famous Jones-Wenzl projector $P_m$ \cite{Jones,Wen} with diagrammatic representation recursively defined by $P_1=\id_1$ and:
\begin{equation}  \label{eq:JWrec}
 \begin{tikzpicture}[anchorbase, scale=.3]
\fill[black,opacity=.2] (0,1) rectangle (3,3);
\draw[thick] (0,1) rectangle (3,3);
%% web edges - bottom
\draw [very thick] (.5,0) to (.5,1);
\draw [thick, dotted] (.7,0.5) to (1.3,.5);
\draw [very thick] (1.5,0) to (1.5,1);
\draw [very thick] (2.5,0) to (2.5,1);
%% web edges - top
\draw [very thick] (.5,3) to (.5,4);
\draw [thick, dotted] (.7,3.5) to (1.3,3.5);
\draw [very thick] (1.5,3) to (1.5,4);
\draw [very thick] (2.5,3) to (2.5,4);
% J_{m+1}
\node at (1.5,1.9) {$P_{m+1}$};
\end{tikzpicture}
\;\;=\;\;
 \begin{tikzpicture}[anchorbase, scale=.3]
\fill[black,opacity=.2] (0,1) rectangle (2,3);
\draw[thick] (0,1) rectangle (2,3);
%% web edges - bottom
\draw [very thick] (.5,0) to (.5,1);
\draw [thick, dotted] (.7,0.5) to (1.3,.5);
\draw [very thick] (1.5,0) to (1.5,1);
\draw [very thick] (2.5,0) to (2.5,4);
%% web edges - top
\draw [very thick] (.5,3) to (.5,4);
\draw [thick, dotted] (.7,3.5) to (1.3,3.5);
\draw [very thick] (1.5,3) to (1.5,4);
% J_{m+1}
\node at (1,1.9) {$P_{m}$};
\end{tikzpicture}
\;\;+\;\frac{m}{m+1}\;
 \begin{tikzpicture}[anchorbase, scale=.3]
\fill[black,opacity=.2] (0,.5) rectangle (2,1.5);
\draw[thick] (0,.5) rectangle (2,1.5);
\fill[black,opacity=.2] (0,2.5) rectangle (2,3.5);
\draw[thick] (0,2.5) rectangle (2,3.5);
% web edges - bottom
\draw [very thick] (.5,0) to (.5,.5);
\draw [thick, dotted] (.7,0.25) to (1.3,.25);
\draw [very thick] (1.5,0) to (1.5,.5);
% web edges - middle
\draw [very thick] (2.5,0) to (2.5,1.5)to [out=90,in=90] (1.5,1.5);
\draw [very thick] (1.5,2.5) to [out=270,in=270] (2.5,2.5) to (2.5,4); 
\draw [very thick] (.5,1.5) to (.5,2.5);
\draw [thick, dotted] (.7,2) to (1.3,2);
%% web edges - top
\draw [very thick] (.5,3.5) to (.5,4);
\draw [thick, dotted] (.7,3.75) to (1.3,3.75);
\draw [very thick] (1.5,3.5) to (1.5,4);
% J_{m+1}
\node at (1,.95) {\tiny$P_{m}$};
\node at (1,2.95) {\tiny$P_{m}$};
\end{tikzpicture}
\end{equation}
\end{remark}

Dual to the decomposition into simple representations, one can decompose $V^{\otimes m}$ as the direct sum of its weight spaces. Of course, the projections onto weight spaces are usually not $U(\slnn{2})$-equivariant, and then cannot be realized as the images of idempotents in the Temperley-Lieb category $\TL$.

We will thus extend the functor $\phi$ from $\TL$ to $\ATL$ by sending the morphisms $D\in \ATL_m$ to certain vector space endomorphisms of $V^{\otimes m}$ which respect the weight space decomposition, but which break the $U(\slnn{2})$-action\footnote{Setting $\phi(D)$ to be a certain $U(\slnn{2})$-intertwiner can be used to construct evaluation representations of certain affine Lie algebras. We stress that this is not what we have in mind.}. Since we want $\ATL$ to inherit the monoidal structure of $\TL$, the $\phi$-image of $D\in \ATL_1$ actually determines $\phi(D)$ for all $D\in \ATL_m$. The requirement that $\phi(D)$ is invertible and respects the weight space decomposition of $V$ implies that $\phi(D)(v_\pm)= c_\pm v_\pm$ for some $c_\pm\in \C^*$. The proof of Lemma~\ref{lem:welldef} shows that we need $c_+=c_-^{-1}$ in order to obtain a well-defined functor that respects all isotopies of diagrams. The reason for the following choice will become clear in Proposition~\ref{prop:nff}.
  
\begin{definition}
  Let $V\otimes W$ be the image under $\phi$ of the domain of $D$ and $W\otimes V$ its co-domain. Then we define $\phi(D)$ to be the linear map determined by $v_{\pm}\otimes w\mapsto i^{\pm 1} w\otimes v_{\pm}
  $
  for $v_{\pm}\in V$ and any $w\in W$. Furthermore we set $\phi(\wrapi)=\phi(\wrap)^{-1}$.
\end{definition}

Let $\h$ denote the Cartan subalgebra of diagonal matrices in $\slnn{2}$ and consider $U(\h)=\langle H \rangle \subset U(\slnn{2})$. We denote by $\reph$ the category of finite-dimensional $U(\h)$-representations of integral weights, i.e. vector spaces with a $\Z$-grading. Note that the inclusion $\h\hookrightarrow \slnn{2}$ induces a restriction functor $\rep\to\reph$ and that $\phi(\wrap)$ and $\phi(\wrapi)$ are morphisms in $\reph$.

\begin{lemma}\label{lem:welldef} The functor $\phi\colon \ATL \to \reph$ is well-defined. 
\end{lemma}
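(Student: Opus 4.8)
The plan is to exploit that $\phi$ is already a well-defined symmetric monoidal functor on $\TL$ (as recorded after Lemma~\ref{lem:Reid}) and that $\ATL$ is obtained from $\TL$ by adjoining only the invertible generator $\wrap$, with $\phi(\wrap)$ and $\phi(\wrapi)=\phi(\wrap)^{-1}$ fixed in the definition preceding the lemma. A monoidal functor is determined by its values on generators, so well-definedness amounts to checking that $\phi$ respects every isotopy of annular diagrams. Those isotopies that involve only cups, caps and planar moves are respected because $\phi|_\TL$ is well-defined, so I would concentrate on the moves involving $\wrap$: its invertibility, the relation $U_i\wrap=\wrap U_{i+1}$, and its compatibility with turn-backs. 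Throughout I write $c_\pm:=i^{\pm1}$ for the eigenvalues of the single-strand wrap $\phi(\wrap)\colon V\to V,\ v_\pm\mapsto c_\pm v_\pm$.

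First the routine points. The assignment $\phi(\wrap)\colon v_\pm\otimes w\mapsto c_\pm\, w\otimes v_\pm$ is a linear isomorphism with inverse $w\otimes v_\pm\mapsto c_\pm^{-1}\, v_\pm\otimes w$, so $\phi(\wrapi)=\phi(\wrap)^{-1}$ is consistent and $\wrap\wrapi=\wrapi\wrap=\id$ is respected; since $\phi(\wrap)$ only scales weight vectors, it preserves the $\Z$-grading and is a morphism in $\reph$. Next I would verify $U_i\wrap=\wrap U_{i+1}$ (indices modulo $n$). Writing $\phi(\wrap)$ on $V^{\otimes n}$ as the cyclic shift $v_{a_1}\otimes\cdots\otimes v_{a_n}\mapsto c_{a_1}\,v_{a_2}\otimes\cdots\otimes v_{a_n}\otimes v_{a_1}$, this is immediate for interior indices, since the scalar attached to the wrapped strand is inert with respect to a cap--cup on the remaining strands; in the wrap-around case one uses that a cap--cup is supported on tensor factors of opposite weight, where the product of the two wrapping scalars equals $c_+c_-$ on either side and hence cancels. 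Notably, this holds for \emph{any} invertible weight-preserving choice of $c_\pm$, so the endomorphism-algebra relations of $\ATL_n$ impose no constraint.

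The crux --- the step that singles out the scalars --- is the compatibility of $\wrap$ with a turn-back, i.e.\ the isotopy that slides a wrap from one leg of a cup across its apex to the other leg, reversing the sense of winding. Writing $\eta\colon\C\to V\otimes V$ for the $\phi$-image of the cup, this reads
\[
(\phi(\wrap)\otimes\id_V)\circ\eta \;=\; (\id_V\otimes\phi(\wrap)^{-1})\circ\eta ,
\]
together with the dual statement for the cap. Evaluating the left-hand side on $1$ gives $c_+\,v_+\otimes v_- - c_-\,v_-\otimes v_+$, while the right-hand side gives $c_-^{-1}\,v_+\otimes v_- - c_+^{-1}\,v_-\otimes v_+$; comparing coefficients forces exactly $c_+=c_-^{-1}$, which the choice $c_\pm=i^{\pm1}$ satisfies, and the computation with $\epsilon$ yields the same condition.

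I expect this last compatibility to be the main obstacle, both because it is the relation \emph{not} visible in the algebras $\ATL_n$ --- it genuinely couples $\wrap$ to the object-changing cup and cap --- and because it is precisely here that an arbitrary choice $\phi(\wrap)(v_\pm)=c_\pm v_\pm$ fails unless $c_+c_-=1$. Once it is in place, every generating isotopy is accounted for and $\phi$ descends to a well-defined functor $\ATL\to\reph$. As a sanity check I would note that $\phi$ sends an essential circle to the categorical trace $-(c_++c_-)=0$, which is consistent with $\phi$ later factoring through the quotient $\essATL$.
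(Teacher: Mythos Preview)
Your argument follows the same route as the paper's: reduce to the relations involving $\wrap$, dispose of invertibility and the interior intertwining $U_i\wrap=\wrap U_{i+1}$ as routine, and isolate the turn-back across the dashed arc as the one relation that singles out $c_+c_-=1$. The paper does exactly this, checking that the two decompositions of the ``back cap'' (via $\wrap_m$ versus $\wrapi_m$) agree on $v_\pm\otimes w\otimes v_\mp$.

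One point of imprecision worth tightening: you phrase the turn-back as $(\phi(\wrap)\otimes\id_V)\circ\eta=(\id_V\otimes\phi(\wrap)^{-1})\circ\eta$, using the one-strand wrap $\wrap_1$ tensored with an identity. But in $\ATL$ the morphism $\wrap_1\otimes\id_1$ is not a generator --- computing it via the monoidal structure introduces a crossing and hence a sum of diagrams --- so as written this is an equation in $\reph$, not a generating isotopy relation of $\ATL$ that $\phi$ must respect. The paper avoids this by working directly with the global generator $\wrap_m$: the same annular cap (first to last strand, around the back) is expressed once using $\wrap_m$ and once using $\wrapi_m$, and one verifies both compositions yield $v_\pm\otimes w\otimes v_\mp\mapsto i\,w$. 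Your computation is checking the same constraint $c_+c_-=1$ and is correct; it would simply be cleaner to state the relation in terms of the actual generators $\wrap_m$, $\wrapi_m$ so that it is manifestly one of the defining isotopies.
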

\begin{proof} All morphisms in $\ATL$ are compositions of caps or cups between adjacent strands, as well as the morphisms $\wrap$ and $\wrapi$. Any relation satisfied by compositions of these generating morphisms is either supported in $\TL\subset \ATL$ (and is thus respected by $\phi$) or involves some generators $\wrap$ and $\wrapi$. Since $\phi$ maps $\wrap$ and $\wrapi$ to inverse isomorphisms, it suffices to check that $\phi$ respects the relation that $\wrap$ intertwines adjacent caps and cups. The only interesting case corresponds to isotoping caps or cups across the dashed arc. For cups we need the relation:

\[\phi
\left(
\begin{tikzpicture}[anchorbase, scale=.3]
%% Inner boundary circle
\draw (0,0) circle (1);
%% Outer boundary circle
\draw (0,0) circle (3);
%% wrapping around
\draw [very thick] (-.8,.6) to [out=150,in=90] (-1.75,0) to[out=270,in=180] (0,-2) to [out=0,in=270] (2,0) to [out=90,in
=225] (2.25,.9) to [out=45,in=315] (2.25,1.35)to[out=135,in=45] (1.8,1.35) to (.8,.6) ;
\draw [very thick] (-.4,.9)to  (-1.2,2.7);
\node at (0,2) {$\dots$};
\draw [very thick] (.4,.9)  to (1.2,2.7);
% boundary markings
\node at (0,-1) {$*$};
\node at (0,-3) {$*$};
\draw [dashed] (0,-1) to (0,-3);
\end{tikzpicture}
\right)
\;=\;
\phi\left(
\begin{tikzpicture}[anchorbase, scale=.3]
%% Inner boundary circle
\draw (0,0) circle (1);
%% Outer boundary circle
\draw (0,0) circle (3);
%% wrapping around
\draw [very thick] (.8,.6) to [out=30,in=90] (1.75,0) to[out=270,in=0] (0,-2) to [out=180,in=270] (-2,0) to [out=90,in
=315] (-2.25,.9) to [out=135,in=225] (-2.25,1.35)to[out=45,in=135] (-1.8,1.35) to (-.8,.6) ;
\draw [very thick] (-.4,.9)to  (-1.2,2.7);
\node at (0,2) {$\dots$};
\draw [very thick] (.4,.9)  to (1.2,2.7);
% boundary markings
\node at (0,-1) {$*$};
\node at (0,-3) {$*$};
\draw [dashed] (0,-1) to (0,-3);
\end{tikzpicture}
\right)
\]
Note that both sides annihilate vectors of the form $v_{\pm}\otimes w \otimes v_{\pm}$. On the other hand, the action on $v_{\pm}\otimes w \otimes v_{\mp}$ of the left-hand side and right-hand side, respectively, are given by: 
\begin{align*}
v_{\pm}\otimes w \otimes v_{\mp} &\mapsto  i^{\pm 1} w \otimes v_{\mp} \otimes v_{\pm} \mapsto  i w \quad \text{and} \quad
v_{\pm}\otimes w \otimes v_{\mp} \mapsto  i^{\pm 1} v_{\mp}\otimes v_{\pm}\otimes w \mapsto  i w 
\end{align*}
The isotopy relation for cups is checked analogously. 
\end{proof}

Let $\otimes\colon \ATL\times \ATL\to \ATL$ denote the bi-functor given on objects by $(m,n)\mapsto m+n$ and on morphisms by superposing a pair of affine Temperley-Lieb diagrams $(W_1,W_2)$:

\[
 \begin{tikzpicture}[anchorbase, scale=.25]
%% stuff inside -- inner
\draw[thick] (0,0) circle (2.5);
\fill[black,opacity=.2] (0,0) circle (2.5);
\draw[thick,fill=white] (0,0) circle (1.5);
%% stuff inside -- outer
\draw[thick] (0,0) circle (4.5);
\fill[black,opacity=.2] (4.5,0) arc (0:360:4.5) -- (3.5,0) arc (360:0:3.5);
\draw [thick] (0,0) circle (3.5);
%% Inner boundary circle
\draw (0,0) circle (1);
%% Outer boundary circle
\draw (0,0) circle (5);
%% left side
\draw[dotted] (-2.29,2.29) to [out=225,in=90] (-3.25,0) to [out=270,in=135] (-2.29,-2.29);
\draw[dotted] (-3.4,3.4) to [out=225,in=90] (-4.75,0) to [out=270,in=135] (-3.4,-3.4);
\draw[dotted] (-1.93,1.93) to [out=225,in=90] (-2.75,0) to [out=270,in=135] (-1.93,-1.93);
\draw[dotted] (-0.88,0.88) to [out=225,in=90] (-1.25,0) to [out=270,in=135] (-0.88,-0.88);
\draw [white,line width=.15cm] (-.6,.8) to (-1.8,2.4);
\draw [white,line width=.15cm] (-.6,-.8) to (-1.8,-2.4);
\draw [very thick] (-.6,.8) to (-2.1,2.8);
\draw [very thick] (-.6,-.8) to (-2.1,-2.8);
\draw [very thick] (-2.7,3.6) to (-3,4);
\draw [very thick] (-2.7,-3.6) to (-3,-4);
%%%%%right side
\draw[dotted] (2.29,2.29) to [out=315,in=90] (3.25,0) to [out=270,in=45] (2.29,-2.29);
\draw[dotted] (3.4,3.4) to [out=315,in=90] (4.75,0) to [out=270,in=45] (3.4,-3.4);
\draw[dotted] (1.93,1.93) to [out=315,in=90] (2.75,0) to [out=270,in=45] (1.93,-1.93);
\draw[dotted] (0.88,0.88) to [out=315,in=90] (1.25,0) to [out=270,in=45] (0.88,-0.88);
\draw [very thick] (1.5,2) to (1.98,2.64);
\draw [very thick] (2.79,3.72) to (3,4);
\draw [very thick] (1.5,-2) to (1.98,-2.64);
\draw [very thick] (2.79,-3.72) to (3,-4);
\draw [very thick] (.6,-.8) to (.9,-1.2);
\draw [very thick] (.6,.8) to (.9,1.2);
% boundary markings
\node at (0,-1) {$*$};
\node at (0,-5) {$*$};
\draw [dashed] (0,-1) to (0,-5);
% T_m
\node at (0,1.95) {\tiny$W_2$};
\node at (0,3.95) {\tiny$W_1$};
\end{tikzpicture}
\]
and resolving all crossings via \eqref{eq:braid}. This is well-defined thanks to Lemma \ref{lem:Reid} and it induces a strict symmetric monoidal structure on $\ATL$ such that both the inclusion $\TL\to \ATL$ and  $\phi \colon \ATL \to \reph$ become symmetric monoidal functors. Let $\iota$ and $\iota^\prime$ denote the endo-functors of $\ATL$ given by superposing a single strand on the right or left:
\[
 \begin{tikzpicture}[anchorbase, scale=.3]
%% stuff inside 
\draw[thick] (0,0) circle (2.5);
\fill[black,opacity=.2] (0,0) circle (2.5);
\draw[thick,fill=white] (0,0) circle (1.5);
\draw[dotted] (-1.93,1.93) to [out=45,in=180] (0,2.75) to [out=0,in=135] (1.93,1.93);
\draw[dotted] (-0.88,0.88) to [out=45,in=180] (0,1.25) to [out=0,in=135] (0.88,0.88);
%% web edges
\draw [very thick] (.8,.6) to (1.2,.9);
\draw [very thick] (-.8,.6) to (-1.2,.9);
\draw [very thick](2,1.5) to (2.4,1.8);
\draw [very thick] (-2,1.5) to (-2.4,1.8);
\draw [white, line width=.12cm] (-.8,-.6) to (-2.4,-1.8);
\draw [very thick] (-.8,-.6) to (-2.4,-1.8);
% boundary markings
\node at (0,-1) {$*$};
\node at (0,-3) {$*$};
\draw [dashed] (0,-1) to (0,-3);
%% Inner boundary circle
\draw (0,0) circle (1);
%% Outer boundary circle
\draw (0,0) circle (3);
\end{tikzpicture}
\quad
\xleftarrow{\iota^\prime} 
\quad
\begin{tikzpicture}[anchorbase, scale=.3]
%% stuff inside 
\draw[thick] (0,0) circle (2.5);
\fill[black,opacity=.2] (0,0) circle (2.5);
\draw[thick,fill=white] (0,0) circle (1.5);
\draw[dotted] (-1.93,1.93) to [out=45,in=180] (0,2.75) to [out=0,in=135] (1.93,1.93);
\draw[dotted] (-0.88,0.88) to [out=45,in=180] (0,1.25) to [out=0,in=135] (0.88,0.88);
%% web edges
\draw [very thick] (.8,.6) to (1.2,.9);
\draw [very thick] (-.8,.6) to (-1.2,.9);
\draw [very thick] (2,1.5) to (2.4,1.8);
\draw [very thick] (-2,1.5) to (-2.4,1.8);
% boundary markings
\node at (0,-1) {$*$};
\node at (0,-3) {$*$};
\draw [dashed] (0,-1) to (0,-3);
%% Inner boundary circle
\draw (0,0) circle (1);
%% Outer boundary circle
\draw (0,0) circle (3);
\end{tikzpicture}
\quad
\xrightarrow{\iota}
\quad 
 \begin{tikzpicture}[anchorbase, scale=.3]
%% stuff inside 
\draw[thick] (0,0) circle (2.5);
\fill[black,opacity=.2] (0,0) circle (2.5);
\draw[thick,fill=white] (0,0) circle (1.5);
\draw[dotted] (-1.93,1.93) to [out=45,in=180] (0,2.75) to [out=0,in=135] (1.93,1.93);
\draw[dotted] (-0.88,0.88) to [out=45,in=180] (0,1.25) to [out=0,in=135] (0.88,0.88);
%% web edges
\draw [very thick] (.8,.6) to (1.2,.9);
\draw [very thick] (-.8,.6) to (-1.2,.9);
\draw [very thick](2,1.5) to (2.4,1.8);
\draw [very thick] (-2,1.5) to (-2.4,1.8);
\draw [white, line width=.12cm] (.8,-.6) to (2.4,-1.8);
\draw [very thick] (.8,-.6) to (2.4,-1.8);
% boundary markings
\node at (0,-1) {$*$};
\node at (0,-3) {$*$};
\draw [dashed] (0,-1) to (0,-3);
%% Inner boundary circle
\draw (0,0) circle (1);
%% Outer boundary circle
\draw (0,0) circle (3);
\end{tikzpicture}
\]
Note that these endo-functors can be written as $\iota^\prime(-) = (-)\otimes 1$ and $\iota^\prime(-)= \wrapi \iota(-)\wrap$.

\subsection{A quotient}
It is a key observation that the functor $\phi \colon \ATL \to \reph$ is not faithful.
\begin{proposition}\label{prop:nff}~\vspace{-.5cm}
 \begin{equation}
\label{eqn:essentialideal}
  \phi\left(
  \begin{tikzpicture}[anchorbase, scale=.3]
    %% Inner boundary circle
    \draw (0,0) circle (1);
    %% Outer boundary circle
    \draw (0,0) circle (3);
    %% Essential circle
    \draw [very thick] (0,0) circle (2);
    %% identity component
  \draw [very thick] (-.9,.4) to (-2.8,1.2);
    \node at (0,1.35) {$\cdots$};
    \node at (0,2.5) {$\cdots$};
    \draw [very thick] (.9,.4) to (2.8,1.2);
    % boundary markings
    \node at (0,-1) {$*$};
    \node at (0,-3) {$*$};
    \draw [dashed] (0,-1) to (0,-3);
  \end{tikzpicture}
  \right)
  =0.
\end{equation}
\end{proposition}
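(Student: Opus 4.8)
The plan is to compute directly the linear map that $\phi$ assigns to the essential circle and to show that it vanishes because it is built from the trace of $\phi(\wrap)$, whose two eigenvalues $i$ and $i^{-1}=-i$ cancel. I would read the displayed diagram as an endomorphism of $V^{\otimes n}$ in $\reph$, where $n$ is the number of through-strands, and introduce an auxiliary copy $V_0$ of the vector representation carried by the essential circle itself.

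The crucial observation is that $\phi$ is symmetric monoidal, so that each crossing of the essential circle with a through-strand is sent to a genuine transposition of tensor factors; across such a transposition the value carried by each strand is transported unchanged to the other side (values follow strands). Travelling once around the essential circle, the only non-trivial contribution therefore comes from the single passage across the dashed seam, which by the definition of $\phi$ acts on $V_0$ as $\phi(\wrap)|_{V_0}=\operatorname{diag}(i,i^{-1})$. Since each through-strand meets the circle exactly once and no two through-strands meet each other, every through-strand returns its incoming value; hence, after closing the loop, the through-strands contribute $\id_{V^{\otimes n}}$ while the circle contributes the closed-loop evaluation of its holonomy $\phi(\wrap)|_{V_0}$.

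Concretely I would argue that, up to the symmetric reshuffling of tensor factors, the diagram factors as the identity on the through-strands together with a single closed loop of holonomy $\phi(\wrap)|_{V_0}$, and then evaluate that loop using the cup and cap of $\phi$. Because a loop with trivial holonomy evaluates to the scalar $-2$, a loop with holonomy $M$ evaluates to $-\tr(M)$, so that
\[
\phi\!\left(\text{essential circle in }\ATL_n\right)\;=\;\bigl(-\tr\,\phi(\wrap)|_{V_0}\bigr)\cdot \id_{V^{\otimes n}}\;=\;-(i+i^{-1})\,\id_{V^{\otimes n}}\;=\;0 .
\]
This vanishing is precisely the reason for the normalization $c_+=c_-^{-1}=i$ fixed in the definition of $\phi(\wrap)$, as anticipated in the discussion preceding Proposition~\ref{prop:nff}.

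I expect the main obstacle to be the careful justification of this decoupling: one must check, using the symmetric structure established in Lemma~\ref{lem:Reid} and the fact that values follow strands across a transposition, that the essential circle genuinely contributes an independent trace factor and that the wrap operator acts on the circle's \emph{own} tensor factor rather than becoming entangled with the through-strands (a point at which a naive bookkeeping of the crossings can easily go wrong). Once this is in place, the entire computation collapses to the single identity $i+i^{-1}=0$, and the statement follows.
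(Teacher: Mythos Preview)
Your approach is correct. The paper states this proposition without proof, presumably because it is meant as an immediate computation once the symmetric monoidal structure is in place.

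The cleanest way to execute your plan, and to dissolve the decoupling issue you anticipate as the main obstacle, is to observe that the displayed element is literally $\id_n\otimes C$ in the superposition monoidal structure on $\ATL$, where $C\in\ATL(0,0)$ is the bare essential circle. Indeed, $\otimes$ is \emph{defined} by superposing annular diagrams and resolving the resulting crossings via~\eqref{eq:braid}, so the picture of an essential circle crossing $n$ radial strands is exactly what $\id_n\otimes C$ produces. Since $\phi$ is symmetric monoidal (established just before this subsection), $\phi(\id_n\otimes C)=\phi(C)\cdot\id_{V^{\otimes n}}$, and it remains to check $\phi(C)=0$. Factoring $C$ as
\[
0\xrightarrow{\ \text{cup}\ }2\xrightarrow{\ \wrap\ }2\xrightarrow{\ \text{cap}\ }0
\]
and evaluating on $1\in\C$ gives $i+i^{-1}=0$ directly. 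This is precisely your trace computation, and the remark preceding the proposition confirms that the normalization $c_\pm=i^{\pm 1}$ was chosen for exactly this purpose. No separate bookkeeping of the crossings with through-strands is needed.

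One minor point of care: a loop closed as $\text{cap}\circ(M\otimes\id)\circ\text{cup}$ does evaluate to $-\tr(M)$ as you write, but in the factorization above the morphism $\wrap\in\ATL_2$ also transposes the two tensor factors. Since $\text{cap}\circ s=-\text{cap}$, the essential circle actually evaluates to $+\tr(\phi(\wrap)|_V)$. This is immaterial here, as both signs give zero, but it is worth tracking in related computations.
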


We define $\essATL$ as the quotient of $\ATL$ by the ideal generated by the elements shown in \eqref{eqn:essentialideal}, with any number of through-strings. 
 Note that the monoidal structure $\otimes$ and the functor $\phi$ descend to the quotient $\essATL$.

\begin{theorem} \label{thm:faithfulness}
  The functor $\phi:\essATL\mapsto \reph$ is fully faithful.
\end{theorem}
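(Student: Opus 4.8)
The plan is to deduce full faithfulness from a dimension count, after exhibiting an explicit finite basis of $\essATL(m,n)$ adapted to the weight decomposition. First I would describe the target. Writing $V=\C\langle v_+,v_-\rangle$ with $v_\pm$ of weight $\pm 1$, the module $V^{\otimes m}$ decomposes over $U(\h)$ into weight spaces whose bases are the sign sequences $\mathbf{s}\in\{+,-\}^m$, where $\mathbf{s}$ has weight $\mathrm{wt}(\mathbf{s})=\#\{j:s_j=+\}-\#\{j:s_j=-\}$. A weight-preserving map is block-diagonal across weights, so
\[
\Hom_{\reph}(V^{\otimes m},V^{\otimes n})=\bigoplus_{k}\Hom_\C\big((V^{\otimes m})_k,(V^{\otimes n})_k\big),
\]
with basis the elementary matrix units $E_{\mathbf{t},\mathbf{s}}\colon v_{\mathbf{s}}\mapsto v_{\mathbf{t}}$ (all other basis vectors to zero), indexed by pairs $(\mathbf{s},\mathbf{t})\in\{+,-\}^m\times\{+,-\}^n$ with $\mathrm{wt}(\mathbf{s})=\mathrm{wt}(\mathbf{t})$. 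In particular both spaces vanish unless $m\equiv n\pmod 2$, and $\dim_\C\Hom_{\reph}(V^{\otimes m},V^{\otimes n})$ equals the number $N(m,n)$ of such pairs. It then suffices to produce a spanning set $R(m,n)$ of $\essATL(m,n)$ whose image $\phi(R(m,n))$ is a basis of the target: spanning gives $\dim\essATL(m,n)\le |R(m,n)|$, and if $\phi(R(m,n))$ is simultaneously linearly independent and spanning then $\phi$ is both injective on $\essATL(m,n)$ (faithful) and surjective (full).

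Second, and this is the crux, I would produce such a set $R(m,n)$ by a normal-form analysis modulo the essential-circle ideal. Every class in $\essATL(m,n)$ is represented by a crossingless annular matching without circles, which is determined by its planar matching type together with the winding numbers of its strands around the core. The defining relation --- an essential circle, superposed with $\id_k$ and its crossings resolved via \eqref{eq:braid}, is set to zero --- lets one reduce these windings to a finite range. Already for a single through-strand this resolution gives $\wrap+\wrapi=0$, equivalently $\wrap^2=-\id$, and I expect the analogous local move applied to a bundle of parallel strands to reduce their total winding modulo a bounded amount, up to sign. Carrying this out diagrammatically --- setting up the reductions, checking confluence, and proving that the surviving reduced diagrams are genuinely unrelated --- is the main obstacle: it is exactly the problem of controlling the ideal generated by essential circles tightly enough that $\essATL(m,n)$ is neither left too large nor collapsed too far. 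The intended outcome is a reduced set $R(m,n)$ that visibly spans and that I will put in explicit bijection with the pairs $(\mathbf{s},\mathbf{t})$ above, so that $|R(m,n)|=N(m,n)$.

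Finally I would compute $\phi$ on the reduced diagrams and establish the required independence. Reading a reduced diagram as a linear map, an input sign sequence on the bottom is transported along the strands: through-strands preserve signs (each wrap $\wrap$ contributing the phase $i^{\pm 1}$ dictated by the weight it carries), cups impose the antisymmetric pattern $v_+\otimes v_- - v_-\otimes v_+$, and caps pair opposite signs while annihilating equal ones. This presents $\phi$ of each reduced diagram as a signed sum of matrix units $E_{\mathbf{t},\mathbf{s}}$ possessing a well-defined leading term --- determined by the matching together with the winding data --- so that, under a suitable ordering compatible with the bijection of the previous step, the transition matrix from $\phi(R(m,n))$ to $\{E_{\mathbf{t},\mathbf{s}}\}$ is triangular with invertible diagonal entries (this is where the specific choice $\phi(\wrap)\colon v_\pm\mapsto i^{\pm 1}v_\pm$ enters). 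Hence $\phi(R(m,n))$ is a basis of $\Hom_{\reph}(V^{\otimes m},V^{\otimes n})$; combined with the spanning from the previous step this shows $\phi$ carries a basis of $\essATL(m,n)$ to a basis of the target, which is precisely full faithfulness.
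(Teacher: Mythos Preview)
Your overall strategy --- produce a spanning set of the source of size equal to the dimension of the target, then show $\phi$ carries it to a basis --- is sound and close in spirit to the paper's argument. The real difficulty, as you yourself flag, is your second step: controlling the windings of through-strands in $\essATL(m,n)$ well enough to extract a normal form of the right cardinality. You conjecture that the essential-circle relations applied to bundles of parallel strands will reduce windings ``modulo a bounded amount, up to sign,'' but you do not carry this out, and for general $m,n$ (with caps, cups, and through-strands each carrying their own winding data) this is genuinely intricate. Your triangularity step rests on the same unconstructed normal form and on an unspecified total order, so both gaps are live.

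The paper sidesteps this entirely by a trick worth absorbing: it first uses rigidity (bend all inner endpoints to the outer boundary via nested caps and cups, with matching isomorphisms on the $\reph$ side) to reduce to checking that $\phi\colon\essATL(0,2n)\to\Hom_{\reph}(\C,V^{\otimes 2n})\cong (V^{\otimes 2n})_0$ is an isomorphism. With no inner boundary points there are \emph{no through-strands and hence no windings to tame}; the essential-circle relation simply kills any diagram containing a core circle, and the evident spanning set is the set of annular crossingless matchings of the $2n$ outer points, of which there are $\binom{2n}{n}$ (matching $\dim (V^{\otimes 2n})_0$). Finally, rather than a triangularity argument, the paper proves surjectivity: post-composing with the $\mathfrak{S}_{2n}$-action on both sides reduces the problem to hitting a single weight-zero vector, and $v_{+-+-\cdots+-}$ is obtained as the image of the $n$-fold tensor power of an explicit element computed in the base case $n=1$. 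Surjectivity together with the dimension bound then forces injectivity. So the paper's route buys a one-line description of the spanning set and a two-line surjectivity argument, at the price of one preliminary bending isomorphism --- exactly the step that dissolves the obstacle you identified.
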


\begin{proof}
We consider the isomorphisms $f\colon \essATL(2n-m,m) \to \essATL(0,2n)$ and their inverses, which are given by the following operations on diagrams.
\[  f\;\;= \;\begin{tikzpicture}[anchorbase, scale=.3]
%% stuff inside -- inner
\draw[thick] (0,0) circle (3.5);
\fill[black,opacity=.2] (0,0) circle (3.5);
\draw[thick,fill=white] (0,0) circle (2.5);
%% Inner boundary circle
\draw (0,0) circle (1);
%% Outer boundary circle
\draw (0,0) circle (4);
%% left side
\draw[dotted] (-2.65,2.65) to [out=225,in=90] (-3.75,0) to [out=270,in=135] (-2.65,-2.65);
\draw[dotted] (-1.59,1.59) to [out=225,in=90] (-2.25,0) to [out=270,in=135] (-1.59,-1.59);
\draw [very thick] (-2.4,3.2) to (-2.1,2.8);
\draw [very thick] (-2.4,-3.2) to (-2.1,-2.8);
%%%%%right side
\draw [white,line width=.15cm] (0,2.4) to (0,3.6);
\draw[very thick] (-1.5,2) to [out=300,in=270] (0,2.5) to (0,4);
\draw [white,line width=.15cm] (1.97,1.47) to (3.16,2.37);
\draw[very thick] (-1.5,-2) to [out=60,in=270] (-1.75,0) to [out=90,in=180] (0,1.75) to [out=0,in=210] (2,1.5) to (3.2,2.4);
% boundary markings
\node at (0,-1) {$*$};
\node at (0,-4) {$*$};
\draw [dashed] (0,-1) to (0,-4);
\end{tikzpicture}  
\quad,\quad 
f^{-1}\;\;=\;
 \begin{tikzpicture}[anchorbase, scale=.3]
%% stuff inside -- inner
\draw[thick] (0,0) circle (2.5);
\fill[black,opacity=.2] (0,0) circle (2.5);
\draw[thick,fill=white] (0,0) circle (1.5);
%% Inner boundary circle
\draw (0,0) circle (1);
%% Outer boundary circle
\draw (0,0) circle (4);
%% left side
\draw[dotted] (-2.29,2.29) to [out=225,in=90] (-3.25,0) to [out=270,in=135] (-2.29,-2.29);
\draw [very thick] (-2.4,3.2) to (-1.5,2);
\draw [very thick] (-2.4,-3.2) to (-1.5,-2);
%%%%%right side
\draw [white,line width=.15cm] (1,0) to (2.6,0);
\draw [white,line width=.15cm] (.6,-.8) to (1.8,-2.4);
\draw [very thick] (1,0) to (2.5,0) to [out=0,in=30] (1.5,2);
\draw [very thick] (.6,-.8) to (1.8,-2.4) to[out=300,in=225]  (2.46,-2.46) to [out=45,in=285] (3.38,.9) to[out=105,in=345] (.9,3.38) to [out=165,in=90] (0,3) to (0,2.5);
\draw[dotted] (1.94,1.94) to [out=135,in=0] (0,2.75) ;
\draw[dotted] (1.94,-1.94) to [out=45,in=270] (2.75,0) ;
\draw[dotted]  (1.25,0) to [out=270,in=45] (0.88,-0.88);
% boundary markings
\node at (0,-1) {$*$};
\node at (0,-4) {$*$};
\draw [dashed] (0,-1) to (0,-4);
\end{tikzpicture}
\]
Since these operations are given by tensoring with an identity morphism and then pre-composing with cups, or post-composing with caps, there are corresponding isomorphisms $\phi(f^{\pm 1})$ between the morphism spaces $\Hom_{\reph}(V^{\otimes 2n-m},V^{\otimes m})$ and $\Hom_{\reph}(\C,V^{\otimes 2n})$.
    To prove the theorem, it suffices to check that $\phi$ induces an isomorphism between the morphism spaces $\essATL(0,2n)$ and $\Hom_{\reph}(\C,V^{\otimes 2n})$. Note that the former is spanned by annular crossingless matchings with no closed components, of which there are $\binom{2n}{n}$. To see this, we define a bijection between annular crossingless matchings and the set of labelings of their boundary points by an equal number of labels `in' and `out'. This can be done by the following inverse rules:
  \begin{itemize}
    \item given a crossingless matching, find an isotopy representative and an orientation of the arcs so that they all turn clockwise around the annulus. Assign `in' to the tail and `out' to the head.
  \item given a labeling of boundary points, connect by an arc that only wraps in the clockwise direction around the annulus any pair of adjacent boundary points with labels `in' and `out' in that order. Then, iterate the process for all pairs of `in'- and `out'-labeled points in positions $k$ and $l$ whose neighbors in position $k+1$ and $l-1$ have already been connected by arcs. 
  \end{itemize}

On the $\reph$ side, $\Hom_{\reph}(\C,V^{\otimes 2n})\simeq (V^{\otimes 2n})_0$, the zero weight space, is of dimension $\binom{2n}{n}$, and spanned by vectors $v_\epsilon:=v_{\epsilon_1}\otimes \cdots \otimes v_{\epsilon_{2n}}$ where $\epsilon=(\epsilon_1,\dots,\epsilon_{2n})\in\{+,-\}^{2n}$ contains as many pluses as minuses. We shall  argue that $\phi$ is surjective when restricted to $\essATL(0,2n)$, which also implies injectivity for dimension reasons.

  By post-composition with the $\mathfrak{S}_{2n}$-action on both sides of the map $\phi$, it is actually enough to show that one vector $v_\epsilon$ is hit. For if one is hit, since all the other ones lie in the same $\mathfrak{S}_{2n}$-orbit, they will be hit as well. We consider the case of $v_{+-+-\dots +-}$, and we will prove the result first for $n=1$:
  \[
  \phi\left(
  \begin{tikzpicture}[anchorbase, scale=.2]
    %% Inner boundary circle
    \draw (0,0) circle (1);
    %% Outer boundary circle
    \draw (0,0) circle (3);
    %% Curves
        \draw [very thick] (1,2.8) to [out=250,in=0] (0,1.8) to [out=180,in=290] (-1,2.8);
    % boundary markings
    \node at (0,-1) {$*$};
    \node at (0,-3) {$*$};
    \draw [dashed] (0,-1) to (0,-3);
  \end{tikzpicture}
  \right)(1)
  =
  v_{+-}-v_{-+}
  \quad,\quad
  \phi\left(
  \begin{tikzpicture}[anchorbase, scale=.2]
    %% Inner boundary circle
    \draw (0,0) circle (1);
    %% Outer boundary circle
    \draw (0,0) circle (3);
    %% Curves
    \draw [very thick] (1,2.8) to [out=250, in=90] (2,0) to [out=-90,in=0] (0,-1.8) to [out=180,in=-90] (-2,0) to [out=90,in=290] (-1,2.8);
    % boundary markings
    \node at (0,-1) {$*$};
    \node at (0,-3) {$*$};
    \draw [dashed] (0,-1) to (0,-3);
  \end{tikzpicture}
  \right)(1)
  = i v_{+-}+iv_{-+}
  \]
  Denoting $w_1$ and $w_2$ the two generators of $\essATL(0,2)$ used above, we see that $v_{+-}$ is obtained as the image of $\phi((w_1-iw_2)/2)$ applied to $1\in \C$.
  
Now the $n$-fold diagrammatic tensor power of $(w_1-iw_2)/2$ gives a morphism in $\essATL(0,2n)$ whose $\phi$ image evaluates to $v_{+-+-\dots +-}$ on $1\in \C$. 
\end{proof}

\begin{remark}
In particular, this proves that crossingless matchings form a basis for the $\Hom$-spaces $\essATL(0,2n)$ and $\essATL(2n,0)$ that can be used to deduce explicit bases for all $\Hom$-spaces. 
\end{remark}

\begin{example}\label{exa:wraps} In $\essATL_1$ we have $\wrap=-\wrapi$ and thus $\wrap^2=-1$.
\end{example}
\begin{proof}
This follows directly from resolving the crossing in the defining relation:
\begin{equation*}
0 \quad =\quad \begin{tikzpicture}[anchorbase, scale=.3]
%% Inner boundary circle
\draw (0,0) circle (1);
%% Outer boundary circle
\draw (0,0) circle (3);
%% Essential circle
\draw [very thick] (0,0) circle (2);
%% identity component
\draw [very thick] (.9,.4) to (2.8,1.2);
% boundary markings
\node at (0,-1) {$*$};
\node at (0,-3) {$*$};
\draw [dashed] (0,-1) to (0,-3);
\end{tikzpicture}
\quad = \quad
\begin{tikzpicture}[anchorbase, scale=.3]
%% Essential circle
%% identity component
\draw [very thick] (.9,.4) to (1.08,.48) to [out=30,in=0] (0, 1.75) to [out=180,in=90]  (-2,0) to [out=270,in=180]  (0,-2) to [out=0,in=270] (2.25,0) to [out=90,in=210](2.8,1.2);
% boundary markings
\node at (0,-1) {$*$};
\node at (0,-3) {$*$};
\draw [dashed] (0,-1) to (0,-3);
%% Inner boundary circle
\draw (0,0) circle (1);
%% Outer boundary circle
\draw (0,0) circle (3);
\end{tikzpicture}
\;\;+ \;\;
\begin{tikzpicture}[anchorbase, scale=.3]
%% Essential circle
%% identity component
\draw [very thick] (.9,.4) to (1.08,.48) to [out=30,in=90] (1.75,0) to [out=270,in=0]  (0,-2) to [out=180,in=270]  (-2,0) to [out=90,in=180] (0,2.25) to [out=0,in=210](2.8,1.2);
% boundary markings
\node at (0,-1) {$*$};
\node at (0,-3) {$*$};
\draw [dashed] (0,-1) to (0,-3);
%% Inner boundary circle
\draw (0,0) circle (1);
%% Outer boundary circle
\draw (0,0) circle (3);
\end{tikzpicture} \vspace{-.6cm}
\end{equation*}
\end{proof}

%---------------------------------------------------------------------------------------
\subsection{Extremal weight projectors}
%---------------------------------------------------------------------------------------

\begin{definition}\label{def:T} The elements $T_m\in\essATL_{m}$ are recursively defined via $T_{m+1}=\iota(T_m)s_m\iota(T_m)$ for $m\geq 2$ with initial conditions $T_1=\id_1$ and $T_2=\id_2+U_1/2 + U_0/2$:
\begin{equation}
\label{eq:recursivestart}
T_2
\;\;=\;\;
 \begin{tikzpicture}[anchorbase, scale=.3]
%% Inner boundary circle
\draw (0,0) circle (1);
%% Outer boundary circle
\draw (0,0) circle (3);
%% web edges
\draw [very thick] (.8,.6) to (2.4,1.8);
\draw [very thick] (-.8,.6) to (-2.4,1.8);
% boundary markings
\node at (0,-1) {$*$};
\node at (0,-3) {$*$};
\draw [dashed] (0,-1) to (0,-3);
\end{tikzpicture}
%%% second summand
+\frac{1}{2} \;
 \begin{tikzpicture}[anchorbase, scale=.3]
%% Inner boundary circle
\draw (0,0) circle (1);
%% Outer boundary circle
\draw (0,0) circle (3);
%% web edges
\draw [very thick] (.8,.6) to [out=45, in=0] (0,1.5);
\draw [very thick] (-.8,.6) to [out=135,in=180] (0,1.5);
\draw [very thick] (0,2.25) to [out=0,in=225](2.4,1.8);
\draw [very thick] (0,2.25) to [out=180,in=315](-2.4,1.8);
% boundary markings
\node at (0,-1) {$*$};
\node at (0,-3) {$*$};
\draw [dashed] (0,-1) to (0,-3);
\end{tikzpicture}
+\frac{1}{2} \;
 \begin{tikzpicture}[anchorbase, scale=.3]
%% Inner boundary circle
\draw (0,0) circle (1);
%% Outer boundary circle
\draw (0,0) circle (3);
%% web edges
\draw [very thick] (.8,.6) to [out=45, in=90] (1.5,0) to [out=270,in=0] (0,-1.5);
\draw [very thick] (-.8,.6) to [out=135,, in=90] (-1.5,0) to [out=270,in=180] (0,-1.5);
\draw [very thick] (0,-2.25) to [out=0,in=270] (2.25,0) to [out=90,in=225](2.4,1.8);
\draw [very thick] (0,-2.25) to [out=180,in=270] (-2.25,0) to [out=90,in=315](-2.4,1.8);
% boundary markings
\node at (0,-1) {$*$};
\node at (0,-3) {$*$};
\draw [dashed] (0,-1) to (0,-3);
\end{tikzpicture}
\quad=\quad
\frac{1}{2}\left(  
\;
 \begin{tikzpicture}[anchorbase, scale=.3]
%% Inner boundary circle
\draw (0,0) circle (1);
%% Outer boundary circle
\draw (0,0) circle (3);
%% web edges
\draw [very thick] (.8,.6) to [out=45, in=315] (0,2.25) to [out=135,in=315] (-2.4,1.8);
\draw [very thick] (-.8,.6) to [out=135,in=225] (0,2.25) to [out=45,in=225] (2.4,1.8);
% boundary markings
\node at (0,-1) {$*$};
\node at (0,-3) {$*$};
\draw [dashed] (0,-1) to (0,-3);
\end{tikzpicture}
+ \;
 \begin{tikzpicture}[anchorbase, scale=.3]
%% Inner boundary circle
\draw (0,0) circle (1);
%% Outer boundary circle
\draw (0,0) circle (3);
%% web edges
\draw [very thick] (-.8,.6) to [out=135, in=90] (-1.5,0) to [out=270,in=135] (0,-2.25) to  [out=315,in=270] (2.25,0) to [out=90,in=225](2.4,1.8);
\draw [very thick] (.8,.6) to [out=45, in=90] (1.5,0) to [out=270,in=45] (0,-2.25) to [out=225,in=270] (-2.25,0) to [out=90,in=315](-2.4,1.8);
% boundary markings
\node at (0,-1) {$*$};
\node at (0,-3) {$*$};
\draw [dashed] (0,-1) to (0,-3);
\end{tikzpicture}
\right)
\end{equation}
\end{definition}

The recursive relation can be depicted as follows:
\begin{equation}
\label{eq:recursive}
 \begin{tikzpicture}[anchorbase, scale=.3]
%% stuff inside -- inner
\draw[thick] (0,0) circle (4);
\fill[black,opacity=.2] (0,0) circle (4);
\draw[thick,fill=white] (0,0) circle (2);
\draw[dotted] (-1.05,1.05) to [out=45,in=180] (0,1.5) to [out=0,in=135] (1.05,1.05);
%% stuff inside -- outer
\draw [thick] (0,0) circle (2);
\draw[dotted] (-3.16,3.16) to [out=45,in=180] (0,4.5) to [out=0,in=135] (3.16,3.16);
%% Inner boundary circle
\draw (0,0) circle (1);
%% Outer boundary circle
\draw (0,0) circle (5);
%% web edges
\draw [very thick] (.8,.6) to (1.6,1.2);
\draw [very thick] (1,0) to (2,0);
\draw [very thick] (4,0) to (5,0);
\draw [very thick] (-.8,.6) to (-1.6,1.2);
\draw [very thick] (3.2,2.4) to (4,3);
\draw [very thick] (-3.2,2.4) to (-4,3);
% boundary markings
\node at (0,-1) {$*$};
\node at (0,-5) {$*$};
\draw [dashed] (0,-1) to (0,-5);
% T_{m+1}
\node at (0,2.95) {\small$T_{m+1}$};
\end{tikzpicture}
\;\; :=\;\;
 \begin{tikzpicture}[anchorbase, scale=.3]
%% stuff inside -- inner
\draw[thick] (0,0) circle (2.5);
\fill[black,opacity=.2] (0,0) circle (2.5);
\draw[thick,fill=white] (0,0) circle (1.5);
\draw[dotted] (-1.93,1.93) to [out=45,in=180] (0,2.75) to [out=0,in=135] (1.93,1.93);
\draw[dotted] (-0.88,0.88) to [out=45,in=180] (0,1.25) to [out=0,in=135] (0.88,0.88);
%% stuff inside -- outer
\draw[thick] (0,0) circle (4.5);
\fill[black,opacity=.2] (4.5,0) arc (0:360:4.5) -- (3.5,0) arc (360:0:3.5);
\draw [thick] (0,0) circle (3.5);
\draw[dotted] (-3.4,3.4) to [out=45,in=180] (0,4.75) to [out=0,in=135] (3.4,3.4);
\draw[dotted] (-2.29,2.29) to [out=45,in=180] (0,3.25) to [out=0,in=135] (2.29,2.29);
%% Inner boundary circle
\draw (0,0) circle (1);
%% Outer boundary circle
\draw (0,0) circle (5);
%% web edges
\draw [very thick] (.8,.6) to (1.2,.9);
\draw [very thick] (-.8,.6) to (-1.2,.9);
\draw [very thick] (-2,1.5) to (-2.8,2.1);
\fill [white] (1.4,.2) -- (2.6,.2) -- (2.6,-.2) -- (1.4,-.2);
\fill [white] (3.4,.2) -- (4.6,.2) -- (4.6,-.2) -- (3.4,-.2);
\draw [draw =white, double=black, thick, double distance=1.25pt] (1,0) -- (2.5,0) to [out=0,in=210] (2.8,2.1);
\draw [very thick] (2,1.5) to [out=30,in=180] (3.5,0) to (5,0);
\draw [very thick] (2.8,2.1) -- (2.8,2.1);
\draw [very thick] (3.6,2.7) to (4,3);
\draw [very thick] (-3.6,2.7) to (-4,3);
% boundary markings
\node at (0,-1) {$*$};
\node at (0,-5) {$*$};
\draw [dashed] (0,-1) to (0,-5);
% T_m
\node at (0,1.95) {\tiny$T_m$};
\node at (0,3.95) {\tiny$T_m$};
\end{tikzpicture}
\;\;=\;\;
 \begin{tikzpicture}[anchorbase, scale=.3]
%% stuff inside -- inner
\draw[thick] (0,0) circle (2.5);
\fill[black,opacity=.2] (0,0) circle (2.5);
\draw[thick,fill=white] (0,0) circle (1.5);
\draw[dotted] (-1.93,1.93) to [out=45,in=180] (0,2.75) to [out=0,in=135] (1.93,1.93);
\draw[dotted] (-0.88,0.88) to [out=45,in=180] (0,1.25) to [out=0,in=135] (0.88,0.88);
%% stuff inside -- outer
\draw[thick] (0,0) circle (4.5);
\fill[black,opacity=.2] (4.5,0) arc (0:360:4.5) -- (3.5,0) arc (360:0:3.5);
\draw [thick] (0,0) circle (3.5);
\draw[dotted] (-3.4,3.4) to [out=45,in=180] (0,4.75) to [out=0,in=135] (3.4,3.4);
\draw[dotted] (-2.29,2.29) to [out=45,in=180] (0,3.25) to [out=0,in=135] (2.29,2.29);
%% web edges
\draw [very thick] (.8,.6) to (1.2,.9);
\draw [very thick] (-.8,.6) to (-1.2,.9);
\draw [very thick] (-2,1.5) to (-2.8,2.1);
\draw [very thick] (2,1.5) to (2.8,2.1);
\fill [white] (1.4,.2) -- (2.6,.2) -- (2.6,-.2) -- (1.4,-.2);
\fill [white] (3.4,.2) -- (4.6,.2) -- (4.6,-.2) -- (3.4,-.2);
\draw [draw =white, double=black, thick, double distance=1.25pt] (1,0) -- (5,0);
\draw [very thick] (2.8,2.1) -- (2.8,2.1);
\draw [very thick] (3.6,2.7) to (4,3);
\draw [very thick] (-3.6,2.7) to (-4,3);
%% Inner boundary circle
\draw (0,0) circle (1);
%% Outer boundary circle
\draw (0,0) circle (5);
% boundary markings
\node at (0,-1) {$*$};
\node at (0,-5) {$*$};
\draw [dashed] (0,-1) to (0,-5);
% T_m
\node at (0,1.95) {\tiny$T_m$};
\node at (0,3.95) {\tiny$T_m$};
\end{tikzpicture}
\;+\;
 \begin{tikzpicture}[anchorbase, scale=.3]
%% stuff inside -- inner
\draw[thick] (0,0) circle (2.5);
\fill[black,opacity=.2] (0,0) circle (2.5);
\draw[thick,fill=white] (0,0) circle (1.5);
\draw[dotted] (-1.93,1.93) to [out=45,in=180] (0,2.75) to [out=0,in=135] (1.93,1.93);
\draw[dotted] (-0.88,0.88) to [out=45,in=180] (0,1.25) to [out=0,in=135] (0.88,0.88);
%% stuff inside -- outer
\draw[thick] (0,0) circle (4.5);
\fill[black,opacity=.2] (4.5,0) arc (0:360:4.5) -- (3.5,0) arc (360:0:3.5);
\draw [thick] (0,0) circle (3.5);
\draw[dotted] (-3.4,3.4) to [out=45,in=180] (0,4.75) to [out=0,in=135] (3.4,3.4);
\draw[dotted] (-2.29,2.29) to [out=45,in=180] (0,3.25) to [out=0,in=135] (2.29,2.29);
%% Inner boundary circle
\draw (0,0) circle (1);
%% Outer boundary circle
\draw (0,0) circle (5);
%% web edges
\draw [very thick] (.8,.6) to (1.2,.9);
\draw [very thick] (-.8,.6) to (-1.2,.9);
\draw [very thick] (-2,1.5) to (-2.8,2.1);
\fill [white] (1.4,.2) -- (2.6,.2) -- (2.6,-.2) -- (1.4,-.2);
\fill [white] (3.4,.2) -- (4.6,.2) -- (4.6,-.2) -- (3.4,-.2);
\draw [draw =white, double=black, thick, double distance=1.25pt] (1,0) -- (2.5,0) to [out=0,in=30] (2,1.5);
\draw [very thick] (2.8,2.1) to [out=210,in=105] (3.09,0.83)to [out=285,in=180] (3.5,0) to (5,0);
\draw [very thick] (2.8,2.1) -- (2.8,2.1);
\draw [very thick] (3.6,2.7) to (4,3);
\draw [very thick] (-3.6,2.7) to (-4,3);
% boundary markings
\node at (0,-1) {$*$};
\node at (0,-5) {$*$};
\draw [dashed] (0,-1) to (0,-5);
% T_m
\node at (0,1.95) {\tiny$T_m$};
\node at (0,3.95) {\tiny$T_m$};
\end{tikzpicture}
\end{equation}

\begin{theorem} \label{thm:extweightproj}
  The element $\diagrep(T_m)$ is the endomorphism of $V^{\otimes m}$ projecting onto the extremal weight space $\C\la v_{+\cdots +}, v_{-\cdots -}\ra$.
\end{theorem}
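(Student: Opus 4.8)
The plan is to show directly that $\phi(T_m)$ fixes the two extremal weight vectors $v_{+\cdots +}$ and $v_{-\cdots -}$ and annihilates every other standard basis vector $v_\epsilon$ of $V^{\otimes m}$. Since every morphism in the image of $\phi$ is weight-preserving and the extremal weight space is exactly the sum of the one-dimensional weight spaces of weights $\pm m$, this list of values identifies $\phi(T_m)$ with the projection onto $\C\la v_{+\cdots +}, v_{-\cdots -}\ra$ along the sum of all other weight spaces, which is what the statement asserts. I would argue by induction on $m$, following the recursive definition of $T_m$.

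For the base cases I would compute $\phi(T_1)=\id_V$ (which is the extremal projection, as $V$ carries only the weights $\pm 1$) and evaluate $\phi(T_2)=\id+\tfrac12\phi(U_1)+\tfrac12\phi(U_0)$ on the standard basis of $V\otimes V$. Here $\phi(U_1)$ is the cup–cap composite: it kills $v_{++},v_{--}$ and sends $v_{+-},v_{-+}$ into the zero-weight space, while $\phi(U_0)=\phi(\wrap)\phi(U_1)\phi(\wrapi)$ is obtained by tracking the scalars $i^{\pm 1}$ attached to $\phi(\wrap)$. A short calculation then yields $\phi(T_2)(v_{\pm\pm})=v_{\pm\pm}$ and $\phi(T_2)(v_{+-})=\phi(T_2)(v_{-+})=0$, as required; equivalently one may verify this from the symmetric form $T_2=\tfrac12\bigl((\id_2+U_1)+(\id_2+U_0)\bigr)$ displayed in \eqref{eq:recursivestart}.

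For the inductive step ($m\ge 2$) I would use that $\phi$ is monoidal to write $\phi(\iota(T_m))=\phi(T_m)\otimes\id_V$, the projection acting on the $m$ factors carrying $T_m$ and the identity on the remaining through-strand, and that $\phi(s_m)$ is the transposition $\sigma$ of the two adjacent factors at the crossing (by the definition of the crossing in \eqref{eq:braid}). With $P:=\phi(T_m)$ equal to the extremal projection by the inductive hypothesis, I then have $\phi(T_{m+1})=(P\otimes\id_V)\,\sigma\,(P\otimes\id_V)$. The right-hand factor $P\otimes\id_V$ retains only those $v_\epsilon$ whose first $m$ signs are constant, so the relevant input vectors are $v_{+\cdots +}\otimes v_\pm$ and $v_{-\cdots -}\otimes v_\pm$. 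On $v_{+\cdots +}\otimes v_+=v_{+\cdots +}$ and $v_{-\cdots -}\otimes v_-=v_{-\cdots -}$ the swap $\sigma$ acts trivially, so these are fixed; on $v_{+\cdots +}\otimes v_-$ (resp. $v_{-\cdots -}\otimes v_+$) the swap moves a minus (resp. plus) sign into one of the first $m$ positions, producing a vector whose first $m$ signs are no longer constant (this is exactly where $m\ge 2$ enters), which the outer $P\otimes\id_V$ then annihilates. Hence $\phi(T_{m+1})$ fixes $v_{+\cdots +}$ and $v_{-\cdots -}$ and kills all other basis vectors, closing the induction. Since the extremal projection is invariant under permuting tensor factors, the conclusion is insensitive to the left/right convention for $\iota$.

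The computation is elementary once the generators are identified; the step requiring the most care is the bookkeeping in the base case $T_2$, where the factors $i^{\pm 1}$ in $\phi(\wrap)$ must be propagated correctly through $\phi(U_0)$ to witness the cancellation of the two middle basis vectors. A secondary point is to pin down the identifications $\phi(\iota(T_m))=\phi(T_m)\otimes\id_V$ and $\phi(s_m)=\sigma$ from the diagrammatics, after which the inductive step is immediate. As a bonus, idempotency of $T_m$ then follows formally: the linear map just described is visibly idempotent, and faithfulness of $\phi$ on $\essATL$ (Theorem~\ref{thm:faithfulness}) transports this back to $T_m$.
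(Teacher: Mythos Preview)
Your proposal is correct and follows essentially the same approach as the paper's own proof: induction on $m$ with explicit base cases $m=1,2$ computed from $T_2=\id_2+\tfrac12 U_1+\tfrac12 U_0$, and an inductive step that unpacks $\phi(T_{m+1})=(\phi(T_m)\otimes\id_V)\,\sigma\,(\phi(T_m)\otimes\id_V)$ and checks it on the relevant basis vectors. The only differences are cosmetic---you add the remark on insensitivity to the left/right convention for $\iota$ and the observation that idempotency of $T_m$ follows from faithfulness of $\phi$---while the paper records the explicit values of $\phi(U_1)$ and $\phi(U_0)$ on $v_{\pm\mp}$ in the $m=2$ step.
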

\begin{proof} For $m=1$ this is tautological and for $m=2$ we use the expression $T_2=\id_2+U_1/2 + U_0/2$ to compute:
\begin{align*}
\phi(T_2)\colon v_{\pm \pm} & \mapsto v_{\pm \pm} + 0 + 0 =v_{\pm \pm} \\
 v_{\pm \mp} &\mapsto v_{\pm \mp} +(v_{\mp \pm}- v_{\pm \mp})/2 + (- v_{\mp \pm}-v_{\pm \mp})/2=0
\end{align*}
For the induction step, we see immediately from the recursion that $\phi(T_{m+1})$ annihilates $v_{\epsilon_1 \epsilon_2\cdots \epsilon_{m+1}}$ unless $\epsilon_1=\cdots=\epsilon_{m}$. In the remaining cases we have:
\begin{align*}
\phi(\iota(T_m)s_m\iota(T_m))(v_{\pm\cdots \pm \mp}) &= \phi(\iota(T_m)s_m) (v_{\pm\cdots \pm \mp}) =\phi(\iota(T_m))(v_{\pm\cdots \mp \pm})=0\\
\phi(\iota(T_m)s_m\iota(T_m))(v_{\pm\cdots \pm \pm}) &= \phi(\iota(T_m)s_m) (v_{\pm\cdots \pm \pm}) =\phi(\iota(T_m))(v_{\pm\cdots \pm \pm})=v_{\pm\cdots \pm \pm}
\end{align*} 
So $\phi(T_{m+1})$ is the extremal weight projector.
\end{proof}

Theorem \ref{thm:extweightproj}, combined with Theorem \ref{thm:faithfulness}, shows that $\essATL$ describes weight-preserving maps for tensor products of $\slnn{2}$-representations, and that the idempotents $T_m$ realize extremal weight projectors.

\begin{lemma} \label{lem:technical}
  The elements $T_m$ satisfy the following properties: 
  \begin{enumerate}
  \item $T_m^2=T_m$
  \item $T_m s_i= s_i T_m=T_m$ for $m\geq 2$.
  \item \label{item:3} $T_m U_i= U_i T_m=0$ for $m\geq 2$ 
  \item \label{item:4} $T_m \iota^{m-n}(T_n) = \iota^{m-n}(T_n) T_m = T_m$ for $1\leq n< m$.
  \item $\wrapi T_m \wrap=T_m$.
  \end{enumerate}
\end{lemma}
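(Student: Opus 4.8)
The plan is to deduce all five identities from the faithfulness of $\phi$ on $\essATL$ (Theorem~\ref{thm:faithfulness}): since $\phi$ is injective on every morphism space, an equality $X=Y$ in $\essATL$ holds as soon as the linear maps $\phi(X)$ and $\phi(Y)$ coincide. The only input I need about $T_m$ is the completely explicit description of $\phi(T_m)$ coming from Theorem~\ref{thm:extweightproj} and its proof: on the standard basis $\{v_\epsilon = v_{\epsilon_1}\otimes\cdots\otimes v_{\epsilon_m}\}$ of $V^{\otimes m}$, the map $\phi(T_m)$ fixes the two extremal vectors $v_{+\cdots+}$ and $v_{-\cdots-}$ and annihilates every other $v_\epsilon$ (i.e. every $\epsilon$ that is not constant). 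Each identity is then verified after applying $\phi$, by evaluating both sides on this basis.

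For (1), $\phi(T_m)$ is a projection, hence an idempotent linear map, so $\phi(T_m^2)=\phi(T_m)\phi(T_m)=\phi(T_m)$, and faithfulness gives $T_m^2=T_m$. For (2), $\phi(s_i)$ is the transposition of the $i$-th and $(i+1)$-st tensor factors; it fixes the extremal vectors and permutes the non-constant basis vectors among themselves, so I will check that both composites $\phi(T_m)\phi(s_i)$ and $\phi(s_i)\phi(T_m)$ send $v_\epsilon\mapsto v_\epsilon$ when $\epsilon$ is constant and $v_\epsilon\mapsto 0$ otherwise, i.e. agree with $\phi(T_m)$ (this settles both orders, independently of the composition convention). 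For (3), I will use the crossing resolution $s_i=\id+U_i$ of \eqref{eq:braid}: from (2), $T_m=T_m s_i=T_m+T_m U_i$ forces $T_m U_i=0$, and symmetrically $U_i T_m=0$. Alternatively one checks directly that $\phi(U_i)$ annihilates the extremal vectors (the cap sends $v_\pm\otimes v_\pm\mapsto 0$) and has image in the antisymmetric part of the $i,i{+}1$ factors, which lies in $\ker\phi(T_m)$.

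For (4), monoidality of $\phi$ identifies $\phi(\iota^{m-n}(T_n))=\phi(T_n)\otimes\id_{V^{\otimes(m-n)}}$, the projector onto the extremal space of the first $n$ factors; on $v_\epsilon$ it acts as the identity when $\epsilon_1=\cdots=\epsilon_n$ and as $0$ otherwise. Since $\epsilon$ constant on all $m$ factors implies $\epsilon$ constant on the first $n$, evaluating on the basis shows that pre- or post-composing $\phi(T_m)$ with this projector reproduces $\phi(T_m)$. For (5), I will track the scalars produced by $\phi(\wrap)$: it sends $v_\epsilon$ to $i^{\epsilon_1}$ times the cyclic shift of $v_\epsilon$, so it preserves the extremal space, acting by $i$ on $v_{+\cdots+}$ and by $i^{-1}$ on $v_{-\cdots-}$; as $\phi(\wrapi)=\phi(\wrap)^{-1}$ undoes exactly these phases and shifts, the conjugate $\phi(\wrapi)\phi(T_m)\phi(\wrap)$ agrees with $\phi(T_m)$ on every $v_\epsilon$ (the phases cancel on the extremal vectors, and both sides vanish on the rest).

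All the computations are elementary linear algebra on the standard basis once the explicit form of $\phi(T_m)$ is in hand. The only places demanding genuine care are (5), where the phase factors $i^{\pm1}$ and the cyclic shift must be tracked simultaneously to see that conjugation by $\phi(\wrap)$ fixes $\phi(T_m)$, and (4), where one must correctly identify $\phi(\iota^{m-n}(T_n))$ through the symmetric monoidal structure; neither is a real obstacle.
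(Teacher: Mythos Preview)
Your proposal is correct and follows exactly the paper's approach: apply the faithful functor $\phi$ and verify each identity in $\reph$ using the explicit description of $\phi(T_m)$ as the extremal weight projector. Your treatment is in fact more detailed than the paper's, which only sketches item~(3) as a representative example; your handling of the phase factors in (5) and the identification of $\phi(\iota^{m-n}(T_n))$ in (4) are both fine.
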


\begin{proof}
The proof simply consists in applying $\phi$ and checking that the result holds in $\reph$. For example, $T_m U_i=0$ holds since $U_i$ contains a cup morphism, whose image under $\phi$ maps $1\mapsto v_{+-}-v_{-+}$, which is clearly annihilated by the extremal weight projector.
\end{proof}

We leave it to the interested reader to prove these properties diagrammatically inside $\essATL$.

\subsection{Chebyshev recursion}
\label{sec:chebrec}
We now explicitly show that the projectors $T_m$ categorify the Chebyshev polynomials of the first kind in the same sense as the classical Jones-Wenzl $P_m$ projectors categorify the Chebyshev polynomials of the second kind. Note that a different approach to the categorification of the polynomial ring and to orthogonal polynomials appears in \cite{KSa}.

\begin{definition} The split Grothendieck  group of an additive category $\mathcal{C}$ is the abelian group $K_0(\mathcal{C})$ defined as the quotient of the free abelian group spanned by the isomorphism classes $[X]$ of objects $X$ of $\mathcal{C}$, modulo the ideal generated by relations of the form $[A\oplus B]=[A]+[B]$ for objects $A$, $B$ of $\mathcal{C}$. 

If $\mathcal{C}$ is monoidal, then $K_0(\mathcal{C})$ inherits a unital ring structure with multiplication $[A]\cdot[B]:=[A\otimes B]$.
\end{definition}
We have already mentioned the classical fact that $\reprs$ is isomorphic to the polynomial ring $\Z[X]$ generated by the class $X=[V]$ of the vector representation. The isomorphisms
\[\sym^m(V)\otimes V \cong \sym^{m+1}(V) \oplus \sym^{m-1}(V) \text{ for } m\geq 1\] 
imply that classes of the simple representations are given by the Chebyshev polynomials of the second kind $[\sym^m V]=J_m \in \Z[X]$. This can also be seen in the Grothendieck group of $\TL$, at the cost of passing to the Karoubi envelope. 

\begin{definition}
The Karoubi envelope of a category $\mathcal{C}$ is the category $Kar(\mathcal{C})$ with objects given by pairs $(X,e)$, where $X$ is an object of $\mathcal{C}$ and $e\in \Hom_\mathcal{C}(X,X)$ an idempotent. Morphisms between $(X,e)$ and $(Y,f)$ are of the form $f\circ g \circ e$ with $g\in \Hom_\mathcal{C}(X,Y)$. If $\mathcal{C}$ is additive or monoidal, then $Kar(\mathcal{C})$ inherits these structures.
\end{definition}
Recall that $\TL$ is equivalent to the full subcategory of $\rep$ with objects given by $V^{\otimes m}$. Since any objects of $\rep$ appears as a direct summand of some $V^{\otimes m}$, it can be picked out by an idempotent endomorphism of $V^{\otimes m}$. Thus $\Kar(\TL)$ is equivalent to $\rep$ and $K_0(\Kar(\TL))\cong \Z[X]$. Analogously, $\Kar(\essATL)$ is equivalent to $\reph$. We now explicitly compute that the classes of the Jones-Wenzl projectors satisfy the recursion relation of the Chebyshev polynomials of the second kind.
\begin{lemma} \label{lem:K0JW}
  In $K_0(Kar(\TL))$, we have $[P_m][P_1]=[P_m \otimes P_1]=[P_{m+1}]+[P_{m-1}]$. Here and in the following, we abuse notation and write $P_m$ for the object of $\Kar(\TL)$ given by the pair $(m,P_m)$. 
\end{lemma}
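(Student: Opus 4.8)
The plan is to establish the isomorphism $P_m \otimes P_1 \cong P_{m+1} \oplus P_{m-1}$ directly in $\Kar(\TL)$ and then pass to $K_0$. Since the Karoubi envelope records idempotents as objects, what I need is a pair of orthogonal idempotents in $\End_{\TL}(P_m \otimes P_1) = \End_{\TL}(m+1)$ restricted by $P_m \otimes \id_1$, together with the identification of the summands they cut out as $P_{m+1}$ and $P_{m-1}$.

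First I would recall the classical Jones--Wenzl absorption property: $P_{m+1}$ is an idempotent in $\TL_{m+1}$ satisfying $P_{m+1}(P_m \otimes \id_1) = P_{m+1} = (P_m\otimes\id_1)P_{m+1}$, so that $P_{m+1}$ factors through $P_m\otimes P_1$ and hence defines a direct summand of that object in $\Kar(\TL)$. This exhibits $[P_{m+1}]$ as a summand of $[P_m][P_1]$. For the complementary summand, the Jones--Wenzl recursion \eqref{eq:JWrec}, written as $P_{m+1} = (P_m\otimes\id_1) - \tfrac{m}{m+1}\,(P_m\otimes\id_1)\,U_m\,(P_m\otimes\id_1)$ (up to the normalization conventions of the paper at $q=1$), shows that $e := (P_m\otimes\id_1) - P_{m+1}$ is the second idempotent. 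The key step is to identify the object $(m+1, e)$ with $P_{m-1}$: I would verify that the cap-cup $U_m$ sandwiched between copies of $P_m$ supplies an explicit isomorphism between $(m+1,e)$ and $P_{m-1}$, using that $P_m$ absorbs $P_{m-1}\otimes\id_1$ and that the turnback $U_m$ composed with $P_m$ on both sides is proportional to a nonzero multiple of the inclusion/projection realizing $P_{m-1}$ as a summand. Concretely, the maps $P_{m-1} \hookrightarrow P_m\otimes P_1$ and its retraction, built from $U_m$ and the Jones--Wenzl absorption, compose to a nonzero scalar times the identity on $P_{m-1}$ and to $e$ the other way.

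Having produced orthogonal idempotents $P_{m+1}$ and $e$ with $P_{m+1} + e = P_m\otimes\id_1$ and $P_{m+1}\, e = e\, P_{m+1} = 0$, I conclude that there is a direct sum decomposition $P_m\otimes P_1 \cong P_{m+1}\oplus P_{m-1}$ in $\Kar(\TL)$. Applying $K_0$ and the defining relation $[A\oplus B]=[A]+[B]$ then yields $[P_m][P_1] = [P_m\otimes P_1] = [P_{m+1}] + [P_{m-1}]$, as claimed.

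The main obstacle is the bookkeeping of normalizations: at $q=1$ with the loop value $-2$, the Jones--Wenzl coefficients and the scalar by which the turnback-sandwich acts on $P_{m-1}$ must be tracked carefully to ensure the relevant scalar is nonzero (so that $e$ genuinely cuts out a copy of $P_{m-1}$ rather than collapsing). Equivalently, one must confirm the Jones--Wenzl projectors $P_m$ are well-defined and nonzero in this $q=1$, loop-value-$(-2)$ setting, which is exactly the regime where $\TL$ models $\rep$; since the paper has already identified $\Kar(\TL)\simeq\rep$ and $[\Sym^m V] = J_m$, I may alternatively read off the result representation-theoretically from $\Sym^m(V)\otimes V\cong \Sym^{m+1}(V)\oplus\Sym^{m-1}(V)$, but the diagrammatic argument above is what makes the categorified statement self-contained.
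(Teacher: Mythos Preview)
Your proposal is correct and follows essentially the same route as the paper: decompose $P_m\otimes\id_1$ into the orthogonal idempotents $P_{m+1}$ and $e=-\tfrac{m}{m+1}(P_m\otimes\id_1)U_m(P_m\otimes\id_1)$ via the Jones--Wenzl recursion, then exhibit the isomorphism $(m+1,e)\cong(m-1,P_{m-1})$ using the cap and cup maps, with the verification resting on absorption and the partial trace formula $\pTr(P_m)=-\tfrac{m+1}{m}P_{m-1}$. The only slip is a sign: in the paper's conventions the recursion has $+\tfrac{m}{m+1}$, not $-\tfrac{m}{m+1}$, but you already flag this as a normalization issue and it does not affect the argument.
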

\begin{proof} 
  We rewrite the recurrence~\eqref{eq:JWrec} by subtracting the term with two projectors from both sides. The result is a decomposition of the idempotent $P_m\otimes P_1$ into a sum of orthogonal idempotents in $\TL$, which induces an isomorphism between objects in $\Kar(\TL)$. After applying $K_0$, we get: 
  \[
    \left[
       \begin{tikzpicture}[anchorbase, scale=.3]
\fill[black,opacity=.2] (0,1) rectangle (2,3);
\draw[thick] (0,1) rectangle (2,3);
%% web edges - bottom
\draw [very thick] (.5,0) to (.5,1);
\draw [thick, dotted] (.7,0.5) to (1.3,.5);
\draw [very thick] (1.5,0) to (1.5,1);
\draw [very thick] (2.5,0) to (2.5,4);
%% web edges - top
\draw [very thick] (.5,3) to (.5,4);
\draw [thick, dotted] (.7,3.5) to (1.3,3.5);
\draw [very thick] (1.5,3) to (1.5,4);
% J_{m+1}
\node at (1,1.9) {$P_{m}$};
\end{tikzpicture}\,
      \right]
   \;\; = \;\;
    \left[
      \begin{tikzpicture}[anchorbase, scale=.3]
\fill[black,opacity=.2] (0,1) rectangle (3,3);
\draw[thick] (0,1) rectangle (3,3);
%% web edges - bottom
\draw [very thick] (.5,0) to (.5,1);
\draw [thick, dotted] (.7,0.5) to (1.3,.5);
\draw [very thick] (1.5,0) to (1.5,1);
\draw [very thick] (2.5,0) to (2.5,1);
%% web edges - top
\draw [very thick] (.5,3) to (.5,4);
\draw [thick, dotted] (.7,3.5) to (1.3,3.5);
\draw [very thick] (1.5,3) to (1.5,4);
\draw [very thick] (2.5,3) to (2.5,4);
% J_{m+1}
\node at (1.5,1.9) {$P_{m+1}$};
\end{tikzpicture}
      \right]
    \;+\;
    \left[-\frac{m}{m+1}
       \begin{tikzpicture}[anchorbase, scale=.3]
\fill[black,opacity=.2] (0,.5) rectangle (2,1.5);
\draw[thick] (0,.5) rectangle (2,1.5);
\fill[black,opacity=.2] (0,2.5) rectangle (2,3.5);
\draw[thick] (0,2.5) rectangle (2,3.5);
% web edges - bottom
\draw [very thick] (.5,0) to (.5,.5);
\draw [thick, dotted] (.7,0.25) to (1.3,.25);
\draw [very thick] (1.5,0) to (1.5,.5);
% web edges - middle
\draw [very thick] (2.5,0) to (2.5,1.5)to [out=90,in=90] (1.5,1.5);
\draw [very thick] (1.5,2.5) to [out=270,in=270] (2.5,2.5) to (2.5,4); 
\draw [very thick] (.5,1.5) to (.5,2.5);
\draw [thick, dotted] (.7,2) to (1.3,2);
%% web edges - top
\draw [very thick] (.5,3.5) to (.5,4);
\draw [thick, dotted] (.7,3.75) to (1.3,3.75);
\draw [very thick] (1.5,3.5) to (1.5,4);
% J_{m+1}
\node at (1,.95) {\tiny$P_{m}$};
\node at (1,2.95) {\tiny$P_{m}$};
\end{tikzpicture}\,
       \right]    \]
On the left-hand side we already see $[P_m \otimes P_1]$ and the first term on the right-hand side is $[P_{m+1}]$. It now remains to prove that the idempotent shown in the second bracket on the right-hand side is isomorphic to $P_{m-1}$ in $\Kar(\TL)$. In order to avoid confusion, we return to the pair-notation in $\Kar(\TL)$. The desired isomorphism in $\Kar(\TL)$ is given by:
 \begin{equation}\label{eq:JWPiso}
      \left(
      m+1,-\frac{m}{m+1}
       \begin{tikzpicture}[anchorbase, scale=.3]
\fill[black,opacity=.2] (0,.5) rectangle (2,1.5);
\draw[thick] (0,.5) rectangle (2,1.5);
\fill[black,opacity=.2] (0,2.5) rectangle (2,3.5);
\draw[thick] (0,2.5) rectangle (2,3.5);
% web edges - bottom
\draw [very thick] (.5,0) to (.5,.5);
\draw [thick, dotted] (.7,0.25) to (1.3,.25);
\draw [very thick] (1.5,0) to (1.5,.5);
% web edges - middle
\draw [very thick] (2.5,0) to (2.5,1.5)to [out=90,in=90] (1.5,1.5);
\draw [very thick] (1.5,2.5) to [out=270,in=270] (2.5,2.5) to (2.5,4); 
\draw [very thick] (.5,1.5) to (.5,2.5);
\draw [thick, dotted] (.7,2) to (1.3,2);
%% web edges - top
\draw [very thick] (.5,3.5) to (.5,4);
\draw [thick, dotted] (.7,3.75) to (1.3,3.75);
\draw [very thick] (1.5,3.5) to (1.5,4);
% J_{m+1}
\node at (1,.95) {\tiny$P_{m}$};
\node at (1,2.95) {\tiny$P_{m}$};
\end{tikzpicture}
       \right)
       \;
\xy
(0,0)*{ 
\begin{tikzpicture}[anchorbase, scale=.3]
\draw[->] (-3,.2)to(3,.2);
\draw[<-] (-3,-.2)to(3,-.2);
\end{tikzpicture}
};
(0,8)*{ 
\begin{tikzpicture}[anchorbase, scale=.3]
\fill[black,opacity=.2] (0,.5) rectangle (2,1.5);
\draw[thick] (0,.5) rectangle (2,1.5);
% web edges - bottom
\draw [very thick] (.5,0) to (.5,.5);
\draw [thick, dotted] (.7,0.25) to (1.3,.25);
\draw [very thick] (1.5,0) to (1.5,.5);
% web edges - middle
\draw [very thick] (2.5,0) to (2.5,1.5)to [out=90,in=90] (1.5,1.5);
\draw [very thick] (.5,1.5) to (.5,2.5);
\draw [thick, dotted] (.7,1.8) to (1.3,1.8);
% J_{m+1}
\node at (1,.95) {\tiny$P_{m}$};
       \end{tikzpicture}
};
(0,-8)*{
-\frac{m}{m+1}
       \begin{tikzpicture}[anchorbase, scale=.3]
\fill[black,opacity=.2] (0,2.5) rectangle (2,3.5);
\draw[thick] (0,2.5) rectangle (2,3.5);
% web edges - middle
\draw [very thick] (1.5,2.5) to [out=270,in=270] (2.5,2.5) to (2.5,4); 
\draw [very thick] (.5,1.5) to (.5,2.5);
\draw [thick, dotted] (.7,2) to (1.3,2);
%% web edges - top
\draw [very thick] (.5,3.5) to (.5,4);
\draw [thick, dotted] (.7,3.75) to (1.3,3.75);
\draw [very thick] (1.5,3.5) to (1.5,4);
% J_{m+1}
\node at (1,2.95) {\tiny$P_{m}$};
\end{tikzpicture}
}; 
\endxy      
\;
\left(m-1,
       \begin{tikzpicture}[anchorbase, scale=.3]
\fill[black,opacity=.2] (-.25,1) rectangle (2.25,3);
\draw[thick] (-.25,1) rectangle (2.25,3);
%% web edges - bottom
\draw [very thick] (.5,0) to (.5,1);
\draw [thick, dotted] (.7,0.5) to (1.3,.5);
\draw [very thick] (1.5,0) to (1.5,1);
%% web edges - top
\draw [very thick] (.5,3) to (.5,4);
\draw [thick, dotted] (.7,3.5) to (1.3,3.5);
\draw [very thick] (1.5,3) to (1.5,4);
% J_{m+1}
\node at (1,1.9) {\tiny$P_{m-1}$};
\end{tikzpicture}
       \right)        
\end{equation}
The verification that these maps give an isomorphism in $\Kar(TL)$ uses the fact that $P_m\circ (P_{m-1}\otimes P_1)=P_m = (P_{m-1}\otimes P_1)\circ P_m$ in $\TL$ as well as the partial trace formula:
    \[
 \begin{tikzpicture}[anchorbase, scale=.3]
\fill[black,opacity=.2] (0,1) rectangle (3,3);
\draw[thick] (0,1) rectangle (3,3);
%% web edges - bottom
\draw [very thick] (.5,0) to (.5,1);
\draw [thick, dotted] (.7,0.5) to (1.3,.5);
\draw [very thick] (1.5,0) to (1.5,1);
%% web edges - top
\draw [very thick] (.5,3) to (.5,4);
\draw [thick, dotted] (.7,3.5) to (1.3,3.5);
\draw [very thick] (1.5,3) to (1.5,4);
\draw [very thick] (2.5,3) to (2.5,3.5) to [out=90,in=90] (3.5,3.5) to (3.5,.5) to [out=270,in=270] (2.5,.5) to (2.5,1);
% J_{m+1}
\node at (1.5,1.9) {$P_{m}$};
\end{tikzpicture}
\;=\;-\frac{m+1}{m} \;
 \begin{tikzpicture}[anchorbase, scale=.3]
\fill[black,opacity=.2] (-.25,1) rectangle (2.25,3);
\draw[thick] (-.25,1) rectangle (2.25,3);
%% web edges - bottom
\draw [very thick] (.5,0) to (.5,1);
\draw [thick, dotted] (.7,0.5) to (1.3,.5);
\draw [very thick] (1.5,0) to (1.5,1);
%% web edges - top
\draw [very thick] (.5,3) to (.5,4);
\draw [thick, dotted] (.7,3.5) to (1.3,3.5);
\draw [very thick] (1.5,3) to (1.5,4);
% J_{m+1}
\node at (1,1.9) {\tiny$P_{m-1}$};
\end{tikzpicture}
\]
which can be proved by induction on $m$ using the defining recursion of $P_m$.
\end{proof}

In order to prove that the extremal weight projectors categorify the Chebyshev polynomials $L_m$, we need an analogous partial trace formula for the $T_m$. We define the partial trace $\pTr\colon \ATL_{n+1}\to \ATL_{n}$ on diagrams $W$ by:
\[ 
\begin{tikzpicture}[anchorbase, scale=.4]
%% stuff inside 
\draw[thick] (0,0) circle (2.5);
\fill[black,opacity=.2] (0,0) circle (2.5);
\draw[thick,fill=white] (0,0) circle (1.5);
\draw[dotted] (-1.93,1.93) to [out=45,in=180] (0,2.75) to [out=0,in=135] (1.93,1.93);
\draw[dotted] (-0.88,0.88) to [out=45,in=180] (0,1.25) to [out=0,in=135] (0.88,0.88);
%% web edges
\draw [very thick] (.8,.6) to (1.2,.9);
\draw [very thick] (-.8,.6) to (-1.2,.9);
\draw [very thick] (2,1.5) to (2.4,1.8);
\draw [very thick] (-2,1.5) to (-2.4,1.8);
%%% special edges
\draw [white, line width=.15cm] (1.2,-.3) to [out=270,in=180] (1.5,-.6) to (2.5,-.6) to [out=0,in=270] (2.8,-.3) ;
\draw [very thick] (1.5,0) to [out=180,in=90] (1.2,-.3) to [out=270,in=180] (1.5,-.6) to (2.5,-.6) to [out=0,in=270] (2.8,-.3) to [out=90,in=0] (2.5,0);
% boundary markings
\node at (0,-1) {$*$};
\node at (0,-3) {$*$};
\draw [dashed] (0,-1) to (0,-3);
%% Inner boundary circle
\draw (0,0) circle (1);
%% Outer boundary circle
\draw (0,0) circle (3);
\node at (0,1.95) {$W$};
\end{tikzpicture}
\]

\begin{lemma} If we set $T_0=2\,\id_0$, then for $m\geq 1$ we have:
 \begin{equation} \label{eq:pTr}
  \pTr(T_m)=- T_{m-1}.
\end{equation}
\end{lemma}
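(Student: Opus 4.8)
The plan is to verify the identity after applying the faithful functor $\phi\colon\essATL\to\reph$ of Theorem~\ref{thm:faithfulness}. Since both $\pTr(T_m)$ and $-T_{m-1}$ lie in $\essATL_{m-1}$, it suffices to show that their images agree in $\Hom_\reph(V^{\otimes(m-1)},V^{\otimes(m-1)})$. The first task is to interpret the diagrammatic partial trace representation-theoretically. In the picture defining $\pTr$, the closure arc joins the last point of the domain to the last point of the codomain by an arc that stays to the right of the basepoint cut, hence does not wrap around the annulus. Consequently $\phi(\pTr(W))$ is the ordinary right partial trace of $\phi(W)$ over the last tensor factor, formed from the cap and cup maps of $\phi$ with no intervening power of $\phi(\wrap)$. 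I would record the sanity check $\phi(\pTr(\id_1)) = \phi(\text{cap})\circ\phi(\text{cup})(1) = \phi(\text{cap})(v_+\otimes v_- - v_-\otimes v_+) = -2$, which matches $-\phi(T_0) = -2\,\id_\C$; an \emph{essential} closure of $\id_1$ would instead produce an essential circle, which vanishes by Proposition~\ref{prop:nff}, so this also confirms that the relevant closure is inessential.

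For the main computation, recall from Theorem~\ref{thm:extweightproj} that $\phi(T_m)$ is the projector $P$ onto $\C\la v_{+\cdots+}, v_{-\cdots-}\ra\subset V^{\otimes m}$. Writing $V^{\otimes m} = V^{\otimes(m-1)}\otimes V$ and evaluating the partial trace on a weight basis vector $v_\epsilon\in V^{\otimes(m-1)}$, I would feed $v_\epsilon$ together with $\phi(\text{cup})(1) = v_+\otimes v_- - v_-\otimes v_+$ through $P\otimes\id_V$ and then apply the cap. Since $P(v_\epsilon\otimes v_\pm)$ is nonzero only when $v_\epsilon\otimes v_\pm$ equals $v_{+\cdots+}$ or $v_{-\cdots-}$, the trace vanishes unless $\epsilon = (+)^{m-1}$ or $\epsilon=(-)^{m-1}$. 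In the two surviving cases the cap contributes $\phi(\text{cap})(v_+\otimes v_-) = -1$ and $-\phi(\text{cap})(v_-\otimes v_+) = -1$ respectively, the leading minus in the second case coming from the cup, so that in both cases $\phi(\pTr(T_m))(v_\epsilon) = -v_\epsilon$. Thus $\phi(\pTr(T_m))$ is the negative of the projection onto $\C\la v_{+\cdots+}, v_{-\cdots-}\ra\subset V^{\otimes(m-1)}$, which by Theorem~\ref{thm:extweightproj} equals $-\phi(T_{m-1})$ for $m\geq 2$, and equals $-2\,\id_\C = -\phi(T_0)$ for $m=1$.

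Having matched the two images, faithfulness of $\phi$ yields $\pTr(T_m) = -T_{m-1}$ in $\essATL_{m-1}$. I expect the only genuinely delicate point to be the first step: justifying that $\phi$ intertwines the diagrammatic $\pTr$ with the honest partial trace and, in particular, that the closure contributes no twist by $\phi(\wrap)$. This hinges on the purely topological observation that the closure arc in the defining picture is inessential, which is pinned down unambiguously by the base case $m=1$; everything else is the short linear-algebra computation above. One may alternatively avoid faithfulness and argue diagrammatically by induction, using the recursion $T_{m+1} = \iota(T_m)\,s_m\,\iota(T_m)$ together with the relations of Lemma~\ref{lem:technical}, but the representation-theoretic route is shorter.
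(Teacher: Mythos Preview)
Your argument is correct. The identification of the diagrammatic $\pTr$ with the ordinary right partial trace under $\phi$ is justified exactly as you indicate: $\pTr(W)=(\id_{m-1}\otimes\text{cap})\circ(W\otimes\id_1)\circ(\id_{m-1}\otimes\text{cup})$ is a composition of generators in $\essATL$, none of which is a power of $\wrap$, so functoriality and monoidality of $\phi$ give the claimed formula. Your base-case check $\pTr(\id_1)=-2$ pins this down, and the weight-space computation is clean and accurate.

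The paper, however, argues diagrammatically rather than via faithfulness. It verifies $m=1,2$ by hand and, for $m\geq 3$, uses the recursion $T_m=\iota(T_{m-1})\,s_{m-1}\,\iota(T_{m-1})$: since $\iota(T_{m-1})=T_{m-1}\otimes\id_1$ is the identity on the closed strand, the cap and cup slide past the two projectors and meet $s_{m-1}$, where the Reidemeister~I relation of Lemma~\ref{lem:Reid} gives a factor $-1$, yielding $T_{m-1}\circ(-\id)\circ T_{m-1}=-T_{m-1}$. Your route is entirely in the spirit of the paper's ``apply $\phi$ and check'' philosophy (used e.g.\ in Lemmas~\ref{lem:technical} and~\ref{lem:linkedproj}), and has the advantage of a uniform argument for all $m$ without separate base cases. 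The paper's route is logically more economical, as it does not invoke Theorem~\ref{thm:faithfulness} and keeps the identity visibly diagrammatic; you essentially anticipate this in your final paragraph.
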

\begin{proof} This is immediate for $m=1$ and $m=2$. For $m\geq 3$ it follows from the recursion \eqref{eq:recursive} and the Reidemeister I relation from Lemma~\ref{lem:Reid}.
\end{proof}

\begin{lemma}
  In $K_0(Kar(\essATL)$, we have the following:
  \begin{align*}
    [T_{1}][T_1]&=[T_{1}\otimes T_1]=[T_2]+2[\id_0]\quad \text{and} \quad \\
    [T_{m}][T_1]&=[T_{m}\otimes T_1]= [T_{m+1}]+ [T_{m-1}] \;\; \forall m\geq 2.
    \end{align*}
\end{lemma}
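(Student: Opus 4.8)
The plan is to realize each product $T_m\otimes T_1$ as an idempotent in $\essATL_{m+1}$ and to split it as an orthogonal sum of two idempotents whose $K_0$-classes are the two terms on the right. Since $T_1=\id_1$, we have $T_m\otimes T_1=\iota(T_m)$, which is idempotent because $\iota$ is a monoidal functor and $T_m^2=T_m$ by Lemma~\ref{lem:technical}. For $m\ge 2$ the recursion of Definition~\ref{def:T} reads $T_{m+1}=\iota(T_m)s_m\iota(T_m)$, and resolving the crossing $s_m=\id+U_m$ via \eqref{eq:braid} gives $T_{m+1}=\iota(T_m)+\iota(T_m)U_m\iota(T_m)$. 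Hence, setting $e:=\iota(T_m)-T_{m+1}=-\iota(T_m)U_m\iota(T_m)$, we obtain $\iota(T_m)=T_{m+1}+e$. Using $T_{m+1}\iota(T_m)=\iota(T_m)T_{m+1}=T_{m+1}$ from Lemma~\ref{lem:technical}(\ref{item:4}) together with $T_{m+1}^2=T_{m+1}$, one checks immediately that $T_{m+1}$ and $e$ are orthogonal idempotents. Passing to $K_0(\Kar(\essATL))$ therefore yields $[T_m\otimes T_1]=[T_{m+1}]+[e]$, and it remains to identify $[e]=[T_{m-1}]$.

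For this I would build an explicit isomorphism $(m+1,e)\cong(m-1,T_{m-1})$ in the Karoubi envelope from the partial-trace formula \eqref{eq:pTr}. Write $U_m=\mathrm{cup}_m\circ\mathrm{cap}_m$, where $\mathrm{cap}_m\colon (m+1)\to(m-1)$ joins the last two strands and $\mathrm{cup}_m\colon(m-1)\to(m+1)$ creates them. Set $\alpha:=\iota(T_m)\,\mathrm{cup}_m$ and $\beta:=-\mathrm{cap}_m\,\iota(T_m)$. Then $\alpha\beta=-\iota(T_m)U_m\iota(T_m)=e$, while
\[
\beta\alpha=-\mathrm{cap}_m\,\iota(T_m)^2\,\mathrm{cup}_m=-\mathrm{cap}_m\,\iota(T_m)\,\mathrm{cup}_m.
\]
The key geometric point is that $\mathrm{cap}_m(T_m\otimes\id_1)\mathrm{cup}_m$ straightens, via the Temperley--Lieb zig-zag (snake) relation, to the partial trace of $T_m$ over its last strand; hence $\beta\alpha=-\pTr(T_m)=T_{m-1}$ by \eqref{eq:pTr}. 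Replacing $\alpha,\beta$ by $e\alpha T_{m-1}$ and $T_{m-1}\beta e$ turns them into honest morphisms of the two idempotents, and a short computation using the relations $\alpha\beta=e$, $\beta\alpha=T_{m-1}$ and idempotency shows the corrected maps are mutually inverse. Thus $[e]=[T_{m-1}]$ and the formula $[T_m\otimes T_1]=[T_{m+1}]+[T_{m-1}]$ follows for all $m\ge2$.

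The base case $m=1$ must be treated separately, since the recursion only holds for $m\ge2$. Here $T_1\otimes T_1=\id_2$, and from $T_2=\id_2+\tfrac12U_1+\tfrac12U_0$ we obtain the complementary idempotent $e':=\id_2-T_2=-\tfrac12(U_1+U_0)$, so that $[\id_2]=[T_2]+[e']$. It remains to show $[e']=2[\id_0]$. The cleanest route is through the equivalence $\Kar(\essATL)\simeq\reph$ induced by the fully faithful functor $\phi$ (Theorem~\ref{thm:faithfulness}): by Theorem~\ref{thm:extweightproj}, $\phi(e')$ is the projection of $V\otimes V$ onto the zero-weight space $\C\la v_{+-},v_{-+}\ra$, a two-dimensional space concentrated in weight $0$, hence isomorphic in $\reph$ to two copies of the monoidal unit $\phi(\id_0)$. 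Therefore $(2,e')\cong(0,\id_0)^{\oplus2}$ and $[e']=2[\id_0]$. (Alternatively, $e'$ splits diagrammatically into two rank-one idempotents built from the two cups $w_1,w_2$ appearing in the proof of Theorem~\ref{thm:faithfulness}, each isomorphic to $\id_0$.)

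The main obstacle is the identification step of the second paragraph: one must verify carefully that $\mathrm{cap}_m(T_m\otimes\id_1)\mathrm{cup}_m$ really equals $\pTr(T_m)$ --- that is, that straightening the auxiliary strand introduces no spurious scalar --- and that the maps $\alpha,\beta$, after correction, define a genuine isomorphism in $\Kar(\essATL)$ rather than merely matching numerically in $K_0$. Both points are where the sign and normalization conventions of the cups, caps and the recursion must be tracked precisely; once the snake relation is pinned down (it is the identity for the given evaluation and coevaluation, as a short $\phi$-computation confirms), the remaining verifications reduce to the algebra of Lemma~\ref{lem:technical} and the partial-trace identity \eqref{eq:pTr}.
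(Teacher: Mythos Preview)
Your argument is correct and, for $m\ge 2$, coincides with the paper's: split $T_m\otimes T_1=\iota(T_m)$ via the recursion into $T_{m+1}$ and $e=-\iota(T_m)U_m\iota(T_m)$, then exhibit the isomorphism $e\cong T_{m-1}$ by the cup/cap pair together with the partial trace formula~\eqref{eq:pTr}. The paper phrases this as ``an annular version of~\eqref{eq:JWPiso} with scalar $-1$ instead of $-\tfrac{m}{m+1}$'', which is exactly your $\alpha,\beta$.

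The only difference is the base case $m=1$. The paper splits $e'=-\tfrac12 U_1-\tfrac12 U_0$ directly into the two summands $-\tfrac12 U_1$ and $-\tfrac12 U_0$: these are orthogonal in $\essATL_2$ because $U_1U_0$ and $U_0U_1$ each contain an essential circle and hence vanish, and each is visibly isomorphic to $\id_0$ via a single cap/cup pair. Your primary route through the equivalence $\Kar(\essATL)\simeq\reph$ is valid but less elementary; your parenthetical ``alternative'' is essentially the paper's argument. Your caveat about the snake relation is well placed but harmless here: the zig-zag composite of the chosen cup and cap is the identity on the nose, so $\mathrm{cap}_m\,\iota(T_m)\,\mathrm{cup}_m=\pTr(T_m)$ without any extra scalar.
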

\begin{proof}
 For $m\geq 2$, the proof is very analogous to the one of Lemma \ref{lem:K0JW}. The defining recursion \eqref{eq:recursive} gives an orthogonal decomposition of the idempotent $T_m\otimes T_1$ into $T_{m+1}$ and an idempotent containing a cap-cup. The latter is isomorphic to $T_{m-1}$ in $\Kar(\essATL)$ through an annular version of the isomorphism in \eqref{eq:JWPiso}, although with scalar $-1$ instead of $-\frac{m}{m+1}$. The verification that this gives an isomorphism relies on the fact that $T_m$ absorbs lower order projectors, see \eqref{item:4} of Lemma~\ref{lem:technical} and the partial trace formula \eqref{eq:pTr}.
  
Finally, for $m=1$ we recall that $\id_2=T_2 -U_1/2 - U_0/2$. This gives a decomposition of $\id_2=T_1\otimes T_1$ into three orthogonal idempotents. It is also easy to check that the idempotents $-U_0/2$ and $-U_1/2$ are both isomorphic to $\id_0$ in $\Kar(\essATL)$. This finishes the proof.
\end{proof}

\begin{remark}
The representation ring of $\glnn{N}$ is isomorphic to the tensor product of two copies of $\Z[[V], [\bVn^2 V],\dots, [\bVn^N V]]$ where $V$ is the vector representation of $\glnn{N}$. It is useful to identify each copy with $\C[x_1,x_2,\dots, x_N]^{\mathfrak{S}_N}$ by sending $[\bVn^i V]$ to the elementary symmetric polynomials $e_i(x_1,x_2, \dots, x_N)$. The classes of the symmetric powers $[\sym^m V]$ then correspond to complete symmetric polynomials, the classes of the simple representation indexed by the Young diagram $\lambda$ corresponds to the Schur polynomial $s_\lambda$ and the class of the extremal weight space of $V^{\otimes m}$ corresponds to the power sum symmetric polynomial $p_m=x_1^m + x_2^m + \cdots x_N^m$. Power sum symmetric polynomials, and thus extremal weight spaces, also play an important role in the HOMFLY-PT skein algebra of the torus and its relationship to the elliptic Hall algebra \cite{MSa}.
\end{remark}

\subsection{Product formula}\label{sec:parprod} 
The subject of this section is the categorification of Equation \eqref{eq:mult}, as stated in the following theorem.
\begin{theorem}
$T_m\otimes T_n\cong T_{m+n}\oplus T_{|m-n|}$ in $\Kar(\essATL)$.
\end{theorem}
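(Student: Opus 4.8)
The plan is to leverage that $\phi$ induces an equivalence $\Kar(\essATL)\simeq\reph$, so that an isomorphism of objects in $\Kar(\essATL)$ can be detected after applying $\phi$. Indeed, Theorem~\ref{thm:faithfulness} gives full faithfulness of $\phi\colon\essATL\to\reph$, full faithfulness passes to Karoubi envelopes, and $\phi$ is essentially surjective onto $\reph$ since every finite-dimensional $\Z$-graded vector space is a direct sum of weight spaces, each of which occurs as a summand of some $V^{\otimes k}$ (this is exactly the equivalence $\Kar(\essATL)\simeq\reph$ recorded in the text). Hence it suffices to exhibit an isomorphism between $\phi(T_m\otimes T_n)$ and $\phi(T_{m+n})\oplus\phi(T_{|m-n|})$ in $\reph$.

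First I would record the image of a single projector. By Theorem~\ref{thm:extweightproj}, the object $(m,T_m)$ of $\Kar(\essATL)$ is sent by $\phi$ to the extremal weight space $\C\la v_{+\cdots+},v_{-\cdots-}\ra\subset V^{\otimes m}$, i.e.\ to the two-dimensional graded vector space $\C_{m}\oplus\C_{-m}$, where $\C_{w}$ denotes the one-dimensional $\h$-representation of weight $w$. Since $\phi$ is monoidal and weights add under tensor product,
\[
\phi(T_m\otimes T_n)\cong(\C_{m}\oplus\C_{-m})\otimes(\C_{n}\oplus\C_{-n})\cong \C_{m+n}\oplus\C_{m-n}\oplus\C_{n-m}\oplus\C_{-m-n},
\]
whereas
\[
\phi(T_{m+n})\oplus\phi(T_{|m-n|})\cong(\C_{m+n}\oplus\C_{-m-n})\oplus(\C_{|m-n|}\oplus\C_{-|m-n|}).
\]
As $\{|m-n|,-|m-n|\}=\{m-n,n-m\}$, the two graded vector spaces coincide, and the desired isomorphism in $\reph$ --- hence in $\Kar(\essATL)$ --- follows at once.

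If one instead wants a proof internal to the diagrammatic category, mirroring Lemma~\ref{lem:K0JW} and the preceding Chebyshev lemma, I would split the idempotent $T_m\otimes T_n$ directly. Assuming $m\ge n$, absorption (Lemma~\ref{lem:technical}\eqref{item:4}) shows that $T_{m+n}$ is a sub-idempotent of $T_m\otimes T_n$, so that $e:=T_m\otimes T_n-T_{m+n}$ is a complementary orthogonal idempotent and $T_m\otimes T_n\cong T_{m+n}\oplus e$ in $\Kar(\essATL)$. It then remains to identify $e\cong T_{m-n}$ by constructing explicit morphisms $(m-n,T_{m-n})\to(m+n,e)$ and back, obtained by capping off $n$ pairs of strands in analogy with \eqref{eq:JWPiso}, and checking the two composition identities using absorption and the partial trace formula \eqref{eq:pTr}. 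Alternatively one can induct on $n$, bootstrapping from the already-established case $n=1$ by analyzing $T_m\otimes T_n\otimes T_1$ and cancelling common summands, the cancellation being justified since $\reph$ is Krull--Schmidt.

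The main obstacle is confined to this second, explicit route: verifying that the capping morphisms realize $e\cong T_{m-n}$ requires keeping track of the scalars accumulated by the $n$ nested cap--cup pairs and confirming orthogonality, which is where \eqref{eq:pTr} does the real work. By contrast, in the representation-theoretic approach there is essentially no obstacle once full faithfulness is in hand; the only point deserving a line of justification is the essential surjectivity of $\phi$ onto $\reph$, which is what allows an isomorphism in $\reph$ to be reflected back to $\Kar(\essATL)$.
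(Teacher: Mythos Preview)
Your primary (representation-theoretic) argument is correct, and it is genuinely different from the paper's own proof. The paper proceeds entirely inside $\essATL$: it first writes $T_m\otimes T_n=T_{m+n}+e_{m,n}$ as an orthogonal idempotent decomposition (using the linked-projector identity $(T_m\otimes T_n)s_m(T_m\otimes T_n)=T_{m+n}$ rather than absorption alone), then gives several alternative diagrammatic expressions for $e_{m,n}$, and finally exhibits explicit inverse morphisms realising $e_{m,n}\cong T_{|m-n|}$, with a separate and more intricate argument when $m=n$ (splitting $e_{m,m}$ into two pieces each isomorphic to $\id_0$). By contrast, you invoke the equivalence $\Kar(\essATL)\simeq\reph$ and reduce everything to a two-line weight count. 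This is shorter and conceptually clean; what it gives up is the explicit diagrammatic isomorphisms, which the paper needs later (for instance, to verify that the isomorphisms can be chosen inside the symmetric subcategory $\sessATL$, feeding into the decategorification corollary).

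Two small remarks on your write-up. First, in the boundary case $m=n$ you silently use $\phi(T_0)\cong\C_0\oplus\C_0$; this matches the paper's convention $T_0=2\,\id_0$ (interpreted as $\id_0\oplus\id_0$ in $\Kar(\essATL)$), but since $T_0$ is not an honest idempotent you should flag that convention explicitly. Second, in your diagrammatic sketch you cite Lemma~\ref{lem:technical}\eqref{item:4} for ``$T_{m+n}$ is a sub-idempotent of $T_m\otimes T_n$''; that lemma concerns $T_m$ absorbing $\iota^{m-n}(T_n)$, not directly $T_{m+n}(T_m\otimes T_n)=T_{m+n}$. The latter is of course immediate after applying $\phi$, or via the paper's Lemma establishing $(T_m\otimes T_n)s_m(T_m\otimes T_n)=T_{m+n}$, but the citation as written does not quite match.
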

To prove this, we first split $T_m\otimes T_n$ into two orthogonal idempotents $T_{m+n}$ and $e_{m,n}$ in Lemma~\ref{lem:splitidem}. The Propositions~\ref{prop:parprod} and \ref{prop:parprod2} then identify $e_{m,n}$ with $T_{|m-n|}$ in the two distinct cases $m\neq n$ and $m=n$.

We start by noting that crossing-connected projectors can be combined.
\begin{lemma}\label{lem:linkedproj} Let $m,n\in \N$ with $m+n\geq 3$, then $(T_m\otimes T_n)s_m(T_m\otimes T_n)=T_{m+n}$.
\end{lemma}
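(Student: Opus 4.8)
The plan is to use the full faithfulness of $\phi$ (Theorem~\ref{thm:faithfulness}) to transport the claimed identity in $\essATL_{m+n}$ into an identity of linear endomorphisms of $V^{\otimes(m+n)}$, where it can be verified by a short direct computation. Since $\phi$ is monoidal, Theorem~\ref{thm:extweightproj} identifies $\phi(T_m\otimes T_n)=\phi(T_m)\otimes\phi(T_n)=:P$ with the projection onto the four-dimensional coordinate subspace
\[
W=\C\la v_{+\cdots+},v_{-\cdots-}\ra\otimes\C\la v_{+\cdots+},v_{-\cdots-}\ra\subset V^{\otimes(m+n)},
\]
spanned by the four standard basis vectors $v_{+\cdots+}$, $v_{-\cdots-}$, and the two mixed vectors $b$ (first $m$ factors $+$, last $n$ factors $-$) and $c$ (first $m$ factors $-$, last $n$ factors $+$). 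Likewise, $\phi(s_m)=:S$ is the transposition of the $m$-th and $(m+1)$-st tensor factors. It therefore suffices to show that $PSP=\phi(T_{m+n})$, the projection onto $\C\la v_{+\cdots+},v_{-\cdots-}\ra$.

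The vectors $v_{+\cdots+}$ and $v_{-\cdots-}$ carry equal signs in positions $m$ and $m+1$, so they are fixed by $S$ and hence by $PSP$. The only remaining input is the action of $PSP$ on $b$ and $c$. Applying $S$ to $b$ flips the signs in positions $m$ and $m+1$ from $(+,-)$ to $(-,+)$, producing the vector whose first $m$ factors read $+\cdots+-$ and whose last $n$ factors read $+-\cdots-$. If $m\geq 2$ the first block is not extremal and is annihilated by $\phi(T_m)$; if instead $m=1$, then $n\geq 2$ because $m+n\geq 3$, and now the last block $+-\cdots-$ is not extremal and is annihilated by $\phi(T_n)$. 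In either case $P\,S\,b=0$, and the symmetric computation gives $P\,S\,c=0$.

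Thus $PSP$ fixes $v_{+\cdots+}$ and $v_{-\cdots-}$, annihilates $b$ and $c$, and kills every other standard basis vector through the initial $P$; that is, $PSP$ equals the diagonal projection $\phi(T_{m+n})$. Full faithfulness of $\phi$ then upgrades this equality of linear maps to $(T_m\otimes T_n)s_m(T_m\otimes T_n)=T_{m+n}$ in $\essATL$. The only real subtlety is the bookkeeping of the boundary cases $m=1$ and $n=1$, and this is exactly where the hypothesis $m+n\geq 3$ is needed: it guarantees that at least one of the two blocks has size at least $2$ and so becomes non-extremal after the swap. The statement genuinely fails for $m=n=1$, where $S$ instead interchanges $b$ and $c$, giving $PSP=S\neq\phi(T_2)$.
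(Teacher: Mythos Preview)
Your proof is correct and takes essentially the same approach as the paper, which simply states that the identity follows by application of $\phi$; you have supplied the explicit weight-space computation that the paper leaves implicit. Your treatment of the boundary cases $m=1$ or $n=1$ and the remark on why $m+n\geq 3$ is necessary are accurate and make the argument self-contained.
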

\begin{proof}
This follows again by application of $\phi$.
\end{proof}

\begin{lemma} \label{lem:splitidem} For $m,n\geq 1$ we have an orthogonal decomposition of idempotents $T_m\otimes T_n = T_{m+n} + e_{m,n}$
 where $e_{1,1}=-U_1/2 - \wrapi U_1 \wrap/2$ and $e_{m,n}=-(T_m\otimes T_n)U_m(T_m\otimes T_n)$ otherwise.
\end{lemma}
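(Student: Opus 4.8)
The plan is to reduce the decomposition to the braid--cap-cup identity $s_m=\id_{m+n}+U_m$ from \eqref{eq:braid}, fed into the linking formula of Lemma~\ref{lem:linkedproj}. First I would treat the generic range $m+n\geq 3$. Starting from $T_{m+n}=(T_m\otimes T_n)s_m(T_m\otimes T_n)$, I substitute $s_m=\id_{m+n}+U_m$ and use that $T_m\otimes T_n$ is idempotent (a tensor product of idempotents, by Lemma~\ref{lem:technical}(1)):
\[
T_{m+n}=(T_m\otimes T_n)+(T_m\otimes T_n)U_m(T_m\otimes T_n).
\]
Rearranging is exactly $T_m\otimes T_n=T_{m+n}+e_{m,n}$ with $e_{m,n}=-(T_m\otimes T_n)U_m(T_m\otimes T_n)$, so the decomposition itself drops out immediately in this range.

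The case $m=n=1$ is genuinely separate, since Lemma~\ref{lem:linkedproj} requires $m+n\geq 3$. Here $T_1\otimes T_1=\id_2$, and I would simply rearrange the explicit formula $T_2=\id_2+U_1/2+U_0/2$ from Definition~\ref{def:T}. Writing $U_0=\wrapi U_1\wrap$, this yields $\id_2=T_2+e_{1,1}$ with $e_{1,1}=-U_1/2-\wrapi U_1\wrap/2$, matching the asserted value.

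It then remains to verify orthogonality and idempotency. For orthogonality I would first record the absorption identity $T_{m+n}(T_m\otimes T_n)=T_{m+n}=(T_m\otimes T_n)T_{m+n}$. Granting this, the relations $T_{m+n}U_m=U_mT_{m+n}=0$ from Lemma~\ref{lem:technical}(3) give $T_{m+n}e_{m,n}=-T_{m+n}U_m(T_m\otimes T_n)=0$ and $e_{m,n}T_{m+n}=-(T_m\otimes T_n)U_mT_{m+n}=0$. In the exceptional case the same input applies: $T_2U_1=0$ directly, and $T_2U_0=T_2\wrapi U_1\wrap=\wrapi T_2U_1\wrap=0$ after moving $T_2$ past the wraps using Lemma~\ref{lem:technical}(5). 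Idempotency of $e_{m,n}$ is then automatic: squaring $T_m\otimes T_n=T_{m+n}+e_{m,n}$ and using idempotency of the left-hand side and of $T_{m+n}$ together with the orthogonality just established forces $e_{m,n}^2=e_{m,n}$.

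The one delicate point is the absorption identity $T_{m+n}(T_m\otimes T_n)=T_{m+n}$: the two-sided projector $T_m\otimes T_n$ is not literally of the one-sided form $\iota^{m-n}(T_n)=T_n\otimes\id$ covered by Lemma~\ref{lem:technical}(4). I can obtain it by factoring $T_m\otimes T_n=(T_m\otimes\id_n)(\id_m\otimes T_n)$ and applying Lemma~\ref{lem:technical}(4) to the right-added projector, then Lemma~\ref{lem:technical}(4)--(5) (conjugating by $\wrap$) to the left-added one; but I expect the cleanest route is to compare $\phi$-images and invoke faithfulness (Theorem~\ref{thm:faithfulness}), since $\phi(T_{m+n})$ projects onto $\C\la v_{+\cdots +},v_{-\cdots -}\ra$ (Theorem~\ref{thm:extweightproj}), which is visibly contained in the image of $\phi(T_m\otimes T_n)$. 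In fact the whole lemma can alternatively be checked after applying $\phi$: one verifies on the four basis vectors spanning the image of $\phi(T_m\otimes T_n)$ that $\phi(e_{m,n})$ fixes $v_{+\cdots +}\otimes v_{-\cdots -}$ and $v_{-\cdots -}\otimes v_{+\cdots +}$ and annihilates the two ``all equal'' vectors, exhibiting it as the complementary orthogonal projector to $\phi(T_{m+n})$. Everything beyond this verification is formal manipulation with relations already in hand.
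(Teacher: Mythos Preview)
Your proof is correct and follows essentially the same route as the paper: the case $m=n=1$ from the explicit formula for $T_2$, the generic case by expanding $s_m$ in Lemma~\ref{lem:linkedproj}, orthogonality from Lemma~\ref{lem:technical}(\ref{item:3}), and idempotency of $e_{m,n}$ as a formal consequence. Your discussion of the absorption $T_{m+n}(T_m\otimes T_n)=T_{m+n}$ is a useful elaboration of a step the paper leaves implicit; either of your suggested justifications (combining parts (4) and (5) of Lemma~\ref{lem:technical}, or invoking faithfulness of $\phi$) works, and the latter is in the spirit of how the paper proves Lemma~\ref{lem:technical} itself.
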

\begin{proof}
  For $m=n=1$ this follows from the explicit description of $T_2$. Otherwise we use Lemma~\ref{lem:linkedproj}:
\[T_{m+n}=(T_m\otimes T_n)s_m(T_m\otimes T_n)= (T_m\otimes T_n) + (T_m\otimes T_n)U_m(T_m\otimes T_n).\]
Clearly, $e_{m,n}$ and $T_{m+n}$ are orthogonal by Lemma \ref{lem:technical},  (\ref{item:3}), which implies the idempotency of $e_{m,n}$.
\end{proof}

As expected, $\phi(e_{m,n})$ projects onto $\C\la v_{+^m-^n},v_{-^n+^m}\ra$. This could be used to rephrase the last proof.

\begin{lemma}\label{lem:diffproj}
For $1\leq n,m$ and $n+m\geq 3$, the projector $e_{m,n}$ can alternatively be written as 
\[e_{m,n}=(-1)^r(T_m\otimes T_n)(T_{m-r}\otimes \Cu_r\Ca_r\otimes T_{n-r})(T_m\otimes T_n)\] where $1\leq r\leq\min(m,n)$, $r<\max(m,n)$ and $\Cu_r\Ca_r$ is a composition of $r$ nested caps followed by $r$ nested cups. 
\end{lemma}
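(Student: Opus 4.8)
The strategy is to verify the claimed identity after applying the faithful functor $\phi\colon\essATL\to\reph$ of Theorem~\ref{thm:faithfulness}; since $\phi$ is faithful it suffices to check that both sides induce the same endomorphism of $V^{\otimes(m+n)}$. Recall from the discussion after Lemma~\ref{lem:splitidem} that $\phi(e_{m,n})$ is the projection onto $\C\la v_{+^m-^n}, v_{-^m+^n}\ra$. Moreover, since by Theorem~\ref{thm:extweightproj} each $\phi(T_k)$ is the coordinate projection fixing the two extremal basis vectors $v_{+^k}, v_{-^k}$ and killing every other $v_\epsilon$, the operator $\Pi:=\phi(T_m\otimes T_n)$ is the coordinate projection onto the four-dimensional space $W:=\C\la v_{+^m+^n}, v_{+^m-^n}, v_{-^m+^n}, v_{-^m-^n}\ra$. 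The goal is thus to show that $\phi$ of the right-hand side is the coordinate projection onto the two-dimensional subspace $\C\la v_{+^m-^n}, v_{-^m+^n}\ra$.

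First I would simplify the right-hand side diagrammatically. Writing the middle morphism as $(T_{m-r}\otimes\id_{2r}\otimes T_{n-r})(\id_{m-r}\otimes\Cu_r\Ca_r\otimes\id_{n-r})$, which is legal since the three factors have disjoint support, the absorption property \eqref{item:4} of Lemma~\ref{lem:technical}, together with its left--right mirror (valid because $\phi(T_k)$ fixes $v_{+^k},v_{-^k}$ and annihilates all other $v_\epsilon$, hence is symmetric under permuting strands), gives $(T_m\otimes T_n)(T_{m-r}\otimes\id_{2r}\otimes T_{n-r})=T_m\otimes T_n$: the factor $T_{m-r}$ is absorbed into $T_m$ on its first $m-r$ strands and $T_{n-r}$ into $T_n$ on its last $n-r$ strands. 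Hence
\[
(-1)^r(T_m\otimes T_n)(T_{m-r}\otimes\Cu_r\Ca_r\otimes T_{n-r})(T_m\otimes T_n)
=(-1)^r(T_m\otimes T_n)(\id_{m-r}\otimes\Cu_r\Ca_r\otimes\id_{n-r})(T_m\otimes T_n),
\]
and for $r=1$ the right-hand side is exactly $-(T_m\otimes T_n)U_m(T_m\otimes T_n)=e_{m,n}$ by Lemma~\ref{lem:splitidem}. So the real content is the comparison of signs across a general nest of $r$ turnbacks.

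Next I would run the weight-space computation on this reduced form. The nested caps $\phi(\Ca_r)$ pair strand $m-r+j$ with $m+r+1-j$ and evaluate to the product of single-cap values, which is nonzero only when every pair carries opposite signs; a direct count gives $\phi(\Ca_r)(v_{+^r}\otimes v_{-^r})=(-1)^r$ and $\phi(\Ca_r)(v_{-^r}\otimes v_{+^r})=1$, while the nested cups $\phi(\Cu_r)(1)=\sum_\eta c_\eta v_\eta$ have $c_{(+^r,-^r)}=1$ and $c_{(-^r,+^r)}=(-1)^r$. On the four basis vectors of $W$: the vectors $v_{+^m+^n}$ and $v_{-^m-^n}$ present the turnback with equal-sign pairs, so the caps annihilate them. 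On $v_{+^m-^n}=v_{+^{m-r}}\otimes v_{+^r}\otimes v_{-^r}\otimes v_{-^{n-r}}$ the caps contribute $(-1)^r$; the outer $\Pi$ then forces the first $r$ and last $r$ signs of $\phi(\Cu_r)(1)$ to be $+^r$ and $-^r$ (the only way to complete $v_{+^{m-r}}$ and $v_{-^{n-r}}$ to extremal vectors), selecting the single term $\eta=(+^r,-^r)$ with coefficient $1$; the net scalar $(-1)^r$ cancels the prefactor $(-1)^r$, so $v_{+^m-^n}$ is fixed. The symmetric computation fixes $v_{-^m+^n}$. Thus $\phi$ of the right-hand side is the coordinate projection onto $\C\la v_{+^m-^n},v_{-^m+^n}\ra=\phi(e_{m,n})$, and faithfulness concludes.

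The one point needing care --- and the reason for the hypotheses $r\le\min(m,n)$ and $r<\max(m,n)$ --- is the boundary case $r=\min(m,n)<\max(m,n)$, where exactly one of $m-r,n-r$ vanishes and the corresponding factor becomes $T_0$. Here $T_0$ must be read as $\id_0$ rather than as the partial-trace normalisation $2\,\id_0$ of \eqref{eq:pTr}: with $\phi(T_0)=1$ the argument above goes through verbatim, whereas $T_0=2\,\id_0$ would introduce a spurious factor of $2$ in both the absorption step and the evaluation. The constraint $r<\max(m,n)$ guarantees that the other factor remains a genuine projector $T_{\max(m,n)-r}$, so the absorption step and the identification of the image space $W$ stay valid; this is the only subtlety in an otherwise routine application of $\phi$.
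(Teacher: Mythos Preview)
Your proof is correct and follows the same strategy as the paper's: apply the faithful functor $\phi$ and verify the identity on weight vectors. The paper's proof is the single line ``One simply computes the images under $\phi$ and checks they agree,'' so your write-up is a fully fleshed-out version of that sentence, with the explicit sign bookkeeping for nested caps and cups carried out correctly.

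Your preliminary diagrammatic simplification via the absorption property is a nice addition that the paper does not spell out; it makes the $r=1$ case visibly coincide with the definition of $e_{m,n}$ and isolates the actual content (the sign $(-1)^r$ coming from the nested turnbacks). Your closing remark about the boundary case $r=\min(m,n)$ and the reading of $T_0$ as $\id_0$ rather than $2\,\id_0$ is a genuine clarification: the paper never says which convention is intended in the statement of the lemma, and when it later invokes this lemma with $r=n$ in Proposition~\ref{prop:parprod} it silently drops the $T_0$ factor altogether, which is consistent only with your reading.
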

\begin{proof}
  One simply computes the images under $\phi$ and checks they agree. 
 \end{proof}

\begin{lemma}\label{lem:kariso} For $1\leq n\leq m$ we have
$(-1)^n(\id_{m-n}\otimes \Ca_n)(T_m\otimes T_n)(\id_{m-n}\otimes \Cu_n) = T_{m-n}$ where $\Ca_n$ and $\Cu_n$ denote $n$ nested caps and caps respectively and we set $\T_0=2$.  
\end{lemma}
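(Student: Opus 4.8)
The plan is to deduce the identity from its image under $\phi$, using that $\phi\colon \essATL \to \reph$ is faithful (Theorem~\ref{thm:faithfulness}). Both sides are endomorphisms of the object $m-n$, so it suffices to check that the two associated linear endomorphisms of $V^{\otimes(m-n)}$ agree. By Theorem~\ref{thm:extweightproj}, the right-hand side $\phi(T_{m-n})$ is the projection onto the extremal weight space $\C\langle v_{+\cdots+}, v_{-\cdots-}\rangle$, so it is enough to show that $(-1)^n\phi(\id_{m-n}\otimes \Ca_n)\,\phi(T_m\otimes T_n)\,\phi(\id_{m-n}\otimes \Cu_n)$ fixes $v_{\delta\cdots\delta}$ for each $\delta \in \{+,-\}$ and annihilates every other basis vector $v_\epsilon$.

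First I would evaluate $\phi(\id_{m-n}\otimes \Cu_n)$ on a basis vector $v_\epsilon$. Using the explicit cup formula $1 \mapsto v_+\otimes v_- - v_-\otimes v_+$ and the nesting of the $n$ cups, this produces $v_\epsilon$ tensored with a sum over $\eta \in \{+,-\}^n$ in which the left cup-endpoints carry $\eta_1,\dots,\eta_n$, the right cup-endpoints carry $-\eta_n,\dots,-\eta_1$, and the coefficient is the product of signs $s(\eta_i)$ (with $s(+)=1$, $s(-)=-1$). Next I would apply $\phi(T_m\otimes T_n)$: the factor $T_m$ forces the first $m$ tensor slots --- the $m-n$ slots of $v_\epsilon$ together with the left cup-endpoints --- to carry a single constant sign, while $T_n$ forces the $n$ right cup-endpoints to be constant. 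This survives only when $v_\epsilon = v_{\delta\cdots\delta}$ is extremal and all $\eta_i=\delta$, collapsing the sum to a single surviving term of coefficient $s(\delta)^n$ (and killing every non-extremal $v_\epsilon$).

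It then remains to apply $\phi(\id_{m-n}\otimes \Ca_n)$ and collect signs. Each of the $n$ nested caps contracts a $v_\delta$ against a $v_{-\delta}$ and contributes a factor $-s(\delta)$ by the cap formula, so the caps yield $(-1)^n s(\delta)^n$; together with the cup coefficient $s(\delta)^n$ and the prefactor $(-1)^n$ this gives $(-1)^n s(\delta)^{2n}(-1)^n = 1$ on $v_{\delta\cdots\delta}$. Hence the left-hand side acts as the identity on each extremal vector and annihilates all others, which is exactly the extremal weight projector $\phi(T_{m-n})$.

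Finally I would treat the degenerate case $m=n$ separately. Here there are no free strands, so the constraint imposed by $v_\epsilon$ in the second step disappears and both sign choices $\delta=+$ and $\delta=-$ survive; summing their contributions gives the scalar $2$, matching $T_0 = 2\,\id_0$. The only real care needed throughout is the sign bookkeeping --- matching the cup coefficients $s(\eta_i)$, the cap evaluations $-s(\delta)$, and the overall $(-1)^n$ --- together with the observation that in the $m=n$ case the two surviving signs add rather than being selected by an input vector; once these are handled the computation is routine.
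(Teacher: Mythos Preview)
Your argument is correct: applying $\phi$ and computing directly in $\reph$ via the explicit cup/cap formulas and Theorem~\ref{thm:extweightproj} works, the sign bookkeeping is right, and the $m=n$ case is handled properly. This is entirely in the spirit of how the paper proves Lemmas~\ref{lem:technical}, \ref{lem:linkedproj}, \ref{lem:diffproj} and \ref{lem:overlap}.

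The paper, however, takes a different route for this particular lemma. It first establishes diagrammatically, by induction on $n$ with an explicit check for $n=2$, the sliding identity $\Ca_n(\id_n\otimes T_n)=\Ca_n(T_n\otimes\id_n)$; from this it deduces $\Ca_n(T_n\otimes T_n)=\Ca_n(T_n\otimes\id_n)$, and then the statement follows from the partial trace formula~\eqref{eq:pTr}. Your approach is shorter and more direct; the paper's approach yields, as a byproduct, the sliding identity itself, which is later reused in the proof of Proposition~\ref{prop:parprod2} (where the authors explicitly invoke ``the proof of Lemma~\ref{lem:kariso}''). So both are valid, but be aware that the diagrammatic sliding step is needed downstream and would have to be supplied separately if you replace the paper's proof here.
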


\begin{proof} 
We first verify $\Ca_n(\id_n\otimes T_n)=\Ca_n(T_n \otimes \id_n)$. This is trivial for $n=1$ and easily verified through a short computation for $n=2$. Indeed, $T_2=\id_2 + U_1/2 + U_0/2$ and the first two terms can be isotoped across $\Ca_2$ at no cost. For the term $U_0/2$ we compare:
\begin{gather*}\Ca_2(\id_2\otimes U_0) \;=\;
 \begin{tikzpicture}[anchorbase, scale=.3]
%% Inner boundary circle
\draw (0,0) circle (1);
%% Outer boundary circle
\draw (0,0) circle (3);
%% web edges
\draw [very thick] (.8,.6)to [out=60,in=90] (1.5,0) to [out=270,in=0] (0,-1.5) to [out=180,in=270] (-1.5,0) to [out=90,in=90]  (0,1);
\draw [very thick] (0,-2.25) to [out=0,in=270] (2.25,0) to [out=90,in=270] (1.25,1.75) to[out=90,in=0](0,2.75) to [out=180,in=90] (-1.75,1) to [out=270,in=180] (-1,0);
\draw [very thick] (0,-2.25) to [out=180,in=270] (-2.25,0) to[out=90,in=180] (0,1.75) to [out=0,in=270] (.5,2) to [out=90,in=0] (0,2.25) to [out=180,in=120] (-.8,.6) ;
% boundary markings
\node at (0,-1) {$*$};
\node at (0,-3) {$*$};
\draw [dashed] (0,-1) to (0,-3);
\end{tikzpicture}
\;=\; - \;
 \begin{tikzpicture}[anchorbase, scale=.3]
%% Inner boundary circle
\draw (0,0) circle (1);
%% Outer boundary circle
\draw (0,0) circle (3);
%% web edges
\draw [very thick] (.8,.6)to [out=60,in=90] (1.5,0) to [out=270,in=0] (0,-1.5) to [out=180,in=270] (-1.5,0) to [out=90,in=90]  (0,1);
\draw [very thick] (0,-2.25) to [out=0,in=270] (2.25,0) to [out=90,in=0](0,2.25) to [out=180,in=90] (-1.75,1) to [out=270,in=180] (-1,0);
\draw [very thick] (0,-2.25) to [out=180,in=270] (-2.25,0) to[out=90,in=120] (-.8,.6) ;
% boundary markings
\node at (0,-1) {$*$};
\node at (0,-3) {$*$};
\draw [dashed] (0,-1) to (0,-3);
\end{tikzpicture}
\;=\; - \;
 \begin{tikzpicture}[anchorbase, scale=.3]
%% Inner boundary circle
\draw (0,0) circle (1);
%% Outer boundary circle
\draw (0,0) circle (3);
%% web edges
\draw [very thick] (.8,.6)to [out=60,in=90] (1.5,0) to [out=270,in=0] (0,-1.5) to [out=180,in=270] (-1.5,0) to [out=90,in=90]  (0,1);
\draw [very thick] (0,-2.25) to [out=0,in=270] (2.25,0) to [out=90,in=0](0,2.25) to [out=180,in=90] (-1.75,1.25) to [out=270,in=120] (-.8,.6);
\draw [very thick] (0,-2.25) to [out=180,in=270] (-2.25,0) to[out=90,in=180] (-1,0) ;
% boundary markings
\node at (0,-1) {$*$};
\node at (0,-3) {$*$};
\draw [dashed] (0,-1) to (0,-3);
\end{tikzpicture}
\;- \;
 \begin{tikzpicture}[anchorbase, scale=.3]
%% Inner boundary circle
\draw (0,0) circle (1);
%% Outer boundary circle
\draw (0,0) circle (3);
%% web edges
\draw [very thick] (.8,.6)to [out=60,in=90] (1.5,0) to [out=270,in=0] (0,-1.5) to [out=180,in=270] (-1.5,0) to [out=90,in=90]  (0,1);
\draw [very thick] (0,-2.25) to [out=0,in=270] (2.25,0) to [out=90,in=0](0,2.25);
\draw[very thick] (-1,0) to (-1.75,0) to [out=180,in=120] (-1.4,1.05) to (-.8,.6);
\draw [very thick] (0,-2.25) to [out=180,in=270] (-2.25,0) to[out=90,in=180] (0,2.25) ;
% boundary markings
\node at (0,-1) {$*$};
\node at (0,-3) {$*$};
\draw [dashed] (0,-1) to (0,-3);
\end{tikzpicture}
\\
\Ca_2(U_0\otimes \id_2) \;=\; 
 \begin{tikzpicture}[anchorbase, xscale=-.3, yscale=.3]
%% Inner boundary circle
\draw (0,0) circle (1);
%% Outer boundary circle
\draw (0,0) circle (3);
%% web edges
\draw [very thick] (.8,.6)to [out=60,in=90] (1.5,0) to [out=270,in=0] (0,-1.5) to [out=180,in=270] (-1.5,0) to [out=90,in=90]  (0,1);
\draw [very thick] (0,-2.25) to [out=0,in=270] (2.25,0) to [out=90,in=270] (1.25,1.75) to[out=90,in=0](0,2.75) to [out=180,in=90] (-1.75,1) to [out=270,in=180] (-1,0);
\draw [very thick] (0,-2.25) to [out=180,in=270] (-2.25,0) to[out=90,in=180] (0,1.75) to [out=0,in=270] (.5,2) to [out=90,in=0] (0,2.25) to [out=180,in=120] (-.8,.6) ;
% boundary markings
\node at (0,-1) {$*$};
\node at (0,-3) {$*$};
\draw [dashed] (0,-1) to (0,-3);
\end{tikzpicture}
\;=\; - \;
 \begin{tikzpicture}[anchorbase, xscale=-.3, yscale=.3]
%% Inner boundary circle
\draw (0,0) circle (1);
%% Outer boundary circle
\draw (0,0) circle (3);
%% web edges
\draw [very thick] (.8,.6)to [out=60,in=90] (1.5,0) to [out=270,in=0] (0,-1.5) to [out=180,in=270] (-1.5,0) to [out=90,in=90]  (0,1);
\draw [very thick] (0,-2.25) to [out=0,in=270] (2.25,0) to [out=90,in=0](0,2.25) to [out=180,in=90] (-1.75,1) to [out=270,in=180] (-1,0);
\draw [very thick] (0,-2.25) to [out=180,in=270] (-2.25,0) to[out=90,in=120] (-.8,.6) ;
% boundary markings
\node at (0,-1) {$*$};
\node at (0,-3) {$*$};
\draw [dashed] (0,-1) to (0,-3);
\end{tikzpicture}
\;=\; - \;
 \begin{tikzpicture}[anchorbase, xscale=-.3, yscale=.3]
%% Inner boundary circle
\draw (0,0) circle (1);
%% Outer boundary circle
\draw (0,0) circle (3);
%% web edges
\draw [very thick] (.8,.6)to [out=60,in=90] (1.5,0) to [out=270,in=0] (0,-1.5) to [out=180,in=270] (-1.5,0) to [out=90,in=90]  (0,1);
\draw [very thick] (0,-2.25) to [out=0,in=270] (2.25,0) to [out=90,in=0](0,2.25) to [out=180,in=90] (-1.75,1.25) to [out=270,in=120] (-.8,.6);
\draw [very thick] (0,-2.25) to [out=180,in=270] (-2.25,0) to[out=90,in=180] (-1,0) ;
% boundary markings
\node at (0,-1) {$*$};
\node at (0,-3) {$*$};
\draw [dashed] (0,-1) to (0,-3);
\end{tikzpicture}
\;- \;
 \begin{tikzpicture}[anchorbase, xscale=-.3, yscale=.3]
%% Inner boundary circle
\draw (0,0) circle (1);
%% Outer boundary circle
\draw (0,0) circle (3);
%% web edges
\draw [very thick] (.8,.6)to [out=60,in=90] (1.5,0) to [out=270,in=0] (0,-1.5) to [out=180,in=270] (-1.5,0) to [out=90,in=90]  (0,1);
\draw [very thick] (0,-2.25) to [out=0,in=270] (2.25,0) to [out=90,in=0](0,2.25);
\draw[very thick] (-1,0) to (-1.75,0) to [out=180,in=120] (-1.4,1.05) to (-.8,.6);
\draw [very thick] (0,-2.25) to [out=180,in=270] (-2.25,0) to[out=90,in=180] (0,2.25) ;
% boundary markings
\node at (0,-1) {$*$};
\node at (0,-3) {$*$};
\draw [dashed] (0,-1) to (0,-3);
\end{tikzpicture}
\end{gather*}
The second terms on the right-hand sides of both equations are zero and the first terms are isotopic. The case of $n\geq 3$ then follows inductively from the recursive definition of the projectors $T_n$ and the fact that crossings slide around caps. 

Having established that projectors slide around caps, we also have $\Ca_n(T_n\otimes T_n)=\Ca_n(T_n^2 \otimes \id_n)=\Ca_n(T_n \otimes \id_n)$ and the result follows from the partial trace formula \eqref{eq:pTr}. 
\end{proof}

\begin{proposition} 
\label{prop:parprod}For $m,n\geq 1$ and $m\neq n$ the idempotents $e_{m,n}$ and $T_{|m-n|}$ represent isomorphic objects in $\Kar(\essATL)$.
\end{proposition}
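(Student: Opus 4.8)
The plan is to mimic the Jones--Wenzl isomorphism \eqref{eq:JWPiso} and the $n=1$ argument already used for $[T_m][T_1]=[T_{m+1}]+[T_{m-1}]$, by writing down an explicit pair of mutually inverse morphisms in $\Kar(\essATL)$ between $(m+n,e_{m,n})$ and $(|m-n|,T_{|m-n|})$. Since $m\neq n$, I may assume $m>n$, so that $|m-n|=m-n\geq 1$; the case $m<n$ is obtained by applying the symmetric braiding that interchanges the two tensor factors, under which $e_{m,n}$ corresponds to $e_{n,m}$ and $|m-n|$ is unchanged.

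First I would introduce the two morphisms built from $n$ nested cups and caps on the last $2n$ strands,
\[
\alpha=(T_m\otimes T_n)(\id_{m-n}\otimes\Cu_n),\qquad \beta=(-1)^n(\id_{m-n}\otimes\Ca_n)(T_m\otimes T_n),
\]
viewed as morphisms $\alpha\colon(m-n,T_{m-n})\to(m+n,e_{m,n})$ and $\beta\colon(m+n,e_{m,n})\to(m-n,T_{m-n})$. A preliminary step is to check that these really are morphisms in the Karoubi envelope, i.e. that $e_{m,n}\,\alpha\,T_{m-n}=\alpha$ and $T_{m-n}\,\beta\,e_{m,n}=\beta$. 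This is routine: $T_m\otimes T_n$ absorbs the smaller projector $T_{m-n}\otimes\id_{2n}$ by \eqref{item:4} of Lemma~\ref{lem:technical}, one has $e_{m,n}(T_m\otimes T_n)=e_{m,n}=(T_m\otimes T_n)e_{m,n}$ by Lemma~\ref{lem:splitidem}, and writing $T_m\otimes T_n=T_{m+n}+e_{m,n}$ shows that $e_{m,n}$ and $T_m\otimes T_n$ act identically on $\id_{m-n}\otimes\Cu_n$ and on $\id_{m-n}\otimes\Ca_n$, because $T_{m+n}$ annihilates any single cup or cap as a consequence of \eqref{item:3} of Lemma~\ref{lem:technical}.

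The first composite is then immediate from Lemma~\ref{lem:kariso}: using $(T_m\otimes T_n)^2=T_m\otimes T_n$,
\[
\beta\alpha=(-1)^n(\id_{m-n}\otimes\Ca_n)(T_m\otimes T_n)(\id_{m-n}\otimes\Cu_n)=(-1)^n\cdot(-1)^nT_{m-n}=T_{m-n},
\]
which is the identity of $(m-n,T_{m-n})$. For the reverse composite one computes
\[
\alpha\beta=(-1)^n(T_m\otimes T_n)(\id_{m-n}\otimes\Cu_n\Ca_n)(T_m\otimes T_n),
\]
and it remains to identify the sandwiched $n$-fold nested cap-cup with $(-1)^ne_{m,n}$, so that the signs cancel and $\alpha\beta=e_{m,n}$, the identity of $(m+n,e_{m,n})$. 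This is precisely the $r=n$ instance of Lemma~\ref{lem:diffproj}, once the redundant $T_{m-n}$ on the first block has been absorbed into $T_m$.

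The step I expect to be the crux is exactly this last identification, together with the bookkeeping of scalars. The delicate point is that taking $r=n$ collapses the entire $T_n$-block, so one must be careful that no spurious factor is introduced by the convention $T_0=2\,\id_0$ used in Lemma~\ref{lem:kariso} and in the partial trace formula \eqref{eq:pTr}. To avoid any ambiguity I would, if needed, verify the single identity $(T_m\otimes T_n)(\id_{m-n}\otimes\Cu_n\Ca_n)(T_m\otimes T_n)=(-1)^ne_{m,n}$ directly by applying the faithful functor $\phi$ of Theorem~\ref{thm:faithfulness}: on the four vectors $v_{\pm^m\pm^n}$ spanning the image of $\phi(T_m\otimes T_n)$, the nested caps annihilate $v_{+^m+^n}$ and $v_{-^m-^n}$ and send $v_{+^m-^n},v_{-^m+^n}$ to scalar multiples of $v_{+^{m-n}},v_{-^{m-n}}$, while the nested cups undo this up to the same scalar, producing exactly $(-1)^n$ times the rank-two projector $\phi(e_{m,n})$ onto $\C\la v_{+^m-^n},v_{-^m+^n}\ra$. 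With both composites verified, $\alpha$ and $\beta$ are inverse isomorphisms and the proposition follows.
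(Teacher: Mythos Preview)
Your proposal is correct and follows essentially the same approach as the paper's proof: you assume $m>n$, use Lemma~\ref{lem:diffproj} with $r=n$ to factor $e_{m,n}$, and invoke Lemma~\ref{lem:kariso} to check that the cap and cup morphisms give inverse isomorphisms in $\Kar(\essATL)$. You are in fact more careful than the paper on two points --- you keep explicit track of the $(-1)^n$ (which the paper's proof suppresses), and you flag and resolve the $r=n$ boundary case of Lemma~\ref{lem:diffproj} where $T_{n-r}=T_0$ might cause concern, backing it up with a direct check via $\phi$.
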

\begin{proof} We may assume that $m>n$ as the case $m<n$ is completely analogous. We use Lemma~\ref{lem:diffproj} to write $e_{m,n}=(T_m\otimes T_n)(T_{m-n}\otimes \Cu_n)(T_{m-n}\otimes Ca_n)(T_m\otimes T_n)$. Then it is immediate from Lemma~\ref{lem:kariso} that the maps $(T_m\otimes T_n)(T_{m-n}\otimes \Cu_n)$ and $(T_{m-n}\otimes \Ca_n)(T_m\otimes T_n)$ are inverse isomorphisms between the elements of the Karoubi element represented by the idempotents $e_{m,n}$ and $T_{m-n}$. 
\end{proof}

In order to describe $e_{m,m}$, we need the fact that overlapping projectors can be combined.
\begin{lemma} \label{lem:overlap}Let $1\leq n\leq m$ and $0\leq r<n$, then the following hold: 
\begin{align*}
(T_m\otimes \id_r)(\id_{m-n+r}\otimes T_n) &= T_{m+r} = (\id_{m-n+r}\otimes T_n)(T_m\otimes \id_r)\\
(\id_r \otimes T_m)(T_n\otimes \id_{m-n+r}) &= T_{m+r} =(T_n\otimes \id_{m-n+r})(\id_r \otimes T_m)
\end{align*}
\end{lemma}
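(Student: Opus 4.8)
The plan is to follow the same strategy as in Lemmas~\ref{lem:linkedproj} and \ref{lem:diffproj}: since $\phi\colon\essATL\to\reph$ is faithful by Theorem~\ref{thm:faithfulness}, it suffices to verify each of the four claimed identities after applying $\phi$, that is, as an equality of linear endomorphisms of $V^{\otimes(m+r)}$. The essential input is Theorem~\ref{thm:extweightproj}, which identifies $\phi(T_k)$ with the projection of $V^{\otimes k}$ onto its extremal weight space $\C\la v_{+\cdots+},v_{-\cdots-}\ra$. Because the extremal weight spaces are each spanned by a single standard basis vector, this projector is diagonal in the basis $\{v_\epsilon\}$: it fixes $v_\epsilon$ when $\epsilon$ is constant and annihilates $v_\epsilon$ otherwise. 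Tensoring with an identity, $\phi(T_m\otimes\id_r)$ fixes $v_\epsilon$ precisely when $\epsilon_1=\cdots=\epsilon_m$, while $\phi(\id_{m-n+r}\otimes T_n)$ fixes $v_\epsilon$ precisely when $\epsilon_{m-n+r+1}=\cdots=\epsilon_{m+r}$, each annihilating all remaining basis vectors.

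The key computation is to trace a basis vector $v_\epsilon\in V^{\otimes(m+r)}$ through the composite $\phi(T_m\otimes\id_r)\circ\phi(\id_{m-n+r}\otimes T_n)$. Both factors project a block of consecutive strands onto constant-sign vectors; the first block occupies positions $1,\dots,m$ and the second positions $m-n+r+1,\dots,m+r$. The hypothesis $r<n$ guarantees that these blocks overlap in exactly $n-r\geq 1$ strands, namely positions $m-n+r+1,\dots,m$. Hence $v_\epsilon$ survives the composite only if its first $m$ signs are constant and its last $n$ signs are constant; since the two constant runs share at least one position, they must agree, forcing $\epsilon_1=\cdots=\epsilon_{m+r}$. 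We conclude that the composite fixes $v_{+\cdots+}$ and $v_{-\cdots-}$ and annihilates every other $v_\epsilon$, which is precisely $\phi(T_{m+r})$. Faithfulness then yields $(T_m\otimes\id_r)(\id_{m-n+r}\otimes T_n)=T_{m+r}$.

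The remaining identities follow by the same bookkeeping. Reversing the composition order merely interchanges the order in which the two blocks are projected but leaves the set of surviving vectors unchanged, so $(\id_{m-n+r}\otimes T_n)(T_m\otimes\id_r)$ also maps to $\phi(T_{m+r})$; the second line is obtained by the evident left--right reflection, placing the length-$m$ block on positions $r+1,\dots,m+r$ and the length-$n$ block on positions $1,\dots,n$, again overlapping in $n-r$ strands. I expect no genuine obstacle: the whole argument is routine once $\phi$ and its faithfulness are in hand, and the only point demanding care is the index bookkeeping that pins the overlap down to $n-r$ strands and uses $r<n$ to ensure it is nonempty. As a sanity check, the degenerate case $r=0$, where the length-$n$ block sits entirely inside the length-$m$ block, reproduces the absorption property $T_m\,\iota^{m-n}(T_n)=T_m$ of Lemma~\ref{lem:technical}\,\eqref{item:4}.
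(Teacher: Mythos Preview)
Your argument is correct and is precisely the approach the paper takes: the paper's proof consists of the single sentence ``Again, this follows by application of $\phi$,'' and you have spelled out exactly what that means using Theorems~\ref{thm:faithfulness} and~\ref{thm:extweightproj}. The index bookkeeping and the sanity check against Lemma~\ref{lem:technical}\,\eqref{item:4} are fine.
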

\begin{proof} Again, this follows by application of $\phi$.
\end{proof}

\begin{proposition}
\label{prop:parprod2}
The idempotent $e_{m,m}$ is isomorphic to two copies of $\id_0$ in $\Kar(\essATL)$.
\end{proposition}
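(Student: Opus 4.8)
The plan is to imitate the structure of Proposition~\ref{prop:parprod}, but with the crucial caveat that now $|m-n|=0$ and $T_0=2\,\id_0$, so the single cap/cup pair used there no longer suffices. Indeed, Lemma~\ref{lem:kariso} with $n=m$ already tells us that closing up $T_m\otimes T_m$ by a single nested cap/cup bundle gives the \emph{scalar} $2$, namely $(-1)^m\Ca_m(T_m\otimes T_m)\Cu_m=T_0=2\,\id_0$. This factor of $2$ is the shadow of the rank-$2$ nature of $e_{m,m}$: recall (as noted after Lemma~\ref{lem:splitidem}) that $\phi(e_{m,m})$ projects onto the $2$-dimensional weight-$0$ space $\C\la v_{+^m-^m},v_{-^m+^m}\ra$. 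So the task is to split $e_{m,m}$ into two orthogonal rank-$1$ idempotents, each isomorphic to $\id_0$, exactly as in the base case $e_{1,1}=-U_1/2-U_0/2$ from the proof of the Chebyshev recursion, where the two summands came from the non-wrapping cap/cup $U_1$ and the wrapping one $U_0$.

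First I would produce one summand from the non-wrapping nested cap/cup. Viewing $b:=e_{m,m}\Cu_m$ and $a:=\tfrac{(-1)^m}{2}\Ca_m\, e_{m,m}$ as Karoubi morphisms between $(2m,e_{m,m})$ and $(0,\id_0)$, I would check $a\circ b=\id_0$: since $T_{2m}$ annihilates every nested balanced pattern we have $e_{m,m}\Cu_m=(T_m\otimes T_m)\Cu_m$, and then Lemma~\ref{lem:kariso} (with $n=m$) gives exactly $\tfrac{(-1)^m}{2}\Ca_m(T_m\otimes T_m)\Cu_m=\id_0$. Consequently
\[
f_1:=b\circ a \in e_{m,m}\,\essATL_{2m}\,e_{m,m}
\]
is an idempotent with $(2m,f_1)\cong(0,\id_0)$, and $\phi(f_1)$ is the rank-$1$ projector onto $\C\, w_1$ with $w_1=v_{+^m-^m}+(-1)^m v_{-^m+^m}=\phi\big((T_m\otimes T_m)\Cu_m\big)(1)$.

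Next I would set $f_2:=e_{m,m}-f_1$. Because $f_1$ factors through $e_{m,m}$ (so $e_{m,m}f_1=f_1 e_{m,m}=f_1$), the element $f_2$ is automatically an idempotent, orthogonal to $f_1$, with $f_1+f_2=e_{m,m}$. To finish it remains to identify $(2m,f_2)\cong(0,\id_0)$. The cleanest route is to observe that $\phi(f_2)=\phi(e_{m,m})-\phi(f_1)$ is a rank-$1$ idempotent onto a $1$-dimensional weight-$0$ subspace of $V^{\otimes 2m}$; under the equivalence $\Kar(\essATL)\simeq\reph$ furnished by the fully faithful functor of Theorem~\ref{thm:faithfulness}, such an idempotent corresponds to the trivial weight-$0$ representation $\phi(\id_0)$, whence $(2m,f_2)\cong(0,\id_0)$. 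Assembling the orthogonal decomposition $e_{m,m}=f_1+f_2$ then yields $e_{m,m}\cong\id_0\oplus\id_0$ in $\Kar(\essATL)$.

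The one genuinely delicate point is the complementarity of the two pieces. Conceptually $f_2$ should be realized, as in the $m=1$ case, by the \emph{wrapping} cap/cup pair obtained by conjugating $a,b$ with $\wrap$ and $\wrapi$ (the analog of the $-U_0/2$ term), and the orthogonality $f_1f_2=f_2f_1=0$ should reflect the fact that composing a wrapping with a non-wrapping cap/cup produces an essential circle, which vanishes in $\essATL$ by definition --- precisely the mechanism $U_1U_0=U_0U_1=0$ that made the decomposition of $e_{1,1}$ orthogonal. Making this vanishing explicit and diagrammatic for general $m$ is the fiddly part; I expect it to be the main obstacle, and it is exactly what the detour through $\phi$ and the equivalence $\Kar(\essATL)\simeq\reph$ lets one sidestep, since there the two rank-$1$ projectors onto the complementary lines in $\C\la v_{+^m-^m},v_{-^m+^m}\ra$ are manifestly orthogonal and sum to $\phi(e_{m,m})$.
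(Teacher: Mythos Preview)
Your argument is correct. The first idempotent $f_1=b\circ a$ coincides exactly with the paper's first summand $f_1g_1=\tfrac{(-1)^m}{2}(T_m\otimes T_m)\Cu_m\Ca_m(T_m\otimes T_m)$, and your verification $a\circ b=\id_0$ via Lemma~\ref{lem:kariso} (after noting $T_{2m}\Cu_m=0$) is the same as the paper's check that $g_1f_1=\id_0$. Your definition $f_2:=e_{m,m}-f_1$ and the appeal to the equivalence $\Kar(\essATL)\simeq\reph$ to identify $(2m,f_2)\cong(0,\id_0)$ are both legitimate: the equivalence is stated in the paper just before Lemma~\ref{lem:K0JW}, and a fully faithful functor reflects isomorphisms.

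Where you diverge from the paper is precisely the point you flag as ``fiddly''. The paper does \emph{not} sidestep it: it writes down the second pair explicitly as
\[
f_2=(-1)^m(T_m\otimes T_m)\,\wrap\,(\Cu_1\otimes\Cu_{m-1}),\qquad
g_2=(\Ca_1\otimes\Ca_{m-1})\,\wrapi\,(T_m\otimes T_m)/2,
\]
obtains the splitting $e_{m,m}=f_1g_1+f_2g_2$ by expanding a copy of $T_2$ in the identity $(T_m\otimes T_m)\wrap(T_2\otimes\Cu_{m-1}\Ca_{m-1})\wrapi(T_m\otimes T_m)=0$, and then checks $g_if_j=\delta_{ij}\id_0$ diagrammatically, using the essential circle relation for the off-diagonal terms and the partial trace formula~\eqref{eq:pTr} for the diagonal ones. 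Your intuition that the second piece should come from the wrapping cap/cup is exactly right; the paper's choice $\wrap(\Cu_1\otimes\Cu_{m-1})$ is the concrete realisation of this.

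What each buys: your route is shorter and conceptually clean, but it relies on the representation-theoretic functor $\phi$, which is specific to this $q=1$ setting. The paper's explicit diagrammatic isomorphisms live entirely inside $\essATL$ and are therefore better suited to the intended applications (the cobordism-category and skein-algebra categorifications announced in the introduction), where one wants formulas that survive deformation or categorification without reference to $\reph$.
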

\begin{proof}
For $m=1$ we have $e_{1,1}=-U_1/2 - \wrapi U_1 \wrap/2$. The two summands are orthogonal idempotents, each of which is isomorphic to $\id_0$ in $\Kar(\essATL)$. For $m>1$ we rewrite:
\begin{align*} e_{m,m} =& (-1)^{m-1} (T_m\otimes T_m)(\id_1\otimes(\Cu_{m-1}\Ca_{m-1})\otimes \id_1)(T_m\otimes T_m)
\\
=& \underbrace{(-1)^{m}(T_m\otimes T_m)\Cu_{m}}_{f_1}\circ \underbrace{\Ca_{m}(T_m\otimes T_m)/2}_{g_1} 
\\&+  \underbrace{(-1)^{m}(T_m\otimes T_m)\wrap (\Cu_1\otimes \Cu_{m-1})}_{f_2} \circ \underbrace{(\Ca_1\otimes \Ca_{m-1})\wrapi(T_m\otimes T_m)/2}_{g_2} 
\end{align*} The first equality comes from Lemma~\ref{lem:diffproj}. The second equality can be verified by expanding $T_2$ in the equality $0=(T_m\otimes T_m)\wrap(T_2\otimes \Cu_{m-1}\Ca_{m-1})\wrapi(T_m\otimes T_m)$, which follows from Lemma~\ref{lem:overlap} and Lemma \ref{lem:technical}, (\ref{item:3}). To prove the proposition, it remains to verify that $g_i f_j = \delta_{i,j} \id_0$. We give one example for orthogonality:
\begin{align*}
g_1 f_2 &= (-1)^{m}\Ca_{m}(T_m\otimes T_m)\wrap (\Cu_1\otimes \Cu_{m-1})/2 \\
&= (-1)^{m}\Ca_{m}(\id_m\otimes T_m)\wrap (\Cu_1\otimes \Cu_{m-1})/2 
= -\id_0\otimes (\Ca_1 D \Cu_1) =0 
\end{align*}
Here we have used the proof of Lemma~\ref{lem:kariso}, partial trace relations and the essential torus relation. The proof of $g_2 f_1=0$ is analogous. $g_1 f_1= \id_0$ follows from equation~\eqref{eq:pTr}. It remains to check 
\[
g_2 f_2= (-1)^m(\Ca_1\otimes \Ca_{m-1})\wrapi(T_m\otimes T_m)\wrap (\Cu_1\otimes \Cu_{m-1})/2 =  \id_0.
\]
For $m=2$ this follows by expanding the right copy of $T_2$ and seeing that all terms except the identity term die. The result is evaluated using the partial trace relation twice, which produces $\id_0$. For $m\geq 3$, we use the recursion on the right copy of $T_m$, absorb the resulting copies of $T_{m-1}$ as in the proof of Lemma~\ref{lem:kariso} and then apply the partial trace relation $m-2$ times. The result  is equal to $(\Ca_1\otimes \Ca_1)\wrapi (T_2\otimes s)\wrap (\Cu_1\otimes \Cu_1)/2$, which evaluates to $\id_0$ after expanding the crossing $s$.
\end{proof}

\subsection{Decategorification}\label{sec:decat}

We have seen that the extremal weight projectors in $\essATL$ categorify the Chebyshev polynomials of the first kind in the sense that their $K_0$-classes satisfy the appropriate recurrence relation and multiplication rule. However, the Grothendieck group of the Karoubi envelope of $\essATL$, i.e. the representation ring of $\mathfrak{h}$, is actually larger than the polynomial ring $\Z[X]\cong K_0(\Kar(\TL))\cong K_0(\rep)$. To see this, recall that the objects in $\reph$ are direct sums of integral $\slnn{2}$ weight spaces. However, in the Grothendieck group, such direct sums can be written as formal differences of $\slnn{2}$-representations only if they are orbits of the action for the Weyl group $\mathfrak{S}_2$. There are two ways to address this issue: in this section, we identify a sub-category of $\Kar(\essATL)$ that is $\mathfrak{S}_2$-equivariant, that contains the extremal weight projectors and has $\Z[X]$ as Grothendieck group. In the next section, on the other hand, we use the broken symmetry in $\essATL$ to identify the projectors onto highest and lowest weight spaces.

\begin{definition} We let $\reph^{\mathfrak{S}_2}$ denote the subcategory of $\reph$ with objects that are invariant under $\mathfrak{S}_2$ and morphisms that are $\mathfrak{S}_2$-equivariant.  
\end{definition}  

\begin{lemma} The category $\reph^{\mathfrak{S}_2}$ is semi-simple, $\phi(T_m)$ for $m\geq 1$ are simple objects and the homomorphism $\Z[X]\cong  K_0(\rep) \to K_0(\reph^{\mathfrak{S}_2})$ induced by the inclusion is an isomorphism.
\end{lemma}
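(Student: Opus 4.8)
The plan is to realize $\reph^{\mathfrak{S}_2}$ as the category of $\mathfrak{S}_2$-equivariant objects of $\reph$, where the nontrivial element of $\mathfrak{S}_2$ acts by the weight-reflection $n\mapsto -n$ on the $\Z$-graded vector spaces underlying objects of $\reph$. Since $\reph$ is semisimple with simple objects the one-dimensional weight spaces $\C_n$ ($n\in\Z$), and since $|\mathfrak{S}_2|=2$ is invertible in $\C$, a Maschke-type averaging argument establishes the first assertion: any (non-equivariant) splitting of a short exact sequence of equivariant objects can be averaged over the group to produce an equivariant splitting, so $\reph^{\mathfrak{S}_2}$ is semisimple.

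For the second assertion, recall from Theorem~\ref{thm:extweightproj} that $\phi(T_m)$ is the extremal weight space $\C\la v_{+\cdots +},v_{-\cdots -}\ra$, that is, the graded vector space $\C_m\oplus \C_{-m}$ on which $\mathfrak{S}_2$ acts by interchanging the two summands. First I would compute $\End_{\reph^{\mathfrak{S}_2}}(\phi(T_m))$: a grading-preserving endomorphism is a pair of scalars acting on $\C_m$ and $\C_{-m}$, and $\mathfrak{S}_2$-equivariance forces these to coincide. Hence the equivariant endomorphism algebra is $\C$, so $\phi(T_m)$ is simple for every $m\ge 1$.

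The third assertion is the heart of the matter, and I would obtain it by classifying the simple objects and checking that restriction carries the basis $\{J_m\}_{m\ge 0}$ of $K_0(\rep)$ to a basis of $K_0(\reph^{\mathfrak{S}_2})$. The $\mathfrak{S}_2$-orbits of weights are the free orbits $\{n,-n\}$ for $n\ge 1$, whose associated simple equivariant objects are exactly the $\phi(T_n)$, together with the single fixed weight $0$. The weight-reflection fixes the zero weight space pointwise, so this fixed point contributes precisely one simple object, namely the monoidal unit $\id_0=\C_0$; this is consistent with Proposition~\ref{prop:parprod2}, where the weight-zero part of $T_m\otimes T_m$ is identified with copies of $\id_0$. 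Consequently $K_0(\reph^{\mathfrak{S}_2})$ is free abelian on $\{[\id_0]\}\cup\{[\phi(T_m)]\}_{m\ge 1}$.

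Finally, by Theorem~\ref{thm:extweightproj} the restriction of $\Sym^m(V)$ decomposes as $\bigoplus_{m-2k\ge 1}\phi(T_{m-2k})$, together with one copy of $\id_0$ when $m$ is even. Thus the induced ring homomorphism sends $J_m=[\Sym^m V]$ to $[\phi(T_m)]+[\phi(T_{m-2})]+\cdots$, which is unitriangular with respect to the orderings $J_0,J_1,J_2,\dots$ and $[\id_0],[\phi(T_1)],[\phi(T_2)],\dots$; equivalently $[\phi(T_m)]=[\Sym^m V]-[\Sym^{m-2}V]$ matches $L_m=J_m-J_{m-2}$ while $[\id_0]=J_0$. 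Hence the map sends a basis to a basis and is an isomorphism. I expect the main obstacle to be the fixed-point analysis at weight $0$: one must verify that it yields a \emph{single} simple object, rather than the two simples that a naive equivariantization of a fixed point would produce, since it is precisely this that guarantees surjectivity onto $K_0(\reph^{\mathfrak{S}_2})$ and is compatible with the bookkeeping $L_0=2=2[\id_0]$ implicit in the convention $T_0=2\,\id_0$.
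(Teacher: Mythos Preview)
Your argument is correct and takes a more abstract route than the paper. The paper works concretely inside the tensor powers $V^{\otimes k}$: it identifies the indecomposable objects of $\reph^{\mathfrak{S}_2}$ directly as the two-dimensional spans $\C\langle v_\epsilon, v_{-\epsilon}\rangle$, then uses the permutation action of $\mathfrak{S}_k$ (realized by crossings, hence available in $\reph^{\mathfrak{S}_2}$) to move any such span to one of the form $\C\langle v_{+^m-^n}, v_{-^m+^n}\rangle$ with $m\geq n$, and for $m>n$ further to $\phi(T_{m-n})$. Semi-simplicity is then read off from the vanishing of morphisms between distinct such objects together with the $1$-dimensionality of their endomorphism algebras, and the $K_0$-isomorphism is stated in the compressed form ``extremal weight spaces can be expressed as formal differences of classes of simple representations''. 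You instead argue globally: a Maschke averaging for semi-simplicity, an orbit classification of simples, and an explicit unitriangular change of basis $J_m \mapsto [\phi(T_m)] + [\phi(T_{m-2})] + \cdots$ on $K_0$. The paper's approach is more hands-on and ties directly to the diagrammatics; yours is conceptually cleaner and makes the structure of $K_0$ entirely explicit. Your caution about the fixed weight $0$ is well placed and is in fact a point that the paper treats only implicitly (its case distinction ``if $m>n$'' leaves $m=n$ to be matched with the monoidal unit, consistent with Proposition~\ref{prop:parprod2} and with your bookkeeping $L_0=2=2[\id_0]$).
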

\begin{proof}
The indecomposable objects in $\reph^{\mathfrak{S}_2}$ are of the form $\C\langle v_{\epsilon_1,\dots, \epsilon_{n+m}}, v_{-\epsilon_1,\dots, -\epsilon_{n+m}}\rangle$. Through the permutation action, such an object is isomorphic to an object of the form $\C\langle v_{+^m,-^n}, v_{-^m,+^n})\rangle$ with $m\geq n$ and then further to $\C\langle v_{+^{m-n}}, v_{-^{m-n}})\rangle = \phi(T_{m-n})$ if $m>n$. There are no morphisms between distinct such objects and their endomorphism algebras are $1$-dimensional over $\C$. This shows that $\reph^{\mathfrak{S}_2}$ is semi-simple. The isomorphism follows since extremal weight spaces can be expressed as formal differences of the classes of simple representations in $K_0(\rep)$.
\end{proof}

We aim to describe the subcategory $\reph^{\mathfrak{S}_2}$ of $\reph$ by a subcategory of $\essATL$.

\begin{definition}
Let $\sessATL$ denote the symmetric monoidal $\C$-linear subcategory of $\essATL$ with the same objects, but with morphisms spaces spanned by compositions of the cap-cups $U_i$ together with the wrap-around caps and cups:
\[  \begin{tikzpicture}[anchorbase, scale=.2]
    %% Inner boundary circle
    \draw (0,0) circle (1);
    %% Outer boundary circle
    \draw (0,0) circle (3);
    %% Curves
    \draw [very thick] (.8,.6) to [out=30, in=90] (2,0) to [out=-90,in=0] (0,-1.8) to [out=180,in=-90] (-2,0) to [out=90,in=150] (-.8,.6);
    \draw [very thick] (.6,.8) to (1.8,2.4);
    \draw [very thick] (-.6,.8) to (-1.8,2.4);
    % boundary markings
    \node at (0,-1) {$*$};
    \node at (0,2) {\small$\cdots$};
    \node at (0,-3) {$*$};
    \draw [dashed] (0,-1) to (0,-3);
  \end{tikzpicture}
\quad, \quad 
\begin{tikzpicture}[anchorbase, scale=.2]
    %% Inner boundary circle
    \draw (0,0) circle (1);
    %% Outer boundary circle
    \draw (0,0) circle (3);
    %% Curves
    \draw [very thick] (.6,.8) to (1.8,2.4);
    \draw [very thick] (-.6,.8) to (-1.8,2.4);
    \draw [very thick] (2.8,1) to [out=210, in=90] (2,0) to [out=-90,in=0] (0,-1.8) to [out=180,in=-90] (-2,0) to [out=90,in=330] (-2.8,1);
    % boundary markings
    \node at (0,-1) {$*$};
    \node at (0,-3) {$*$};
    \node at (0,2) {\small$\cdots$};
    \draw [dashed] (0,-1) to (0,-3);
  \end{tikzpicture}\]
\end{definition}
Note that the restriction of $\phi$ to the subcategory $\sessATL$ has image contained in $\reph^{\mathfrak{S}_2}$. 

\begin{remark} The endomorphism algebra $\sessATL(n,n)$ is isomorphic to the essential-circle quotient of the Temperley-Lieb quotient of the Hecke algebra of type $\widehat{A}_{n-1}$.
\end{remark}

\begin{lemma} $T_m$, $e_{m,n}$ and the isomorphisms between $e_{m,n}$ and $T_{|m-n|}$ are contained in $\sessATL$.
\end{lemma}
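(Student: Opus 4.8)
The plan is to unwind the explicit definitions from the previous sections and check, in each case, that the morphism in question is a $\C$-linear combination of diagrams built solely from the cap-cups $U_i$ and from caps and cups (which in the annulus may wrap around the core), so that no net copy of the rotation $\wrap$ survives; such diagrams are precisely the morphisms of $\sessATL$. I would organise the verification according to the three objects named in the statement.

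First I would show $T_m\in\sessATL$ by induction on $m$, using the recursion of Definition~\ref{def:T}. The base cases are immediate, since $T_1=\id_1$ and $T_2=\id_2+U_1/2+U_0/2$ are already written in the generators. For the inductive step, resolving the crossing via \eqref{eq:braid} gives $s_m=\id_{m+1}+U_m$, so $T_{m+1}=\iota(T_m)s_m\iota(T_m)=\iota(T_m)+\iota(T_m)U_m\iota(T_m)$. Superposing a non-wrapping through-strand (the functor $\iota$) carries generators of $\sessATL$ to generators, hence preserves $\sessATL$; thus $\iota(T_m)\in\sessATL$ by hypothesis, and since $U_m$ is a cap-cup both summands are compositions of elements of $\sessATL$, giving $T_{m+1}\in\sessATL$.

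Granting this, membership of $e_{m,n}$ is immediate from Lemma~\ref{lem:splitidem}: for $m+n\geq 3$ we have $e_{m,n}=-(T_m\otimes T_n)U_m(T_m\otimes T_n)$, a composition of $T_m\otimes T_n\in\sessATL$ (monoidality) with the cap-cup $U_m$; and $e_{1,1}=-U_1/2-\wrapi U_1\wrap/2=-U_1/2-U_0/2$, using the relation $\wrapi U_1\wrap=U_0$. The isomorphisms of Proposition~\ref{prop:parprod} (the case $m\neq n$) are equally direct, being $(T_m\otimes T_n)(T_{m-n}\otimes\Cu_n)$ and $(T_{m-n}\otimes\Ca_n)(T_m\otimes T_n)$: each is a composition of the $T$'s with the ordinary nested caps and cups $\Ca_n,\Cu_n$, all of which lie in $\sessATL$.

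The delicate point, and the one I expect to be the main obstacle, is the case $m=n$ of Proposition~\ref{prop:parprod2}, because the structure maps $f_2$ and $g_2$ visibly contain the rotation $\wrap^{\pm1}$, which does \emph{not} lie in $\sessATL$. Here the key observation is that $\wrap$ is absorbed into the adjacent cups and caps: the composite $\wrap(\Cu_1\otimes\Cu_{m-1})\colon 0\to 2m$ is a single crossingless diagram in which exactly the cup straddling the seam becomes a wrap-around cup while the other $m-1$ cups remain ordinary, so it is a product of cap-cup halves and hence lies in $\sessATL$; dually $(\Ca_1\otimes\Ca_{m-1})\wrap^{-1}$ is a wrap-around cap together with ordinary caps. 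Therefore $f_2=(-1)^m(T_m\otimes T_m)\big[\wrap(\Cu_1\otimes\Cu_{m-1})\big]$ and $g_2=\big[(\Ca_1\otimes\Ca_{m-1})\wrap^{-1}\big](T_m\otimes T_m)/2$ are compositions of elements of $\sessATL$, while $f_1,g_1$ involve only the $T$'s and the ordinary nested caps and cups $\Cu_m,\Ca_m$. This exhausts the structure maps, and the essential content is exactly that rotations enter only through the combinations $\wrap\,\Cu$ and $\Ca\,\wrap^{-1}$, which are by definition the wrap-around cups and caps of $\sessATL$.
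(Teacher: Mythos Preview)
Your treatment of $T_m$ and $e_{m,n}$ is fine, but the argument for the isomorphisms has a genuine gap: you assume that the ordinary nested caps and cups $\Ca_n$, $\Cu_n$ lie in $\sessATL$, and they do not. The generators of $\sessATL$ are the cap-cups $U_i$ (which are \emph{endomorphisms}, not separable into a cap and a cup) together with the wrap-around cap and cup; an isolated non-wrapping cup $0\to 2$ is not among them and cannot be produced from them. Concretely, $\phi(\Cu_1)(1)=v_{+-}-v_{-+}$ is sent to its negative by the Weyl involution $v_\epsilon\mapsto v_{-\epsilon}$, so $\Cu_1$ is not $\mathfrak{S}_2$-equivariant and hence not in $\sessATL$; more generally $\Cu_n$ and $\Ca_n$ pick up a sign $(-1)^n$ and so fail whenever $n$ is odd. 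Your phrase ``product of cap-cup halves'' reflects exactly this confusion: the halves of $U_i$ are not separately available in $\sessATL$.

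The paper handles this differently. It explicitly notes that the isomorphisms constructed in Proposition~\ref{prop:parprod} are \emph{not} in $\sessATL$, and then replaces them by new isomorphisms obtained by twisting with a suitable power of $\wrap$. The point is that if $f\colon e_{m,n}\to T_{|m-n|}$ and $g\colon T_{|m-n|}\to e_{m,n}$ are inverse isomorphisms, then so are $\wrap^{k}f$ and $g\wrap^{-k}$, because $\wrap^{\pm k}$ commutes with $T_{|m-n|}$ by Lemma~\ref{lem:technical}(5); choosing $k$ appropriately converts each ordinary cap or cup in the diagram into a wrap-around one, which \emph{is} a generator of $\sessATL$. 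This is exactly the mechanism you identify in your discussion of $f_2,g_2$, but it must be applied to all of the isomorphisms, not just those; and it modifies the maps rather than showing the original maps already lie in $\sessATL$.
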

\begin{proof}
This is clear for $T_m$ and $e_{m,n}$ from their definitions. The isomorphisms used in the proof of Proposition~\ref{prop:parprod} are not in $\sessATL$. However, they can be twisted by a suitable power of $\wrap$, which turns caps and cups into wrap-around caps and cups, which are in $\sessATL$. 
\end{proof}

\begin{proposition}
The functor $\phi \colon \sessATL \to \reph^{\mathfrak{S}_2}$  is fully faithful and induces an equivalence between $\Kar(\sessATL)$ and $\reph^{\mathfrak{S}_2}$. 
\end{proposition}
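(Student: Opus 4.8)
The plan is to reduce everything to Theorem~\ref{thm:faithfulness} and the semisimplicity of $\reph^{\mathfrak{S}_2}$ established above. Faithfulness of $\phi$ on $\sessATL$ is automatic, since $\sessATL$ is a subcategory of $\essATL$ and $\phi$ is faithful on all of $\essATL$. The containment $\phi(\sessATL)\subseteq\reph^{\mathfrak{S}_2}$ has already been noted: writing $\sigma$ for the generator of $\mathfrak{S}_2$ acting on each $V^{\otimes m}$ by $v_\pm\mapsto v_\mp$, the images of the generators $U_i$ and of the wrap-around cap and cup are $\sigma$-equivariant (the wrap-around cup sends $1\mapsto i(v_{+-}+v_{-+})$, which is $\sigma$-invariant, whereas a bare cup is $\sigma$-anti-invariant). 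Thus the real content is \emph{fullness}, after which the equivalence of Karoubi envelopes follows formally.

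For the Karoubi statement, I would first record that a semisimple category is idempotent complete, so $\Kar(\reph^{\mathfrak{S}_2})\simeq\reph^{\mathfrak{S}_2}$; a fully faithful $\phi$ then extends to a fully faithful functor $\Kar(\sessATL)\to\reph^{\mathfrak{S}_2}$, and only essential surjectivity remains. By semisimplicity every object is a sum of simples, and by the classification of indecomposables given above the simple objects are $\mathbf 1=\phi(\id_0)$, the sign object $\epsilon=\phi(-U_1/2)$, and $S_d=\phi(T_d)$ for $d\geq1$. Each of the idempotents $\id_0$, $-U_1/2$ and $T_d$ lies in $\sessATL$ (the latter two because $U_1$ is a generator and $T_d\in\sessATL$ by the preceding lemma), so every simple, and hence every object of $\reph^{\mathfrak{S}_2}$, is the image of an object of $\Kar(\sessATL)$.

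To prove fullness I would argue with matrix units. Decomposing $V^{\otimes m}$ in $\reph^{\mathfrak{S}_2}$ gives $\phi(V^{\otimes m})\cong\bigoplus_{d\geq1}S_d^{\oplus a_d}\oplus\mathbf 1^{\oplus a_0/2}\oplus\epsilon^{\oplus a_0/2}$, where $a_k=\dim(V^{\otimes m})_k$; a basis of $\Hom_{\reph^{\mathfrak{S}_2}}(V^{\otimes m},V^{\otimes n})$ is then given by matrix units factoring as an inclusion of an isotypic summand of $V^{\otimes n}$, an isomorphism onto the matching summand of $V^{\otimes m}$, and a projection. It therefore suffices to realise each such projection, inclusion and internal isomorphism inside $\sessATL$. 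The projections onto the $S_d$-summands are furnished by the idempotents $T_d$ together with the absorption and overlap properties (Lemma~\ref{lem:technical} and the product-formula lemmas), the projections onto the weight-zero summands $\mathbf 1,\epsilon$ by nested cap-cups built from $U_i$ and wrap-around caps/cups, and the isomorphisms permuting equal summands by the braidings $s_i=\id+U_i$, which lie in $\sessATL$ since $U_i$ does. The isomorphisms $e_{m,n}\cong T_{|m-n|}$ needed to match summands across different tensor powers were already shown to be expressible with wrap-around caps and cups, hence to live in $\sessATL$.

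I expect the main obstacle to be exactly this assembly of a \emph{complete} set of matrix units within $\sessATL$: one must be sure that no $\mathfrak{S}_2$-equivariant morphism is left out, i.e. that the projectors and isomorphisms above already span the whole equivariant Hom-space. A clean way to confirm this is a dimension count. Conjugation $f\mapsto\sigma f\sigma^{-1}$ is an involution of $\Hom_{\reph}(V^{\otimes m},V^{\otimes n})$ with $(+1)$-eigenspace $\Hom_{\reph^{\mathfrak{S}_2}}(V^{\otimes m},V^{\otimes n})$; since $\sigma$ interchanges the weight-$k$ and weight-$(-k)$ subspaces and acts freely on the weight-$0$ subspace, one computes $\dim\Hom_{\reph^{\mathfrak{S}_2}}=\tfrac12\dim\Hom_{\reph}=\tfrac12\dim\essATL(m,n)$, using Theorem~\ref{thm:faithfulness} for the last equality. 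Transporting $\sigma$-conjugation through $\phi$ yields an involution $\tau$ of $\essATL(m,n)$ reversing the winding of arcs, whose $(+1)$-eigenspace has this same dimension and contains $\sessATL(m,n)$; verifying that the symmetrised diagrams spanning this eigenspace can all be rewritten using only $U_i$ and wrap-around caps/cups gives the matching lower bound $\dim\sessATL(m,n)\geq\tfrac12\dim\essATL(m,n)$, and hence, together with faithfulness, fullness.
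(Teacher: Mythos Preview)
Your argument follows the same two-step template as the paper: faithfulness is inherited from $\essATL$, and fullness is deduced from the semisimplicity of the target together with the fact that projections onto the simple summands lie in the image of $\phi$. The paper compresses this into two sentences; your matrix-unit paragraph is a correct and more explicit unpacking of that one-liner, and your observation that a sign object $\epsilon=\phi(-U_1/2)$ must be accounted for alongside $\mathbf{1}=\phi(\id_0)$ is a valid refinement the paper does not make explicit.

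The dimension-count paragraph, however, does not furnish an independent confirmation. Everything up to ``$\dim\Hom_{\reph^{\mathfrak{S}_2}}=\tfrac12\dim\essATL(m,n)$'' is fine and is a useful sanity check on the target side, but the closing step---``verifying that the symmetrised diagrams spanning this eigenspace can all be rewritten using only $U_i$ and wrap-around caps/cups''---is precisely the content of fullness, so invoking it here is circular. The actual lower bound on $\dim\sessATL(m,n)$ comes from your matrix-unit construction: iterating the product-formula isomorphisms $T_a\otimes T_b\cong T_{a+b}\oplus T_{|a-b|}$ (whose constituent maps lie in $\sessATL$ by the preceding lemma) splits $\id_m$ in $\Kar(\sessATL)$ into copies of the $T_d$ and of $\id_0$, $-U_1/2$, and this already yields enough linearly independent morphisms. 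Present the dimension count only as computing the target dimension, and let the matrix-unit argument carry the weight.
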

\begin{proof} Faithfulness is inherited from Theorem~\ref{thm:faithfulness}. Fullness follows since the image of $\phi$ contains the projections onto the simple objects in $\reph^{\mathfrak{S}_2}$. 
\end{proof}

\begin{corollary} $\Kar(\sessATL)$ categorifies the polynomial ring $\Z[X]$ and its objects $T_m$ categorify the Chebyshev polynomials of the first kind.
\end{corollary}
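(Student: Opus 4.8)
The plan is to obtain the ring isomorphism by transporting the $K_0$-computations of Sections~\ref{sec:chebrec} and~\ref{sec:parprod} across the monoidal equivalence established in the preceding Proposition, and then to match the images of the classes $[T_m]$ with the Chebyshev polynomials $L_m$ by comparing recursions.

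First I would observe that, since $\phi$ is symmetric monoidal, the equivalence $\Kar(\sessATL)\simeq \reph^{\mathfrak{S}_2}$ is an equivalence of monoidal categories and hence induces an isomorphism of unital rings $K_0(\Kar(\sessATL))\cong K_0(\reph^{\mathfrak{S}_2})$. By the semisimplicity lemma above, the inclusion $\rep\hookrightarrow\reph^{\mathfrak{S}_2}$ induces a ring isomorphism $\Z[X]\cong K_0(\rep)\xrightarrow{\sim}K_0(\reph^{\mathfrak{S}_2})$. Composing the two gives a unital ring isomorphism $\psi\colon \Z[X]\xrightarrow{\sim}K_0(\Kar(\sessATL))$, which proves the first assertion.

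Next I would identify the classes $[T_m]$ with the $\psi$-images of the $L_m$. The composite isomorphism satisfies $\psi(1)=[\id_0]$ and $\psi(X)=[T_1]$, the latter because $T_1=(1,\id_1)$ corresponds to $V$ under the equivalence and $[V]=X$ in $K_0(\rep)$. With the convention $T_0=2\,\id_0$ this yields $\psi(L_0)=\psi(2)=[T_0]$ and $\psi(L_1)=[T_1]$. For the inductive step I would apply the ring homomorphism $\psi$ to the defining recurrence $L_{m+1}=XL_m-L_{m-1}$ and feed in the $K_0$-identities of Section~\ref{sec:chebrec}: for $m\geq 2$ commutativity and $[T_m][T_1]=[T_{m+1}]+[T_{m-1}]$ give $\psi(L_{m+1})=[T_1][T_m]-[T_{m-1}]=[T_{m+1}]$, while the base case uses $[T_1]^2=[T_2]+2[\id_0]$ to give $\psi(L_2)=[T_1]^2-2[\id_0]=[T_2]$. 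A straightforward induction then yields $\psi(L_m)=[T_m]$ for all $m\geq 0$, so the objects $T_m$ categorify the first-kind Chebyshev polynomials.

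Because every ingredient has already been proved, there is no genuine obstacle here; the only point requiring care is the bookkeeping at $m=1$, where the extra summand $2[\id_0]$ in $[T_1]^2=[T_2]+2[\id_0]$ encodes the anomalous initial value $L_0=2$ rather than a step of the recurrence. As a consistency check I would also transport the product formula of Section~\ref{sec:parprod}, namely $T_m\otimes T_n\cong T_{m+n}\oplus T_{|m-n|}$, through $\psi$: its image is exactly the multiplication rule $L_mL_n=L_{m+n}+L_{|m-n|}$ of~\eqref{eq:mult}, which together with $\psi(X)=[T_1]$ pins down the $[T_m]$ uniquely and agrees with the recursion above.
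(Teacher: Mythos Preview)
Your proposal is correct and is exactly the argument the paper has in mind: the Corollary is stated without proof, immediately after the Proposition giving the monoidal equivalence $\Kar(\sessATL)\simeq\reph^{\mathfrak{S}_2}$ and the Lemma computing $K_0(\reph^{\mathfrak{S}_2})\cong\Z[X]$, so the intended proof is precisely the composition of ring isomorphisms you wrote down together with the recursion from Section~\ref{sec:chebrec}. One small point worth making explicit: the $K_0$-identities $[T_m][T_1]=[T_{m+1}]+[T_{m-1}]$ were established in $\Kar(\essATL)$, so to use them in $\Kar(\sessATL)$ you need the underlying isomorphisms to live in $\sessATL$; this is supplied by the Lemma just before the Proposition (the $e_{m,1}\cong T_{m-1}$ isomorphisms can be twisted into $\sessATL$), or alternatively you can verify the identities directly in $\reph^{\mathfrak{S}_2}$ by decomposing $\phi(T_m)\otimes V$ into $\mathfrak{S}_2$-invariant weight spaces.
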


% -----------------------------------------------------------------------------
%
\subsection{Highest and lowest weight projectors}
%
% -----------------------------------------------------------------------------

Because of the desired topological application \cite{QW}, we have up to now focused on the idempotents that project onto the extremal weight space in a $\slnn{2}$-representation. Actually, by Theorem \ref{thm:faithfulness}, we can split $T_m$ in $\essATL$ into orthogonal idempotents that project onto the highest weight space and the lowest weight space separately. This is reminiscent of the splitting of the Chebyshev polynomials used in \cite{QRu}.

\begin{proposition}
  The following defines a family of idempotents in $\essATL$ that correspond under $\phi$ to projectors onto the highest and lowest weight spaces respectively:
  \begin{align*}
  T_{++}&=\frac{1}{4}\left(
  2\;
  \begin{tikzpicture}[anchorbase, scale=.25]
%% Inner boundary circle
\draw (0,0) circle (1);
%% Outer boundary circle
\draw (0,0) circle (3);
%% web edges
\draw [very thick] (.8,.6) to (2.4,1.8);
\draw [very thick] (-.8,.6) to (-2.4,1.8);
% boundary markings
\node at (0,-1) {$*$};
\node at (0,-3) {$*$};
\draw [dashed] (0,-1) to (0,-3);
\end{tikzpicture}
  %%% summand
  +2i\;
  \begin{tikzpicture}[anchorbase, scale=.25]
    %% Inner boundary circle
    \draw (0,0) circle (1);
    %% Outer boundary circle
    \draw (0,0) circle (3);
    %% web edges
    \draw [very thick] (-.8,.6) to [out=135,in=90] (-1.5,0) to [out=-90,in=180] (0,-1.6) to [out=0,in=-90] (1.7,0) to [out=90,in=-135] (2.4,1.8);
    \draw [very thick] (.8,.6) to [out=45,in=-45] (-2.4,1.8);
    % boundary markings
    \node at (0,-1) {$*$};
    \node at (0,-3) {$*$};
    \draw [dashed] (0,-1) to (0,-3);
  \end{tikzpicture}
  %%% second summand
+\;
 \begin{tikzpicture}[anchorbase, scale=.25]
%% Inner boundary circle
\draw (0,0) circle (1);
%% Outer boundary circle
\draw (0,0) circle (3);
%% web edges
\draw [very thick] (.8,.6) to [out=45, in=0] (0,1.5);
\draw [very thick] (-.8,.6) to [out=135,in=180] (0,1.5);
\draw [very thick] (0,2.25) to [out=0,in=225](2.4,1.8);
\draw [very thick] (0,2.25) to [out=180,in=315](-2.4,1.8);
% boundary markings
\node at (0,-1) {$*$};
\node at (0,-3) {$*$};
\draw [dashed] (0,-1) to (0,-3);
\end{tikzpicture}
+\;
 \begin{tikzpicture}[anchorbase, scale=.25]
%% Inner boundary circle
\draw (0,0) circle (1);
%% Outer boundary circle
\draw (0,0) circle (3);
%% web edges
\draw [very thick] (.8,.6) to [out=45, in=90] (1.5,0) to [out=270,in=0] (0,-1.5);
\draw [very thick] (-.8,.6) to [out=135,, in=90] (-1.5,0) to [out=270,in=180] (0,-1.5);
\draw [very thick] (0,-2.25) to [out=0,in=270] (2.25,0) to [out=90,in=225](2.4,1.8);
\draw [very thick] (0,-2.25) to [out=180,in=270] (-2.25,0) to [out=90,in=315](-2.4,1.8);
% boundary markings
\node at (0,-1) {$*$};
\node at (0,-3) {$*$};
\draw [dashed] (0,-1) to (0,-3);
 \end{tikzpicture}
  %%% second summand
+i\;
 \begin{tikzpicture}[anchorbase, scale=.25]
%% Inner boundary circle
\draw (0,0) circle (1);
%% Outer boundary circle
\draw (0,0) circle (3);
%% web edges
\draw [very thick] (.8,.6) to [out=45, in=0] (0,1.5);
\draw [very thick] (-.8,.6) to [out=135,in=180] (0,1.5);
\draw [very thick] (0,-2.25) to [out=0,in=270] (2.25,0) to [out=90,in=225](2.4,1.8);
\draw [very thick] (0,-2.25) to [out=180,in=270] (-2.25,0) to [out=90,in=315](-2.4,1.8);
% boundary markings
\node at (0,-1) {$*$};
\node at (0,-3) {$*$};
\draw [dashed] (0,-1) to (0,-3);
\end{tikzpicture}
+i\;
 \begin{tikzpicture}[anchorbase, scale=.25]
%% Inner boundary circle
\draw (0,0) circle (1);
%% Outer boundary circle
\draw (0,0) circle (3);
%% web edges
\draw [very thick] (.8,.6) to [out=45, in=90] (1.5,0) to [out=270,in=0] (0,-1.5);
\draw [very thick] (-.8,.6) to [out=135,, in=90] (-1.5,0) to [out=270,in=180] (0,-1.5);
\draw [very thick] (0,2.25) to [out=0,in=225](2.4,1.8);
\draw [very thick] (0,2.25) to [out=180,in=315](-2.4,1.8);
% boundary markings
\node at (0,-1) {$*$};
\node at (0,-3) {$*$};
\draw [dashed] (0,-1) to (0,-3);
 \end{tikzpicture}
 \right)
\\
 %%%%%%%%%%%%%%% NEGATIVE
 T_{--}&=\frac{1}{4}\left(
  2\;
  \begin{tikzpicture}[anchorbase, scale=.25]
%% Inner boundary circle
\draw (0,0) circle (1);
%% Outer boundary circle
\draw (0,0) circle (3);
%% web edges
\draw [very thick] (.8,.6) to (2.4,1.8);
\draw [very thick] (-.8,.6) to (-2.4,1.8);
% boundary markings
\node at (0,-1) {$*$};
\node at (0,-3) {$*$};
\draw [dashed] (0,-1) to (0,-3);
\end{tikzpicture}
  %%% summand
  -2i\;
  \begin{tikzpicture}[anchorbase, scale=.25]
    %% Inner boundary circle
    \draw (0,0) circle (1);
    %% Outer boundary circle
    \draw (0,0) circle (3);
    %% web edges
    \draw [very thick] (-.8,.6) to [out=135,in=90] (-1.5,0) to [out=-90,in=180] (0,-1.6) to [out=0,in=-90] (1.7,0) to [out=90,in=-135] (2.4,1.8);
    \draw [very thick] (.8,.6) to [out=45,in=-45] (-2.4,1.8);
    % boundary markings
    \node at (0,-1) {$*$};
    \node at (0,-3) {$*$};
    \draw [dashed] (0,-1) to (0,-3);
  \end{tikzpicture}
  %%% second summand
+\;
 \begin{tikzpicture}[anchorbase, scale=.25]
%% Inner boundary circle
\draw (0,0) circle (1);
%% Outer boundary circle
\draw (0,0) circle (3);
%% web edges
\draw [very thick] (.8,.6) to [out=45, in=0] (0,1.5);
\draw [very thick] (-.8,.6) to [out=135,in=180] (0,1.5);
\draw [very thick] (0,2.25) to [out=0,in=225](2.4,1.8);
\draw [very thick] (0,2.25) to [out=180,in=315](-2.4,1.8);
% boundary markings
\node at (0,-1) {$*$};
\node at (0,-3) {$*$};
\draw [dashed] (0,-1) to (0,-3);
\end{tikzpicture}
+\;
 \begin{tikzpicture}[anchorbase, scale=.25]
%% Inner boundary circle
\draw (0,0) circle (1);
%% Outer boundary circle
\draw (0,0) circle (3);
%% web edges
\draw [very thick] (.8,.6) to [out=45, in=90] (1.5,0) to [out=270,in=0] (0,-1.5);
\draw [very thick] (-.8,.6) to [out=135,, in=90] (-1.5,0) to [out=270,in=180] (0,-1.5);
\draw [very thick] (0,-2.25) to [out=0,in=270] (2.25,0) to [out=90,in=225](2.4,1.8);
\draw [very thick] (0,-2.25) to [out=180,in=270] (-2.25,0) to [out=90,in=315](-2.4,1.8);
% boundary markings
\node at (0,-1) {$*$};
\node at (0,-3) {$*$};
\draw [dashed] (0,-1) to (0,-3);
 \end{tikzpicture}
  %%% second summand
-i\;
 \begin{tikzpicture}[anchorbase, scale=.25]
%% Inner boundary circle
\draw (0,0) circle (1);
%% Outer boundary circle
\draw (0,0) circle (3);
%% web edges
\draw [very thick] (.8,.6) to [out=45, in=0] (0,1.5);
\draw [very thick] (-.8,.6) to [out=135,in=180] (0,1.5);
\draw [very thick] (0,-2.25) to [out=0,in=270] (2.25,0) to [out=90,in=225](2.4,1.8);
\draw [very thick] (0,-2.25) to [out=180,in=270] (-2.25,0) to [out=90,in=315](-2.4,1.8);
% boundary markings
\node at (0,-1) {$*$};
\node at (0,-3) {$*$};
\draw [dashed] (0,-1) to (0,-3);
\end{tikzpicture}
-i\;
 \begin{tikzpicture}[anchorbase, scale=.25]
%% Inner boundary circle
\draw (0,0) circle (1);
%% Outer boundary circle
\draw (0,0) circle (3);
%% web edges
\draw [very thick] (.8,.6) to [out=45, in=90] (1.5,0) to [out=270,in=0] (0,-1.5);
\draw [very thick] (-.8,.6) to [out=135,, in=90] (-1.5,0) to [out=270,in=180] (0,-1.5);
\draw [very thick] (0,2.25) to [out=0,in=225](2.4,1.8);
\draw [very thick] (0,2.25) to [out=180,in=315](-2.4,1.8);
% boundary markings
\node at (0,-1) {$*$};
\node at (0,-3) {$*$};
\draw [dashed] (0,-1) to (0,-3);
 \end{tikzpicture}
 \right)
   \end{align*}
  The projectors $T_{+^{m}}$ and $T_{-^{m}}$ for $m\geq 3$ are obtained via recursions of type~\eqref{eq:recursive} with base cases $T_{++}$ and $T_{--}$.
  \end{proposition}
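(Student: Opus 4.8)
The plan is to exploit the full faithfulness of $\diagrep$ (Theorem~\ref{thm:faithfulness}): every claimed identity among these morphisms in $\essATL$ --- idempotency, orthogonality, and the projection property --- need only be checked after applying $\diagrep$, where it becomes a finite computation in the four-dimensional space $V^{\otimes 2}$ (and, for the recursion, in $V^{\otimes m}$). First I would record the bookkeeping identity obtained by inspecting coefficients: the symmetric parts of $T_{++}$ and $T_{--}$ coincide, while the three parts weighted by $\pm i$ (the wrap term with coefficient $\pm 2i$ and the two mixed cap-cup terms with coefficient $\pm i$) have opposite signs. Hence $T_{++}+T_{--}=\tfrac14(4\,\id_2+2U_1+2U_0)=T_2$, the extremal weight projector of Definition~\ref{def:T}. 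Consequently it suffices to analyse $T_{++}$: once $\diagrep(T_{++})$ is identified with the projection onto the highest weight line $\C\la v_{++}\ra$, we get $\diagrep(T_{--})=\diagrep(T_2)-\diagrep(T_{++})$, which is the projection onto the lowest weight line $\C\la v_{--}\ra$ by Theorem~\ref{thm:extweightproj}.

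Next I would compute the $\diagrep$-images of the six diagrams in $T_{++}$: the identity, the wrap $\wrap$, the over-the-top cap-cup $U_1$, the around-the-back cap-cup $U_0$, and the two mixed diagrams $X$ (inner over-the-top cup, outer around-the-back cap) and $Y$ (inner around-the-back cap, outer over-the-top cup). The images of $\id_2$, $U_1$ and $U_0$ are already recorded in the proof of Theorem~\ref{thm:extweightproj}. For the wrap I use $\diagrep(\wrap)(v_\pm\otimes w)=i^{\pm1}\,w\otimes v_\pm$; for $X$ and $Y$ I use the evaluations extracted from the proof of Theorem~\ref{thm:faithfulness}, namely that an over-the-top cup sends $1\mapsto v_{+-}-v_{-+}$ while an around-the-back cup sends $1\mapsto i(v_{+-}+v_{-+})$, together with the dual statements for caps. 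Assembling the linear combination and testing on the basis $\{v_{++},v_{+-},v_{-+},v_{--}\}$, a direct check shows $\diagrep(T_{++})$ fixes $v_{++}$ and annihilates the other three vectors: on $v_{++}$ the identity and wrap terms reinforce to give $4v_{++}$ (so $\diagrep(T_{++})v_{++}=v_{++}$), on $v_{--}$ they cancel, and on the two weight-zero vectors the contributions of $U_1,U_0,X,Y$ cancel the $\id$ and wrap contributions coefficient-by-coefficient. Faithfulness of $\diagrep$ then upgrades $\diagrep(T_{++})^2=\diagrep(T_{++})$ and $\diagrep(T_{++})\diagrep(T_{--})=\diagrep(T_{--})\diagrep(T_{++})=0$ to the corresponding idempotency and orthogonality statements in $\essATL$.

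For $m\geq 3$ the argument is a direct analogue of the proof of Theorem~\ref{thm:extweightproj}. Defining $T_{+^{m+1}}=\iota(T_{+^m})\,s_m\,\iota(T_{+^m})$ (and similarly $T_{-^{m+1}}$) with base cases $T_{++},T_{--}$, I would show by induction that $\diagrep(T_{+^m})$ is the projection onto $\C\la v_{+\cdots+}\ra$: the inner $\iota(T_{+^m})$ annihilates $v_\epsilon$ unless $\epsilon_1=\cdots=\epsilon_m=+$, the crossing $s_m$ moves any terminal $-$ into the first $m$ strands so that the outer $\iota(T_{+^m})$ kills it, while $v_{+\cdots+}$ is fixed throughout, exactly as in \eqref{eq:recursive}; idempotency again descends from $\reph$ by faithfulness. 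The only genuinely delicate point in the whole argument is the accounting of the factors of $i$ contributed by the wrap and the around-the-back arcs in $\diagrep(\wrap)$, $\diagrep(X)$ and $\diagrep(Y)$: it is precisely the sign asymmetry of these factors on $v_{++}$ versus $v_{--}$ that separates the highest from the lowest weight projector, so I would fix the orientation convention of the wrap once and use it consistently across all three diagrams before combining them.
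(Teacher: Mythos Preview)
Your proposal is correct and follows precisely the approach indicated in the paper, whose proof consists of the single sentence ``The proof is again based on the application of the functor $\phi$.'' You have simply carried out that computation in detail, together with the helpful bookkeeping observation $T_{++}+T_{--}=T_2$ (which reduces the work to analysing $T_{++}$ alone) and the inductive step for $m\geq 3$ modelled on Theorem~\ref{thm:extweightproj}.
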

\begin{proof}
The proof is again based on the application of the functor $\phi$.
\end{proof}

% ==============================================================================
% REFERENCES

%
% ==============================================================================

% ==============================================================================
%
\end{document}